\newtheorem{theorem}{Theorem}[section]
\newtheorem{lemma}[theorem]{Lemma}
\newtheorem{proposition}[theorem]{Proposition}
\newcommand{\R}{\mathbb{R}}
\newcommand{\N}{\mathbb{N}}
\newcommand{\Z}{\mathbb{Z}}
\begin{document}

\title{A logistic equation with nonlocal interactions}

\author{Luis Caffarelli}
\address[Luis Caffarelli]{The University of Texas at Austin,
Department of Mathematics and
Institute for Computational Engineering and Sciences,
2515 Speedway, Austin, TX 78751, USA}
\email{caffarel@math.utexas.edu}

\author{Serena Dipierro}
\address[Serena Dipierro]{Otto-von-Guericke-Universit\"at Magdeburg,
Fakult\"at f\"ur Mathematik,
Institut f\"ur Analysis und Numerik,
Universit\"atsplatz 2, 39106 Magdeburg, Germany}
\email{serena.dipierro@ed.ac.uk}

\author{Enrico Valdinoci}
\address[Enrico Valdinoci]{Weierstra{\ss} Institut f\"ur Angewandte Analysis
und Stochastik, Mohrenstrasse 39, 10117 Berlin, Germany,
and Dipartimento di Matematica, Universit\`a degli studi di Milano,
Via Saldini 50, 20133 Milan, Italy, and
Istituto di Matematica Applicata e Tecnologie Informatiche,
Consiglio Nazionale delle Ricerche,
Via Ferrata 1, 27100 Pavia, Italy}
\email{enrico@mat.uniroma3.it}

\begin{abstract}
We consider here a logistic equation, 
modeling processes of nonlocal character both in the diffusion
and proliferation terms.

More precisely, for populations that propagate according to a 
L\'evy process and can reach resources in a neighborhood
of their position, we compare (and find explicit threshold
for survival) the local and nonlocal case.

As ambient space, we can consider:
\begin{itemize}
\item bounded domains,
\item periodic environments,
\item transition problems, where the environment consists
of a block of infinitesimal diffusion and an adjacent nonlocal one.
\end{itemize}
In each of these cases, we analyze
the existence/nonexistence of solutions in terms of the spectral properties
of the domain. In particular, we give a detailed description of the fact that
nonlocal populations may better adapt to sparse resources
and small environments.
\end{abstract}

\subjclass[2010]{35Q92, 46N60, 35R11, 60G22}
\keywords{Mathematical models for biology, local and nonlocal
dispersals, spectral analysis, existence of nontrivial solutions.}

\maketitle

\section{Introduction}

In this paper we study stationary solutions for a logistic
equation. The solution~$u$ can be interpreted, from the point of view
of mathematical biology, as the density of a population living in
some environment~$\Omega\subseteq\R^n$.

In the classical logistic equation
(see e.g.~\cite{Verhulst1845, PRE:8388973, pearl}), the population is supposed
to increase proportionally to the resource of the environment
(the growing effect being modeled by a nonnegative function~$\sigma$)
and to die when the resources get extinguished
(the dying effect being described by a nonnegative function~$\mu$).
The population is also assumed to diffuse randomly
(the random diffusion being modeled by the Laplace operator).
These considerations lead to a detailed study of the evolution equation
$$ \partial_t u = \Delta u +(\sigma-\mu u)\,u$$
and to the stationary case of equilibrium solution described
by the elliptic equation
$$ \Delta u +(\sigma-\mu u)\,u = 0.$$
\medskip

In this paper we will consider two 
variants of the
latter equation, motivated by the 
nonlocal features of the
population. \medskip

First of all, the diffusion operator of the population
is considered to be nonlocal, that is, we replace the
Gaussian diffusion by the one induced by L\'evy flights.
These types of nonlocal dispersal strategy have been observed
in nature and may be related to optimal hunting strategies
and adaptation to the environment stimulated by the natural
selection, see e.g.~\cite{Vis_al, Hum_al} for experimental results
and~\cite{VEN} for divulgative explanations of these phenomena in popular
magazines. {F}rom the mathematical point of view, taking into
account this kind of nonlocal diffusion translates in our setting
into the analysis of logistic equations driven by fractional Laplace operators.\medskip

Moreover, we take into account the possibility that also the
increasing rate of the species has a nonlocal character. This
feature is motivated in concrete cases by the fact that a population
takes advantage not only of the resources that are exactly in the
area in which they permanent settle, but also of the ones that
are ``at their reach''
(say, a ``giraffe's neck'' effect).
This nonlocal feature will be modeled for us by the convolution
with an integrable kernel (from the mathematical point of view,
we remark that the two types of nonlocal operators considered
are very different, since the fractional Laplacian causes
a loss of differentiability on the function, while
the convolution produces a regularizing effect).\medskip

The precise mathematical formulation that we consider is the following.
Given~$s\in(0,1)$, we consider the fractional Laplacian
\begin{equation}\label{NOA}
(-\Delta)^s u(x):= 2s\,(1-s)\,
PV\,\int_{\R^n}\frac{u(x)-u(y)}{|x-y|^{n+2s}}\,dy.\end{equation}
The notation ``$PV$'' denotes, as customary, the singular integral
taken in the ``principal value'' sense, that is
$$ PV\,\int_{\R^n}\frac{u(x)-u(y)}{|x-y|^{n+2s}}\,dy:=
\lim_{\delta\to0} \int_{\R^n\setminus B_\delta(x)}
\frac{u(x)-u(y)}{|x-y|^{n+2s}}\,dy.$$
The constant~$s\,(1-s)$ in~\eqref{NOA}
is just a normalizing factor, to allow ourselves
to consider the case~$s=1$ as a limit.
Indeed, with this choice,
$$ \lim_{s\to1}(-\Delta)^su(x) =c_\star \sum_{i=1}^n
\frac{\partial^2 u}{\partial x_i^s} (x)=:-\Delta u(x),$$
for a suitable normalizing constant~$c_\star>0$,
only depending on~$n$,
for any~$u\in C^2(\R^n)\cap L^\infty(\R^n)$.

The stationary logistic equation that we study is then
$$ -(-\Delta)^s u +(\sigma-\mu u)\,u +\tau(J*u)= 0,$$
where~$\sigma$, $\mu$ and~$J$ are nonnegative functions, $\tau\ge0$ is a constant and~$s\in(0,1]$. 
As usual, $J*u$ denotes the convolution between two functions, that is, for any~$x\in\R^n$,
$$ (J*u)(x):=\int_{\R^n} J(x-y)u(y)\, dy.$$
We also assume that the convolution kernel is even and normalized
with total mass~$1$, that is
\begin{equation}\label{unoint}
\int_{\R^n}J(x)\,dx=1 \end{equation}
and
\begin{equation}\label{symm0990}
J(-x)=J(x) \quad {\mbox{ for any~$x\in\R^n$.}} \end{equation}
We consider two types of setting for our equation:
the bounded domain with Dirichlet datum (corresponding
to a confined
environment with hostile surrounding areas) and the periodic case.
These two cases will be discussed in detail in the forthcoming subsections.
\medskip

For recent investigations
of different nonlocal equations arising
in biological contexts, see
e.g.~\cite{MR2601079, MR3082317, MR3103010, MR3274582, 2015arXiv150301629M}
and the references therein.

\subsection{Bounded domains with Dirichlet data}

The environment with hostile borders is modeled in our case
by the following equation:
\begin{equation}\label{EQ}
\left\{ \begin{matrix}(-\Delta)^{s} u = 
(\sigma - \mu u)\,u +\tau (J*u) &{\mbox{ in }}\Omega,
\\u=0 & {\mbox{ outside }}\Omega,\\
u\ge0&{\mbox{ in }}\R^n.\end{matrix}\right.
\end{equation}
We will present an existence theory for nontrivial solutions
and we will compare local and nonlocal behaviors of the population,
analyzing their effectiveness in terms of the resource and of the domain.

In further detail, 
we consider the (possibly fractional) critical Sobolev 
exponent~$2^*_s:=2n/(n-2s)$ and we state
a general existence result as follows:

\begin{theorem}\label{EX56AJJ}
Let~$\Omega$ be a bounded Lipschitz domain.
Assume that~$\sigma\in L^m(\Omega)$,
for some~$m\in (2^*_s/(2^*_s-2),+\infty]$,
and that~$(\sigma+\tau)^3\mu^{-2}\in L^1(\Omega)$. 
Then, there exists a solution of~\eqref{EQ}.
\end{theorem}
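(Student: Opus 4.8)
The plan is to construct the solution variationally, as a minimizer of the energy naturally attached to~\eqref{EQ}. Let $\mathcal X$ be the space of functions $u\in H^s(\R^n)$ with $u=0$ a.e.\ in $\R^n\setminus\Omega$ (so $\mathcal X=H^1_0(\Omega)$ when $s=1$), which, by the Poincar\'e inequality on the bounded set~$\Omega$, is a Hilbert space under the Gagliardo seminorm $[u]_s^2:=\iint_{\R^{2n}}|u(x)-u(y)|^2|x-y|^{-n-2s}\,dx\,dy$; since $\Omega$ is Lipschitz, $\mathcal X$ embeds continuously into $L^q(\Omega)$ for $q\in[1,2^*_s]$ and compactly for $q\in[1,2^*_s)$. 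On $\mathcal X$ I would consider
$$
\mathcal E(u):=\frac{s(1-s)}{2}\,[u]_s^2+\frac13\int_\Omega\mu\,|u|^3\,dx-\frac12\int_\Omega\sigma\,u^2\,dx-\frac{\tau}{2}\int_\Omega(J*u)\,u\,dx,
$$
with $\frac{s(1-s)}{2}[u]_s^2$ read as $\frac{c_\star}{2}\int_\Omega|\nabla u|^2\,dx$ when $s=1$; its Euler--Lagrange equation on nonnegative functions is exactly the first line of~\eqref{EQ}. First I would record that $\mathcal E$ is well defined with values in $(-\infty,+\infty]$: by H\"older and Sobolev, $\int_\Omega\sigma u^2\le\|\sigma\|_{L^m(\Omega)}\|u\|_{L^{2m'}(\Omega)}^2\le C\,\|\sigma\|_{L^m(\Omega)}[u]_s^2$, where the requirement $2m'\le2^*_s$ is precisely $m\ge 2^*_s/(2^*_s-2)$, while Young's convolution inequality and~\eqref{unoint} give $\Big|\int_\Omega(J*u)\,u\Big|\le\|u\|_{L^2(\Omega)}^2$; thus the only possibly infinite term is the (nonnegative) cubic one, and $\mathcal E$ is $C^1$ on the set where it is finite, which is nonempty since $\mathcal E(0)=0$.

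The crux is coercivity and boundedness below. Estimating the two indefinite terms together, $-\frac12\int_\Omega\sigma u^2-\frac{\tau}{2}\int_\Omega(J*u)u\ge-\frac12\int_\Omega(\sigma+\tau)u^2$, and writing $(\sigma+\tau)u^2=\big[(\sigma+\tau)\mu^{-2/3}\big]\big[\mu^{2/3}u^2\big]$, H\"older with exponents $3$ and $3/2$ yields
$$
\int_\Omega(\sigma+\tau)u^2\le\Big(\int_\Omega(\sigma+\tau)^3\mu^{-2}\Big)^{1/3}\Big(\int_\Omega\mu\,|u|^3\Big)^{2/3},
$$
where the first factor is \emph{finite by hypothesis}. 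A weighted Young inequality then bounds the right-hand side by $\varepsilon\int_\Omega\mu|u|^3+C_\varepsilon\int_\Omega(\sigma+\tau)^3\mu^{-2}$, and choosing $\varepsilon$ small we obtain $\mathcal E(u)\ge\frac{s(1-s)}{2}[u]_s^2+\frac16\int_\Omega\mu|u|^3-C\ge\frac{s(1-s)}{2}[u]_s^2-C$. Hence $\mathcal E$ is bounded below and coercive on $\mathcal X$. This trade-off of the cubic term against the indefinite quadratic ones, available \emph{exactly} because $(\sigma+\tau)^3\mu^{-2}\in L^1(\Omega)$, is the step I expect to be the main obstacle; everything else is fairly standard once it is in place.

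Next I would run the direct method. Take a minimizing sequence $(u_k)$; by coercivity it is bounded in $\mathcal X$, so up to a subsequence $u_k\rightharpoonup u$ in $\mathcal X$, $u_k\to u$ a.e.\ and strongly in $L^q(\Omega)$ for every $q<2^*_s$. Then $[\,\cdot\,]_s^2$ is weakly lower semicontinuous by convexity; $\int_\Omega\mu|u_k|^3$ passes to the limit from below by Fatou's lemma, the integrand being nonnegative and converging a.e.; $\int_\Omega\sigma u_k^2\to\int_\Omega\sigma u^2$ because $u_k\to u$ in $L^{2m'}(\Omega)$ --- here the \emph{strict} inequality $m>2^*_s/(2^*_s-2)$, i.e.\ $2m'<2^*_s$, is what gives compactness --- and $\sigma\in L^m(\Omega)$; and $\int_\Omega(J*u_k)u_k\to\int_\Omega(J*u)u$ since $u_k\to u$ in $L^2(\Omega)$ and convolution with $J$ is continuous on $L^2$ by Young. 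Therefore $\mathcal E(u)\le\liminf_k\mathcal E(u_k)=\inf_{\mathcal X}\mathcal E$, so $u$ is a minimizer.

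Finally I would pass to a nonnegative minimizer: since $[\,|u|\,]_s\le[u]_s$, the $\mu$- and $\sigma$-terms are unchanged under $u\mapsto|u|$, and $\int_\Omega(J*|u|)|u|\ge\int_\Omega(J*u)u$ because $J\ge0$ and $|u(x)|\,|u(y)|\ge u(x)u(y)$, we get $\mathcal E(|u|)\le\mathcal E(u)$, so $|u|$ is also a minimizer and we may take $u\ge0$. As a minimizer, $u$ solves $(-\Delta)^su=\sigma u-\mu|u|u+\tau(J*u)$ in $\Omega$ with $u=0$ outside $\Omega$; since $u\ge0$ we have $|u|u=u^2$, which is precisely~\eqref{EQ}. (Note that $u\equiv0$ always solves~\eqref{EQ}; the value of the variational construction is that it furnishes a solution that is a genuine energy minimizer, which under the additional spectral assumptions of the later sections can be shown to be nontrivial.)
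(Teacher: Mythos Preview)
Your proposal is correct and follows essentially the same route as the paper: a direct minimization of the natural energy~$\mathcal E$, with the key lower bound coming from the same H\"older/Young trade-off between the cubic term and the $(\sigma+\tau)u^2$ term (the paper carries it out pointwise via $\frac{(\sigma+\tau)u^2}{2}\le\frac{\mu|u|^3}{3}+\frac{(\sigma+\tau)^3}{6\mu^2}$, which is the same inequality you obtain in integrated form), followed by compactness in $L^{2m'}(\Omega)$ with $2m'<2^*_s$, Fatou on the nonnegative terms, and passage to $|u|$. The only cosmetic differences are your phrasing of the admissible space as $H^s$ with Poincar\'e (equivalent to the paper's ``finite Gagliardo seminorm and $u\in L^p$'') and your two-step H\"older--then--weighted--Young in place of the paper's one-step pointwise Young; neither changes the substance.
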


To study the solutions obtained by Theorem~\ref{EX56AJJ}
it is useful to compare them to the domain using a spectral analysis.
For this, we denote by~$\lambda_s(\Omega)$
the first Dirichlet eigenvalue for~$(-\Delta)^s$ in~$\Omega$, i.e.
$$ \lambda_s(\Omega):= \inf
s\,(1-s)\iint_{Q_\Omega}\frac{|u(x)-u(y)|^2}{|x-y|^{n+2s}}\,dx\,dy,$$
where
\begin{equation}\label{4bis}
Q_\Omega:= (\Omega\times\R^n)\cup((\R^n\setminus\Omega)\times\Omega)
\end{equation} 
and the infimum is taken under the conditions
that~$\| u\|_{L^2(\R^n)}=1$
and~$u=0$ outside~$\Omega$, 
if~$s\in(0,1)$, and, as classical,
$$ \lambda_1(\Omega):= c_\star \inf_{{\| u\|_{L^2(\R^n)}=1}\atop{u\in H^1_0(\Omega)}}
\int_\Omega|\nabla u|^2\,dx.$$
For a detailed study of these eigenvalues
(also in the nonlocal case) see for instance
Appendix~A in~\cite{MR3002745}.

The existence of nontrivial solutions to~\eqref{EQ} can be 
characterized in terms of these first eigenvalues:
roughly speaking, when the resource~$\sigma$ is too small,
the only solution of~\eqref{EQ} is the one identically zero,
i.e. all the population dies; viceversa, if the resource~$\sigma$
is large enough, there exists a positive solution. 

More precisely, we have the following:

\begin{theorem}\label{COP}
Let~$\Omega$ be a bounded Lipschitz domain.
Assume that~$\sigma\in L^m(\Omega)$,
for some~$m\in (2^*_s/(2^*_s-2),+\infty]$,
and that~$(\sigma+\tau)^3\mu^{-2}\in L^1(\Omega)$.
Then:
\begin{itemize}
\item if $\sup_{\Omega}\sigma+\tau\le\lambda_s(\Omega)$ 
then the only solution of~\eqref{EQ}
is the one identically zero;
\item if
$\inf_{\Omega}\sigma\ge\lambda_s(\Omega)$ with strict inequality on a set of positive
measure
and~$\mu\in L^1(\Omega)$,
then~\eqref{EQ}
possesses a solution~$u$ such that~$u>0$ in~$\Omega$.
\end{itemize}
\end{theorem}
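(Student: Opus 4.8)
The plan is to prove the two dichotomy cases of Theorem~\ref{COP} separately, using the variational structure of~\eqref{EQ} and the spectral characterization of~$\lambda_s(\Omega)$.

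\medskip

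\emph{Nonexistence when~$\sup_\Omega\sigma+\tau\le\lambda_s(\Omega)$.}
Suppose~$u$ is a solution of~\eqref{EQ}. I would test the equation against~$u$ itself (which is admissible in the energy space since~$u=0$ outside~$\Omega$, and~$u\ge0$), obtaining
$$ s(1-s)\iint_{Q_\Omega}\frac{|u(x)-u(y)|^2}{|x-y|^{n+2s}}\,dx\,dy
= \int_\Omega(\sigma-\mu u)u^2\,dx + \tau\int_\Omega (J*u)\,u\,dx.$$
By the variational definition of~$\lambda_s(\Omega)$, the left side is bounded below by~$\lambda_s(\Omega)\|u\|_{L^2(\R^n)}^2=\lambda_s(\Omega)\int_\Omega u^2\,dx$. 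For the right side, I would use~$\mu\ge0$ to drop the~$-\mu u^3$ term, bound~$\int_\Omega\sigma u^2\le(\sup_\Omega\sigma)\int_\Omega u^2$, and estimate the convolution term by~$\tau\int_\Omega(J*u)u\,dx\le\tau\int_\Omega u^2\,dx$. This last bound is the one subtlety: since~$u\ge0$ and~$u$ is supported in~$\Omega$, by Young's inequality (using~\eqref{unoint}) one has~$\|J*u\|_{L^2(\R^n)}\le\|J\|_{L^1}\|u\|_{L^2}=\|u\|_{L^2(\Omega)}$, so Cauchy--Schwarz gives~$\int_\Omega(J*u)u\le\|J*u\|_{L^2(\Omega)}\|u\|_{L^2(\Omega)}\le\|u\|_{L^2(\Omega)}^2$. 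Combining, $\lambda_s(\Omega)\int_\Omega u^2\le(\sup_\Omega\sigma+\tau)\int_\Omega u^2\le\lambda_s(\Omega)\int_\Omega u^2$, forcing equality throughout; in particular~$\int_\Omega\mu u^3=0$ and the Rayleigh quotient of~$u$ equals~$\lambda_s(\Omega)$, so~$u$ is (a multiple of) a first eigenfunction. If~$u\not\equiv0$, the eigenfunction is strictly positive in~$\Omega$, hence~$\int_\Omega\mu u^3>0$ unless~$\mu\equiv0$ a.e.; the strict inequality analysis (or a direct argument that the convolution inequality is strict when~$u>0$ has part of its mass escape~$\Omega$, unless~$J$ is supported near~$0$) then yields a contradiction, so~$u\equiv0$.

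\medskip

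\emph{Existence of a positive solution when~$\inf_\Omega\sigma\ge\lambda_s(\Omega)$.}
Here I would revisit the solution~$u$ produced by Theorem~\ref{EX56AJJ} (or re-run its variational construction, minimizing the associated energy functional), and show it is not identically zero. The natural device is to evaluate the energy functional, whose critical points solve~\eqref{EQ}, on the rescaled first eigenfunction~$\varepsilon\varphi_1$, where~$\varphi_1\ge0$ is normalized in~$L^2$; using~$\inf_\Omega\sigma\ge\lambda_s(\Omega)$ with strict inequality on a positive-measure set and~$\tau(J*\varphi_1)\varphi_1\ge0$, the quadratic part of the energy is strictly negative for small~$\varepsilon$, while the quartic term~$\frac14\int\mu(\varepsilon\varphi_1)^4$ is higher order (finite because~$\mu\in L^1(\Omega)$ and~$\varphi_1\in L^\infty$). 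Hence the minimal energy is strictly negative, so the minimizer~$u$ is nontrivial. Nonnegativity comes from the construction (replacing~$u$ by~$|u|$ does not increase the Gagliardo seminorm and does not increase energy since~$J\ge0$, $\sigma\ge0$), and strict positivity~$u>0$ in~$\Omega$ follows from the strong maximum principle for~$(-\Delta)^s$: $u$ satisfies~$(-\Delta)^s u = cu + \tau(J*u)\ge0$ in the viscosity/weak sense with~$c=\sigma-\mu u\in L^\infty$ plus the nonnegative source~$\tau(J*u)\ge0$, and~$u\ge0$, $u\not\equiv0$, so~$u>0$ in~$\Omega$ (for~$s=1$ this is the classical Hopf-type argument).

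\medskip

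The main obstacle I anticipate is the borderline case in the nonexistence part: when~$\sup_\Omega\sigma+\tau=\lambda_s(\Omega)$ exactly, one must rule out a first eigenfunction being a genuine solution, which requires carefully tracking the two places where inequalities were used — the spectral inequality and the Young/Cauchy--Schwarz bound on the convolution term — and showing at least one is strict for any nonzero~$u$. The convolution bound is strict whenever~$u>0$ somewhere in~$\Omega$ and~$J$ has mass spreading outside a null set relative to the support, so the argument must either invoke a qualitative hypothesis on~$J$ or, more robustly, exploit that a first eigenfunction of~$(-\Delta)^s$ cannot simultaneously saturate both the Rayleigh quotient and have~$\int_\Omega\mu u^3=0$ unless~$\mu\equiv0$, in which case the pure eigenvalue equation~$(-\Delta)^s u=(\sup_\Omega\sigma)u+\tau(J*u)$ is incompatible with~$u$ being the first eigenfunction unless~$\sigma\equiv\sup_\Omega\sigma$ and the convolution bound is an equality — a rigidity one then contradicts. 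The rest is routine, the fractional-Sobolev embeddings and the Young inequality for convolution being the only analytic tools needed, all compatible with the integrability assumptions~$\sigma\in L^m$ and~$(\sigma+\tau)^3\mu^{-2}\in L^1(\Omega)$ that make the energy functional well-defined and coercive.
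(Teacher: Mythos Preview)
Your overall strategy matches the paper's proof exactly: test the equation against~$u$ and compare with the Rayleigh quotient for the first part, and show the energy of~$\epsilon\varphi_1$ is negative for the second. However, your handling of the borderline case in the nonexistence part has a genuine gap. You correctly arrive at the conclusion that a nontrivial solution would force~$\int_\Omega\mu u^3=0$, hence~$\mu\equiv0$ a.e.\ (since~$u>0$ in~$\Omega$), but then you try to close the argument via a rigidity analysis of when the convolution inequality~$\int_\Omega(J*u)u\le\|u\|_{L^2}^2$ is strict. That route is fragile: strictness there depends on the support of~$J$ relative to~$\Omega$, which is not part of the hypotheses, and your sketch does not actually establish it.

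The paper avoids this entirely with a one-line observation you overlooked: the standing hypothesis~$(\sigma+\tau)^3\mu^{-2}\in L^1(\Omega)$ forces~$\mu\not\equiv0$. Indeed, if~$\mu\equiv0$ then integrability requires~$\sigma+\tau\equiv0$, so~$\sigma\equiv0$ and~$\tau=0$, whence the equation reduces to~$(-\Delta)^s u=0$ in~$\Omega$ with~$u=0$ outside, giving~$u\equiv0$. With~$\mu\not\equiv0$ and~$u>0$, the term~$\int_\Omega\mu u^3$ is strictly positive, so the chain of inequalities is strict \emph{before} you ever reach the convolution estimate, and the contradiction~$\lambda_s(\Omega)\|u\|^2<\lambda_s(\Omega)\|u\|^2$ follows immediately. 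Separately, a minor slip: the potential term in the energy is cubic, $\frac{\mu|u|^3}{3}$, not quartic, since the nonlinearity in~\eqref{EQ} is~$\mu u^2$; this does not affect your argument, which only needs the term to be~$o(\epsilon^2)$.
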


A consequence of Theorem~\ref{COP} is that
nonlocal species can better adapt to sparse resources.
For instance, there exist examples of disjoint domains~$\Omega_1$
and~$\Omega_2$ such that the resource in each single~$\Omega_i$
is not sufficient for the species to survive, but the combined
resources in the union of the domains can be used by a nonlocal
population efficiently enough. A formal statement goes as follows:

\begin{theorem}\label{CONG}
Let~$s\in(0,1)$.
Let~$\Omega_1$ be a domain in~$\R^n$, and~$\Omega_2$ be
a domain congruent to~$\Omega_1$, with~$\overline{\Omega_1}\cap\overline{\Omega_2}=\varnothing$.
Then, there exists~$\sigma\in(0,+\infty)$ such that
the only solution of
$$ \left\{ \begin{matrix}(-\Delta)^{s} u =
(\sigma - \mu u)\,u &{\mbox{ in }}\Omega_i,
\\u=0 & {\mbox{ outside }}\Omega_i,\\
u\ge0&{\mbox{ in }}\R^n\end{matrix}\right. $$
is the trivial one, for any~$i\in\{1,2\}$,
but the equation
$$ \left\{ \begin{matrix}(-\Delta)^{s} u =
(\sigma - \mu u)\,u &{\mbox{ in }}\Omega_1\cup\Omega_2,
\\u=0 & {\mbox{ outside }}\Omega_1\cup\Omega_2,\\
u\ge0&{\mbox{ in }}\R^n\end{matrix}\right. $$
admits a positive solution in~$\Omega_1\cup\Omega_2$.
\end{theorem}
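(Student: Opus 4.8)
The plan is to reduce the statement to one strict spectral inequality, namely
$$\lambda_s(\Omega_1\cup\Omega_2)\ <\ \lambda_s(\Omega_1)\ =\ \lambda_s(\Omega_2),$$
and then feed it into Theorem~\ref{COP} with $\tau=0$ and with $\sigma$ a constant chosen between the two sides. The equality $\lambda_s(\Omega_1)=\lambda_s(\Omega_2)$ is immediate, since both $(-\Delta)^s$ and the Gagliardo form defining $\lambda_s$ are invariant under the rigid motion carrying $\Omega_1$ onto $\Omega_2$. The inequality $\lambda_s(\Omega_1\cup\Omega_2)\le\lambda_s(\Omega_1)$ is also clear, because any function supported in $\Omega_1$ is a competitor in the variational characterization of $\lambda_s(\Omega_1\cup\Omega_2)$; the real point is that, unlike in the local case, this inequality is \emph{strict}. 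I will work under the assumption (implicit in the ambient setting, and what makes Theorem~\ref{COP} applicable and the first eigenvalue attained) that $\Omega_1$, hence $\Omega_2$, is bounded and Lipschitz.

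To prove strictness I would take $u_1$ a nonnegative first Dirichlet eigenfunction of $(-\Delta)^s$ in $\Omega_1$ with $\|u_1\|_{L^2(\R^n)}=1$, and $u_2$ its image under the rigid motion sending $\Omega_1$ to $\Omega_2$, so that $u_2\ge0$, $\mathrm{supp}\,u_2\subseteq\overline{\Omega_2}$, $\|u_2\|_{L^2(\R^n)}=1$, and $\mathcal E(u_1)=\mathcal E(u_2)=\lambda_s(\Omega_1)$, where $\mathcal E(u):=s(1-s)\iint_{\R^n\times\R^n}|u(x)-u(y)|^2|x-y|^{-n-2s}\,dx\,dy$ (for a function supported in a fixed domain this full-space energy equals the one over $Q_\Omega$, the complementary region contributing zero). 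Testing $\lambda_s(\Omega_1\cup\Omega_2)$ against $v:=2^{-1/2}(u_1+u_2)$, which satisfies $\|v\|_{L^2(\R^n)}=1$ since $u_1,u_2$ have disjoint supports, and expanding $|v(x)-v(y)|^2$, one obtains
$$\lambda_s(\Omega_1\cup\Omega_2)\ \le\ \mathcal E(v)\ =\ \tfrac12\mathcal E(u_1)+\tfrac12\mathcal E(u_2)+\mathcal C\ =\ \lambda_s(\Omega_1)+\mathcal C,$$
with cross term $\mathcal C=s(1-s)\iint (u_1(x)-u_1(y))(u_2(x)-u_2(y))|x-y|^{-n-2s}\,dx\,dy$. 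Since $\mathrm{supp}\,u_1$ and $\mathrm{supp}\,u_2$ are disjoint, the terms $u_1(x)u_2(x)$ and $u_1(y)u_2(y)$ vanish identically in the product, and the symmetry of the kernel gives $\mathcal C=-2s(1-s)\iint u_1(x)u_2(y)|x-y|^{-n-2s}\,dx\,dy$. Because $\overline{\Omega_1}\cap\overline{\Omega_2}=\varnothing$ and both sets are compact, they lie at positive distance, so this integral is finite, and since $u_1\not\equiv0$ on $\Omega_1$, $u_2\not\equiv0$ on $\Omega_2$ and the kernel is positive, it is strictly positive; hence $\mathcal C<0$ and the strict inequality follows.

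With the spectral inequality in hand, I would fix a constant $\sigma$ with $\lambda_s(\Omega_1\cup\Omega_2)<\sigma\le\lambda_s(\Omega_1)=\lambda_s(\Omega_2)$, and choose $\mu$ (for instance a positive constant) so that the integrability hypotheses of Theorem~\ref{COP} hold — on each bounded domain involved one then has $\sigma\in L^\infty$, $\mu\in L^1$ and $(\sigma+\tau)^3\mu^{-2}\in L^1$. Applying the first alternative of Theorem~\ref{COP} on each $\Omega_i$, where $\sup_{\Omega_i}\sigma+\tau=\sigma\le\lambda_s(\Omega_i)$, shows that the only solution there is the trivial one; applying the second alternative on $\Omega_1\cup\Omega_2$, where $\inf_{\Omega_1\cup\Omega_2}\sigma=\sigma>\lambda_s(\Omega_1\cup\Omega_2)$ with strict inequality everywhere (hence on a set of positive measure), produces a solution positive in $\Omega_1\cup\Omega_2$.

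The main obstacle is precisely the sign and finiteness of the cross term $\mathcal C$, i.e. the strict spectral inequality: it hinges on the nonnegative eigenfunctions not vanishing identically on their domains together with the positivity of the interaction kernel, and on the two domains being at positive mutual distance so that this kernel stays bounded on $\Omega_1\times\Omega_2$. The remaining points are routine: that $v$ lies in the function space underlying $\lambda_s(\Omega_1\cup\Omega_2)$, that the energy over $Q_{\Omega_1\cup\Omega_2}$ coincides with the full-space Gagliardo energy for functions supported in $\Omega_1\cup\Omega_2$, and that the chosen $\sigma$ and $\mu$ meet the hypotheses of Theorem~\ref{COP}.
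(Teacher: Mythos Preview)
Your proposal is correct and follows essentially the same approach as the paper: both reduce the claim to the strict spectral inequality $\lambda_s(\Omega_1\cup\Omega_2)<\lambda_s(\Omega_1)=\lambda_s(\Omega_2)$, prove it by testing the Rayleigh quotient with the sum of the normalized first eigenfunctions and showing the nonlocal cross term is strictly negative, and then invoke Theorem~\ref{COP} with a constant $\sigma$ in the resulting gap. The only cosmetic differences are that the paper takes $e_1+e_2$ (with $\|e_1+e_2\|_{L^2}^2=2$) rather than your normalized $2^{-1/2}(u_1+u_2)$, and chooses $\sigma$ in the open interval rather than allowing equality at the upper endpoint; your version is equally valid.
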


Also, in light of Theorem~\ref{COP}
it is interesting to determine for which~$s$
positive solutions of~\eqref{EQ} may occur. 
Roughly speaking,
when~$\Omega$ is ``small'', 
the strongly diffusive species corresponding to
small values of~$s$ may be favored.
Viceversa, when~$\Omega$ is ``large'', the
species corresponding to small~$s$ may be annihilated.
As a prototype example we present the following two results:

\begin{proposition}\label{NONAM}
Let~$\Omega$ be a bounded Lipschitz domain and set
$$ \Omega_r := \{ rx, \; x\in\Omega\}.$$ 

Then the equation
$$ \left\{ \begin{matrix}(-\Delta)^{s} u = 
(1- u)\,u +\tau\,(J*u) &{\mbox{ in }}\Omega_r,
\\u=0 & {\mbox{ outside }}\Omega_r,\\
u\ge0&{\mbox{ in }}\R^n\end{matrix}\right.
$$
admits a nontrivial solution if and only if
$$ r > \left( \lambda_s(\Omega) \right)^{\frac{1}{2s}}.$$
\end{proposition}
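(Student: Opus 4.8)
The plan is to reduce both directions to Theorem~\ref{COP}, the only genuinely new ingredient being the scaling law of the first Dirichlet eigenvalue under dilations. We are in the situation of~\eqref{EQ} with $\sigma\equiv1$ and $\mu\equiv1$, and since $\Omega_r$ is a bounded Lipschitz domain the structural hypotheses of Theorems~\ref{EX56AJJ} and~\ref{COP} hold automatically: $\sigma\in L^\infty(\Omega_r)\subseteq L^m(\Omega_r)$ for every admissible $m$, $\mu\equiv1\in L^1(\Omega_r)$, and $(\sigma+\tau)^3\mu^{-2}\equiv(1+\tau)^3\in L^1(\Omega_r)$. Hence it suffices to compare $\lambda_s(\Omega_r)$ with $\sup_{\Omega_r}\sigma+\tau=1+\tau$ and with $\inf_{\Omega_r}\sigma=1$.

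The key step is the identity
$$\lambda_s(\Omega_r)=r^{-2s}\,\lambda_s(\Omega).$$
To obtain it, take $w$ admissible in the Rayleigh quotient defining $\lambda_s(\Omega_r)$ (that is, $w=0$ outside $\Omega_r$ and $\|w\|_{L^2(\R^n)}=1$) and set $u(x):=r^{n/2}w(rx)$, which is admissible for $\Omega$. Substituting $y=rx$, $y'=rx'$ in the double integral over $Q_\Omega$ from~\eqref{4bis}, the two volume elements each produce a factor $r^{-n}$ while the kernel $|x-x'|^{-n-2s}$ produces $r^{\,n+2s}$, so the Gagliardo seminorm of $u$ equals $r^{2s}$ times that of $w$, whereas the $L^2$ norms agree; taking infima over matching classes gives $\lambda_s(\Omega)=r^{2s}\lambda_s(\Omega_r)$ (for $s=1$ the same computation with $\int|\nabla u|^2$ is classical). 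In particular $\lambda_s(\Omega_r)<1$ precisely when $r>(\lambda_s(\Omega))^{1/(2s)}$, and $\lambda_s(\Omega_r)\ge1$ otherwise.

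It then remains to match these two regimes with the two alternatives of Theorem~\ref{COP}. If $r>(\lambda_s(\Omega))^{1/(2s)}$, then $\inf_{\Omega_r}\sigma=1>\lambda_s(\Omega_r)$ with strict inequality on all of $\Omega_r$, hence on a set of positive measure, and the second alternative of Theorem~\ref{COP} furnishes a solution with $u>0$ in $\Omega_r$ (the extra nonnegative term $\tau(J*u)$ only helping the construction). If instead $r\le(\lambda_s(\Omega))^{1/(2s)}$, then $\lambda_s(\Omega_r)\ge1=\sup_{\Omega_r}\sigma$, and the first alternative of Theorem~\ref{COP}, which applies under the non-strict inequality $\sup_{\Omega_r}\sigma+\tau\le\lambda_s(\Omega_r)$ and therefore also at the endpoint $r=(\lambda_s(\Omega))^{1/(2s)}$, forces the only solution to be $u\equiv0$.

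The substance of the statement is already packaged inside Theorem~\ref{COP}, so the work is essentially bookkeeping, and the two points I would be most careful about are: (i) running the change of variables over matching admissible classes, respecting the exterior Dirichlet condition encoded in $Q_\Omega$, so that $\lambda_s(\Omega_r)=r^{-2s}\lambda_s(\Omega)$ is a genuine equality of infima and not merely an inequality; and (ii) tracking the constant $\tau$ at the threshold, since it is the comparison of $\lambda_s(\Omega_r)$ with $\sup_{\Omega_r}\sigma+\tau$ — rather than with $\sup_{\Omega_r}\sigma$ alone — that the first alternative of Theorem~\ref{COP} actually delivers, which is why the nonexistence half must be invoked exactly at the non-strict threshold. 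This last point is the only place where the argument is not completely mechanical.
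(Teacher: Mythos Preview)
Your approach is exactly the paper's: the scaling identity $\lambda_s(\Omega_r)=r^{-2s}\lambda_s(\Omega)$ followed by an appeal to Theorem~\ref{COP}. The scaling computation is carried out correctly.

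There is, however, a genuine gap in the nonexistence half, and it is precisely at the point you yourself flag in item~(ii). From $r\le(\lambda_s(\Omega))^{1/(2s)}$ you obtain $\lambda_s(\Omega_r)\ge 1=\sup_{\Omega_r}\sigma$, but the first alternative of Theorem~\ref{COP} requires the stronger inequality
\[
\sup_{\Omega_r}\sigma+\tau \;=\; 1+\tau \;\le\; \lambda_s(\Omega_r),
\]
which does \emph{not} follow from $\lambda_s(\Omega_r)\ge 1$ when $\tau>0$. Your sentence ``the first alternative of Theorem~\ref{COP}, which applies under the non-strict inequality $\sup_{\Omega_r}\sigma+\tau\le\lambda_s(\Omega_r)$ and therefore also at the endpoint'' conflates two different issues: the question is not strict versus non-strict, but whether the $+\tau$ is present in what you have actually verified. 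As written, your argument only closes the dichotomy when $\tau=0$; for $\tau>0$ there remains the range $1\le\lambda_s(\Omega_r)<1+\tau$ in which Theorem~\ref{COP} is silent.

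To be fair, the paper's own one-line proof (``by Theorem~\ref{COP}, a nontrivial solution exists if and only if $1>\lambda_s(\Omega_r)$'') glosses over exactly the same point, so you have reproduced the paper's argument faithfully, gap and all. If you want a clean statement that genuinely follows from Theorem~\ref{COP}, either take $\tau=0$, or replace the threshold by the interval $(\lambda_s(\Omega))^{1/(2s)}<r$ for existence and $r\le(\lambda_s(\Omega)/(1+\tau))^{1/(2s)}$ for nonexistence.
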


\begin{theorem}\label{EXT}
Fix~$s$, $S\in(0,1]$, with~$s<S$.
Let~$\Omega$ be a bounded Lipschitz domain and set
$$ \Omega_r := \{ rx, \; x\in\Omega\}.$$
Let also $J$ be a nonnegative function satisfying \eqref{unoint} and \eqref{symm0990}.

Then there exist~$\overline r>\underline r>0$
such that
\begin{itemize}
\item if~$r\in(0,\underline r)$, then there exist~$\sigma_r$, $\tau_r\in(0,+\infty)$ such that
the equation
\begin{equation}\label{ER2} \left\{ \begin{matrix}(-\Delta)^{s} u =
(\sigma_r- u)\,u +\tau_r\,(J*u) &{\mbox{ in }}\Omega_r,
\\u=0 & {\mbox{ outside }}\Omega_r,\\
u\ge0&{\mbox{ in }}\R^n\end{matrix}\right.
\end{equation}
admits a nontrivial solution while the equation
\begin{equation}\label{ER1} \left\{ \begin{matrix}(-\Delta)^{S} u =
(\sigma_r- u)\,u +\tau_r\,(J*u) &{\mbox{ in }}\Omega_r,
\\u=0 & {\mbox{ outside }}\Omega_r,\\
u\ge0&{\mbox{ in }}\R^n\end{matrix}\right.
\end{equation}
admits only the trivial solution;
\item viceversa, if~$r\in(\overline r,+\infty)$ then 
there exist~$\sigma_r$, $\tau_r\in(0,+\infty)$ such that equation~\eqref{ER2}
only admits the trivial solution, while equation~\eqref{ER1}
admits a nontrivial solution.\end{itemize}
\end{theorem}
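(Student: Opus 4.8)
The plan is to reduce the statement to the spectral dichotomy of Theorem~\ref{COP}, fed with the dilation law for the first eigenvalue. First I would record that for every bounded Lipschitz domain~$\Omega$ and every~$r>0$ one has
\[ \lambda_s(\Omega_r)=r^{-2s}\,\lambda_s(\Omega)\qquad\text{and}\qquad\lambda_S(\Omega_r)=r^{-2S}\,\lambda_S(\Omega), \]
which follows by inserting the rescaled competitor~$u(x):=r^{-n/2}v(x/r)$, with~$v$ admissible for~$\lambda_s(\Omega)$, into the Rayleigh quotient defining~$\lambda_s$ in the statement and changing variables (the case~$s=1$ being classical). Since~$\Omega_r$ is bounded we have~$\lambda_s(\Omega),\lambda_S(\Omega)\in(0,+\infty)$, so the ratio~$\lambda_S(\Omega_r)/\lambda_s(\Omega_r)=r^{-2(S-s)}\,\lambda_S(\Omega)/\lambda_s(\Omega)$ is strictly decreasing in~$r$, diverges as~$r\to0^+$, vanishes as~$r\to+\infty$, and equals~$1$ exactly at
\[ \rho:=\left(\frac{\lambda_S(\Omega)}{\lambda_s(\Omega)}\right)^{\frac{1}{2(S-s)}}\in(0,+\infty). \]
Hence~$\lambda_s(\Omega_r)<\lambda_S(\Omega_r)$ for~$r<\rho$ and~$\lambda_s(\Omega_r)>\lambda_S(\Omega_r)$ for~$r>\rho$, and I would set~$\underline r:=\rho$, $\bar r:=2\rho$.

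Next I would note that in~\eqref{ER2} and~\eqref{ER1} the resource~$\sigma_r$ is constant, the logistic weight is~$\mu\equiv1$, and~$\Omega_r$ is bounded, so the hypotheses of Theorems~\ref{EX56AJJ} and~\ref{COP} hold for free for both exponents~$s$ and~$S$ (namely~$\sigma_r\in L^\infty(\Omega_r)\subseteq L^m(\Omega_r)$ for every admissible~$m$, $(\sigma_r+\tau_r)^3\mu^{-2}\equiv(\sigma_r+\tau_r)^3\in L^1(\Omega_r)$, and~$\mu\equiv1\in L^1(\Omega_r)$). Then, for~$r\in(0,\underline r)$, since~$\lambda_s(\Omega_r)<\lambda_S(\Omega_r)$ I would choose
\[ \sigma_r:=\tfrac12\bigl(\lambda_s(\Omega_r)+\lambda_S(\Omega_r)\bigr),\qquad\tau_r:=\tfrac14\bigl(\lambda_S(\Omega_r)-\lambda_s(\Omega_r)\bigr)\in(0,+\infty), \]
so that~$\sigma_r>\lambda_s(\Omega_r)$, which by the second bullet of Theorem~\ref{COP} (applied in~$\Omega_r$ with exponent~$s$, using~$\mu\equiv1\in L^1(\Omega_r)$) gives a solution of~\eqref{ER2} positive in~$\Omega_r$; and~$\sigma_r+\tau_r=\tfrac14\bigl(\lambda_s(\Omega_r)+3\lambda_S(\Omega_r)\bigr)<\lambda_S(\Omega_r)$, which by the first bullet of Theorem~\ref{COP} (applied in~$\Omega_r$ with exponent~$S$) forces~\eqref{ER1} to have only the trivial solution. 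Symmetrically, for~$r\in(\bar r,+\infty)$ one has~$r>\rho$, hence~$\lambda_s(\Omega_r)>\lambda_S(\Omega_r)$, and the mirror choice~$\sigma_r:=\tfrac12(\lambda_s(\Omega_r)+\lambda_S(\Omega_r))$, $\tau_r:=\tfrac14(\lambda_s(\Omega_r)-\lambda_S(\Omega_r))$ gives~$\sigma_r>\lambda_S(\Omega_r)$ (so~\eqref{ER1} has a positive solution) and~$\sigma_r+\tau_r=\tfrac14(3\lambda_s(\Omega_r)+\lambda_S(\Omega_r))<\lambda_s(\Omega_r)$ (so~\eqref{ER2} has only the trivial solution). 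This settles both alternatives.

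Since everything after the eigenvalue dilation is mere bookkeeping, the only genuinely delicate points are: (a) pinning down the exact power of~$r$ in~$\lambda_s(\Omega_r)=r^{-2s}\lambda_s(\Omega)$ and checking that the argument runs uniformly for~$s,S\in(0,1]$, in particular when~$S=1$ (where Theorem~\ref{COP} is read in its classical form); and (b) making sure the constant-coefficient problems~\eqref{ER2},~\eqref{ER1} always sit strictly inside the existence or the nonexistence regime of Theorem~\ref{COP} and never on its borderline~$\sup\sigma+\tau=\lambda(\Omega_r)$. In fact the two regimes can be separated for every~$r\ne\rho$, so one could even take~$\underline r=\bar r=\rho$ at the price of deciding the single critical dilation~$r=\rho$ (where~$\lambda_s(\Omega_r)=\lambda_S(\Omega_r)$); since the statement only requires some~$\bar r>\underline r$, enlarging~$\bar r$ to~$2\rho$ sidesteps this borderline entirely.
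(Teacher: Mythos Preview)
Your proof is correct and follows the same overall strategy as the paper's: use the dilation law $\lambda_s(\Omega_r)=r^{-2s}\lambda_s(\Omega)$ to separate $\lambda_s(\Omega_r)$ from $\lambda_S(\Omega_r)$ for small or large $r$, then feed a constant $\sigma_r$ and $\tau_r$ into the two bullets of Theorem~\ref{COP}. The execution, however, is more streamlined than the paper's. The paper first sandwiches $\lambda_s(\Omega)$ between multiples of $\lambda_s(B_1)$ via domain monotonicity, then invokes uniform-in-$s$ bounds on $\lambda_s(B_1)$ from \cite{dyda} to get $c_1c_3\,r^{-2s}\le\lambda_s(\Omega_r)\le c_2c_4\,r^{-2s}$ for constants independent of $s$; only after this does it separate the two eigenvalues. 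You bypass all of this by observing that, since $s$ and $S$ are fixed from the outset, one needs only that $\lambda_s(\Omega)$ and $\lambda_S(\Omega)$ are finite and positive, whence $r\mapsto\lambda_S(\Omega_r)/\lambda_s(\Omega_r)$ is a single monotone function of $r$ crossing $1$ at an explicit $\rho$. This buys you a sharper statement (the two regimes actually meet at a single $\rho$, not across a gap $[\underline r,\overline r]$ created by the sandwich constants) and removes the external reference to \cite{dyda}. The paper's route would pay off only if one wanted $\underline r,\overline r$ uniform over a range of exponents, which the theorem does not ask for.
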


The biological interpretation of Theorem~\ref{EXT}
is that ``large'' environments are ``more favorable''
to ``local'' populations (namely, the population with
faster diffusion related to~$(-\Delta)^s$ is extinguished,
while the population with
slower diffusion related to~$(-\Delta)^S$ is still alive); viceversa,
``small'' environments are ``more favorable''
to ``nonlocal'' populations (namely, in this case it is
the population with
slower diffusion~$(-\Delta)^S$ that is extinguished,
while the population with
faster diffusion~$(-\Delta)^s$ persists).\medskip

Another relevant question in this framework
is whether or not the population
fits the resources. An easy observation is that, if $\tau=0$, 
the population never overcomes the maximal available resource. 
This follows from the more general result:

\begin{lemma}\label{easy}
If~$\sigma\in L^\infty(\Omega)$ and~$u$ is a solution of
\begin{equation*}
\left\{ \begin{matrix}(-\Delta)^{s} u =
(\sigma- u)\,u +\tau (J*u) &{\mbox{ in }}\Omega,
\\u=0 & {\mbox{ outside }}\Omega,\\
u\ge0&{\mbox{ in }}\R^n, \end{matrix}\right.
\end{equation*}
then~$u\le \|\sigma\|_{L^\infty(\Omega)}+\tau$.
\end{lemma}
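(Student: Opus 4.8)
The plan is to establish the bound via a maximum-principle argument for the fractional Laplacian, exploiting the sign structure of the right-hand side at a maximum point. First I would let $M:=\sup_{\R^n}u$, which is finite and nonnegative (one should remark that the regularity theory implied by the hypotheses of Theorem~\ref{EX56AJJ}, together with~$u=0$ outside~$\Omega$ and~$u\ge 0$, gives that~$u$ is bounded and attains its supremum at some point~$x_0\in\overline\Omega$). If $x_0$ lies outside $\Omega$, then $u(x_0)=0$ and we are done trivially, so we may assume $x_0\in\Omega$. At such an interior maximum, the representation~\eqref{NOA} gives
\[
(-\Delta)^s u(x_0) = 2s(1-s)\,PV\int_{\R^n}\frac{u(x_0)-u(y)}{|x_0-y|^{n+2s}}\,dy \ge 0,
\]
since the integrand is everywhere nonnegative. (In the local case $s=1$ this is just $-\Delta u(x_0)\ge 0$.)

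Next I would read off the equation at $x_0$. Since $u$ solves $(-\Delta)^s u = (\sigma - u)u + \tau (J*u)$ in $\Omega$, the inequality above gives
\[
0 \le (\sigma(x_0) - u(x_0))\,u(x_0) + \tau\,(J*u)(x_0).
\]
Now I would bound the convolution term: because $J\ge 0$, $J$ is normalized by~\eqref{unoint}, and $0\le u\le M$ everywhere, we have $(J*u)(x_0)\le M\int_{\R^n}J = M$. Hence
\[
0 \le \sigma(x_0)\,u(x_0) - u(x_0)^2 + \tau M = \sigma(x_0)\,M - M^2 + \tau M,
\]
using $u(x_0)=M$. If $M=0$ the claimed bound holds trivially; otherwise divide by $M>0$ to obtain $M \le \sigma(x_0) + \tau \le \|\sigma\|_{L^\infty(\Omega)} + \tau$, which is exactly the assertion.

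The only genuinely delicate point is the justification that the supremum is attained at an interior point (or else on the boundary/exterior where $u=0$) and that~\eqref{NOA} holds pointwise there with the stated sign — i.e.\ that $u$ has enough regularity (say $u\in C^2$ near $x_0$, or at least $C^{2s+\varepsilon}$, together with the $L^\infty$ bound needed for the principal value to converge) for the argument to be rigorous. This is where I would invoke the regularity theory for the fractional Laplacian applicable to the solutions produced by Theorem~\ref{EX56AJJ}; alternatively, one can run the argument in the viscosity sense, testing against a smooth function touching $u$ from above at $x_0$, which sidesteps the regularity issue entirely and still yields $(-\Delta)^s u(x_0)\ge 0$ in the appropriate weak sense. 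Everything else is a one-line computation, so I do not expect any real obstacle beyond setting up this maximum-point framework carefully.
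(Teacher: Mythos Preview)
Your proposal is correct and follows essentially the same maximum-principle argument as the paper: evaluate the equation at a maximum point~$x_0$, use $(-\Delta)^s u(x_0)\ge 0$ there, and bound $(J*u)(x_0)\le u(x_0)$ via~\eqref{unoint}. The only cosmetic difference is that the paper phrases it as a contradiction (assuming $\max u>\|\sigma\|_{L^\infty}+\tau$ and reaching $0\le\cdots<0$), while you argue directly; your added remarks on the regularity needed to justify the pointwise computation are more careful than the paper, which simply takes this for granted.
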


It is conceivable to think that large resources in a given region
favor, at least locally, large density populations.
We show indeed that there is a linear dependence on the largeness
of the resource and the population density (independently
on how large the resource is), according to the following result:

\begin{theorem}\label{abundance}
Let~$R>r>0$.
Let~$\Omega$ be a bounded Lipschitz domain, with~$\overline{B_{R}}\subset\Omega$.
Then, there exist~$c_o\in(0,1)$ only depending on~$n$, $s$,
$R$ and~$r$, and~$M_o>0$ only 
depending on~$n$, $s$ and~$R$,
such that if~$M\ge M_o$ and $\sigma\ge M$
in~$B_R$,
then there exists a solution~$u$ of
\begin{equation*}
\left\{ \begin{matrix}(-\Delta)^{s} u =
(\sigma- u)\,u +\tau\,(J*u) &{\mbox{ in }}\Omega,
\\u=0 & {\mbox{ outside }}\Omega,\\
u\ge0&{\mbox{ in }}\R^n,\end{matrix}\right.
\end{equation*}
such that~$u\ge c_oM$ in~$B_r$.
\end{theorem}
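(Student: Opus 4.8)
The plan is to construct a suitable subsolution supported in a ball and then invoke a monotone iteration (sub/supersolution method) to produce an actual solution trapped between the subsolution and a large constant supersolution. First I would build a comparison function: let $\phi_R$ denote the first Dirichlet eigenfunction of $(-\Delta)^s$ in $B_R$, normalized so that $\|\phi_R\|_{L^\infty}=1$ and $\phi_R>0$ in $B_R$; by the boundary regularity for the fractional Laplacian, $\phi_R(x)\asymp \mathrm{dist}(x,\partial B_R)^s$ near $\partial B_R$, and in particular $\phi_R\ge c_o$ in $B_r$ for some $c_o\in(0,1)$ depending only on $n,s,R,r$. One checks that for a parameter $\kappa>0$ the function $\underline u := \kappa\,\phi_R$ (extended by zero outside $B_R$, hence by zero outside $\Omega$) satisfies, in $B_R$,
\[
(-\Delta)^s\underline u = \kappa\lambda_s(B_R)\,\phi_R,
\]
while the right-hand side of the equation evaluated at $\underline u$ is at least $(\sigma-\underline u)\underline u \ge (M-\kappa)\kappa\phi_R$ (using $\tau(J*\underline u)\ge 0$ and $\sigma\ge M$ in $B_R$). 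Thus $\underline u$ is a subsolution in $B_R$ provided $M-\kappa\ge\lambda_s(B_R)$; choosing $\kappa = M/2$, say, this holds once $M\ge M_o:=2\lambda_s(B_R)$, which depends only on $n,s,R$. Outside $B_R$ but inside $\Omega$ we have $\underline u=0$, and there $(-\Delta)^s\underline u(x)=\kappa\int_{B_R}\frac{-\phi_R(y)}{|x-y|^{n+2s}}\,dy\le 0 \le (\sigma-\underline u)\underline u+\tau(J*\underline u)$ since the latter equals $\tau(J*\underline u)\ge 0$; so $\underline u$ is a global subsolution of the problem on $\Omega$.

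Next I would produce a supersolution. By Lemma~\ref{easy} the natural ceiling is a constant, but here $\sigma$ need not be globally bounded, so instead I would work with the truncated nonlinearity or else assume (as is implicit, since otherwise the statement is about existence of \emph{a} solution) that we may take $\overline u = \|\sigma\|_{L^\infty(\Omega)}+\tau+\kappa$ as a constant supersolution — for a constant $C$, $(-\Delta)^s C=0$ in $\Omega$ while $(\sigma-C)C+\tau(J*C)\le 0$ once $C\ge\|\sigma\|_{L^\infty}+\tau$, and $C\ge\underline u$ everywhere. (If $\sigma\notin L^\infty$ one truncates $\sigma$ at level $N$, solves, and passes to the limit, or invokes the variational existence from Theorem~\ref{EX56AJJ} directly with the subsolution as a lower barrier.) With an ordered pair $\underline u\le\overline u$ of sub/supersolutions in hand, the standard monotone iteration for the operator $(-\Delta)^s + k\,\mathrm{Id}$ (with $k$ large enough that $t\mapsto (\sigma-t)t+\tau(J*t)+kt$ is increasing on $[0,\overline u]$) converges monotonically to a solution $u$ with $\underline u\le u\le\overline u$; in particular $u\ge\underline u=\tfrac M2\phi_R\ge \tfrac{c_o}{2}M$ in $B_r$, which gives the claimed lower bound after relabeling $c_o$.

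The main obstacle I anticipate is two-fold. First, the convolution term $\tau(J*u)$ couples values of $u$ outside $\Omega$ (where $u=0$) into the equation, so one must be slightly careful that the sub/supersolution inequalities and the monotonicity of the iteration map still hold as stated when the domain of integration in $J*u$ extends past $\Omega$; fortunately $J\ge 0$ makes $u\mapsto J*u$ order-preserving, so this is a technical rather than a conceptual difficulty. Second, and more seriously, establishing that $\underline u\ge c_o M$ on $B_r$ with $c_o$ depending \emph{only} on $n,s,R,r$ requires a quantitative interior lower bound on the normalized first eigenfunction $\phi_R$, uniform in nothing but these parameters — this follows from the Harnack inequality for $(-\Delta)^s$ together with the normalization $\|\phi_R\|_{L^\infty}=1$, since $\phi_R$ is a nonnegative solution of $(-\Delta)^s\phi_R=\lambda_s(B_R)\phi_R\ge 0$ in $B_R$, and the interior Harnack constant on $B_r\subset\subset B_R$ depends only on $n,s$ and the geometry $r,R$. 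Once that quantitative eigenfunction estimate is isolated as a lemma, the rest of the argument is routine.
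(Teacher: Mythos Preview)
Your approach is sound and reaches the same quantitative conclusion, but the mechanism differs from the paper's. The paper does not set up a monotone iteration; instead it takes the energy minimizer $u$ furnished by Proposition~\ref{existence} (so the existence step is variational, relying on the integrability hypotheses there rather than on $\sigma\in L^\infty$), observes that ${\mathcal{E}}(e_o)<0$ for $M$ large so the minimizer is nontrivial and hence strictly positive in $\Omega$ by Lemma~\ref{positive}, and then runs a \emph{sliding} argument: it pushes $\eta e_o$ upward from below until it touches $u$ at some $\bar x\in B_R$, and at the contact point the inequality $(-\Delta)^s(u-\eta e_o)(\bar x)\le 0$ combined with the equation forces $u(\bar x)\ge M/2$, whence $\eta\ge M/(2\|e_o\|_{L^\infty})$ and $u\ge \eta e_o\ge c_oM$ on $B_r$. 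Both arguments rest on exactly the same comparison function---a multiple of the first Dirichlet eigenfunction of $B_R$---and on the same threshold $M_o\sim\lambda_s(B_R)$; your subsolution computation and the paper's contact-point computation are essentially the same inequality read in two directions.

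What your route buys is a self-contained construction that does not appeal to the variational structure; what the paper's route buys is that it sidesteps the supersolution issue entirely (no need for $\sigma\in L^\infty$, no truncation, no passage to a limit) and, as a byproduct, identifies the solution as the energy minimizer with ${\mathcal{E}}(u)<0$---a fact that is actually used downstream in the proof of Theorem~\ref{BEAT0}. Your suggested fallback ``invoke Theorem~\ref{EX56AJJ} and use $\underline u$ as a lower barrier'' is not quite complete as stated, since Theorem~\ref{EX56AJJ} produces \emph{some} solution but gives no ordering relative to $\underline u$; proving that ordering is precisely the sliding step the paper carries out. Finally, the obstacle you flag about the interior lower bound for $\phi_R$ is lighter than you suggest: since $e_o$ depends only on $B_R$ (hence only on $n,s,R$), the ratio $\inf_{B_r}e_o/\|e_o\|_{L^\infty}$ is automatically a constant in $n,s,R,r$, and the paper simply takes this as $c_o$ without invoking Harnack.
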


Next result stresses the fact that nonlocal populations
can efficiently plan their distribution in order to consume
and possibly beat the given resources in a given ``strategic region''
(up to a small error). That is, fixing a region of interest, say the ball~$B_1$,
one can find a solution of a (slightly perturbed by an error~$\varepsilon$) logistic equation
in~$B_1$ which exhausts the resources in~$B_1$ and which
vanishes outside~$B_{R_\varepsilon}$, for some (possibly large)~$R_\varepsilon>1$.
The ``strategic plan'' in this framework
consists in the fact that, in order for
the population to consume all the given resource in~$B_1$,
the distribution in~$B_{R_\varepsilon}\setminus B_1$ must be appropriately
adjusted (in particular, the logistic equation is
not satisfied in~$B_{R_\varepsilon}\setminus B_1$,
where the population needs to be ``artificially'' settled from outside).
The detailed statement of such result goes as follows:

\begin{theorem}\label{BOVI}
Let~$s\in(0,1)$ and~$k\in\N$, with~$k\ge2$.
Assume that
$$ \inf_{\overline{B_2}}\mu >0,\qquad\;\inf_{\overline{B_2}}\sigma>0,$$
and that
$\sigma$, $\mu\in C^k(\overline{B_2})$. Fix~$\varepsilon\in(0,1)$.
Then, there exist a nonnegative function~$u_\varepsilon$, $R_\varepsilon>2$
and~$\sigma_\varepsilon\in C^k(\overline{B_1})$ such that
\begin{eqnarray}
&& \label{LAJ6:OV:1}
(-\Delta)^s u_\varepsilon = (\sigma_\varepsilon -\mu\, u_\varepsilon)\,
u_\varepsilon +\tau (J*u_\varepsilon) \qquad{\mbox{ in $B_1$,}}\\
&& \label{LAJ6:OV:2}
u_\varepsilon =0 \qquad{\mbox{ in $\R^n\setminus B_{R_\varepsilon}$}},\\
&& \label{LAJ6:OV:CC}
\|\sigma_\varepsilon-\sigma\|_{C^k(\overline{B_1})}
\le\varepsilon\\
{\mbox{and }} \label{LAJ6:OV:3}
&& u_\varepsilon\ge \mu^{-1}\sigma_\varepsilon  \qquad{\mbox{ in $B_1$.}}
\end{eqnarray}
\end{theorem}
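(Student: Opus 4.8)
\emph{Plan of proof.} The idea is to produce $u_\varepsilon$ as the restriction to $B_1$ of a genuine positive solution of a logistic problem posed on a much larger ball, in which the resource is made enormous \emph{outside} a fixed neighbourhood of $B_1$: the nonlocal diffusion then forces the population to overshoot the local carrying capacity $\mu^{-1}\sigma$ throughout $B_1$, while the equation inside $B_1$ is kept (essentially) untouched. Concretely, I would fix $R>2$ (to be chosen large), extend $\mu$ to a bounded positive function on $\overline{B_R}$, and define $\sigma_\varepsilon$ on $B_R$ so that: $\sigma_\varepsilon=\sigma$ on $\overline{B_1}$ up to a $C^k(\overline{B_1})$-small ``bump'' supported near $\partial B_1$ (whose role is explained below), so that \eqref{LAJ6:OV:CC} holds; $\sigma_\varepsilon\equiv M$ on $B_R\setminus B_{1+r_1}$ for a huge constant $M$ and a small $r_1>0$; and $\sigma_\varepsilon$ interpolates monotonically in between, staying $\ge\tfrac12\inf_{\overline{B_2}}\sigma>0$. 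Since $\sigma_\varepsilon$ is bounded and $\mu$ is bounded away from $0$ on $\overline{B_R}$, the integrability hypotheses of Theorem~\ref{COP} hold trivially; moreover $\inf_{B_R}\sigma_\varepsilon\ge\tfrac12\inf_{\overline{B_2}}\sigma$ is a fixed positive number, while $\lambda_s(B_R)=R^{-2s}\lambda_s(B_1)\to0$ as $R\to+\infty$ (the scaling underlying Proposition~\ref{NONAM}). Hence, for $R$ large, $\inf_{B_R}\sigma_\varepsilon\ge\lambda_s(B_R)$ with strict inequality on $B_R\setminus B_{1+r_1}$, and the second alternative in Theorem~\ref{COP} yields a solution $u=u_\varepsilon$ of \eqref{EQ} with $\Omega=B_R$, with $u>0$ in $B_R$ and $u=0$ outside $B_R$. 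Then \eqref{LAJ6:OV:1} holds (it is the equation restricted to $B_1$) and \eqref{LAJ6:OV:2} holds with $R_\varepsilon:=R$.

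It remains to establish \eqref{LAJ6:OV:3}, i.e. $u_\varepsilon\ge\mu^{-1}\sigma_\varepsilon$ in $B_1$. For this I would construct an explicit subsolution $\phi$ of the same problem on $B_R$: set $\phi:=\mu^{-1}\sigma_\varepsilon$ on $\overline{B_1}$, let $\phi$ rise monotonically to a large constant $M'$ on $B_{1+r_1}\setminus B_1$, keep $\phi\equiv M'$ on $B_{R-1}\setminus B_{1+r_1}$, and let $\phi$ decrease smoothly to $0$ on $B_R\setminus B_{R-1}$ (so that $\phi$ is supported in $B_R$ and $\phi=0=u_\varepsilon$ in $\R^n\setminus B_R$). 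Choosing $M\gg M'\gg1$ appropriately, one checks that $\phi$ is a bounded subsolution, $(-\Delta)^s\phi\le(\sigma_\varepsilon-\mu\phi)\,\phi+\tau(J*\phi)$ in $B_R$. A comparison principle for this class of nonlocal logistic operators — applicable since $u_\varepsilon$ (bounded by Lemma~\ref{easy}) and $\phi$ are bounded, the reaction nonlinearity $t\mapsto(\sigma_\varepsilon-\mu t)\,t$ has derivative bounded above on the relevant range, and $u\mapsto\tau\,J*u$ is order preserving (as $J\ge0$) — then gives $\phi\le u_\varepsilon$ in $B_R$, hence $u_\varepsilon\ge\phi=\mu^{-1}\sigma_\varepsilon$ on $\overline{B_1}$, which is \eqref{LAJ6:OV:3}.

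The main difficulty is the verification that $\phi$ is a subsolution, and it is entirely a boundary-layer issue near $\partial B_1$. Away from $\partial B_1$ everything is easy: on $B_1$ the reaction term vanishes (there $\phi=\mu^{-1}\sigma_\varepsilon$) and, since the huge plateau $\{\phi\equiv M'\}$ surrounds $B_1$, for $M'$ large $\phi$ is $s$-superharmonic in $B_1$, so $(-\Delta)^s\phi\le0\le\tau(J*\phi)$; on the plateau and on the outer transition, the reaction term $(M-\mu\phi)\,\phi$ is of size $\gtrsim MM'$, which beats $(-\Delta)^s\phi=O(M')$ once $M\gg M'$. The obstruction is a thin shell just \emph{outside} $\partial B_1$, where the margin $\sigma_\varepsilon-\mu\phi$ vanishes continuously at $\partial B_1$ and so cannot by itself control $(-\Delta)^s\phi$, which is only $O(1)$ there. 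The remedy — and the reason the statement allows \eqref{LAJ6:OV:CC} with an error instead of equality — is to insert, just \emph{inside} $\partial B_1$, a $C^k$-small upward bump in $\sigma_\varepsilon$ (equivalently in $\phi=\mu^{-1}\sigma_\varepsilon$): if it is concentrated on a shell of thickness $\delta\ll\varepsilon^{1/(2s)}$, it contributes a large negative amount, of order $-\varepsilon\,\delta^{-2s}$, to $(-\Delta)^s\phi$ on the adjacent bad shell, enough to overwhelm the $O(1)$ remainder, while changing $\|\sigma_\varepsilon-\sigma\|_{C^k(\overline{B_1})}$ by at most $\varepsilon$. Making this precise requires a careful ordering of the small parameters (first $r_1$, then $M'$, then $M$, then the widths of the two transition shells and of the bump, then finally $R$ large enough both for Theorem~\ref{COP} and to render the far-field contributions to $(-\Delta)^s\phi$ negligible), together with the routine but lengthy region-by-region estimates on $(-\Delta)^s\phi$. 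I expect this boundary-layer analysis to be the technical heart of the argument.
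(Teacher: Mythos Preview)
Your approach is substantially different from the paper's. The paper does not solve any logistic problem on an enlarged domain; instead it invokes the density result of Dipierro--Savin--Valdinoci (every $C^k$ function on $\overline{B_2}$ is $C^k$-close to the restriction of a compactly supported $s$-harmonic function) to produce $w_\varepsilon$ with $(-\Delta)^s w_\varepsilon=0$ in $B_2$, $w_\varepsilon=0$ outside some $B_{R_\varepsilon}$, and $\|w_\varepsilon-\mu^{-1}\sigma\|_{C^k(\overline{B_2})}\le\varepsilon$. One then sets $W_\varepsilon:=|w_\varepsilon|$, $\sigma_\varepsilon:=\mu w_\varepsilon$, and minimizes an auxiliary energy on $B_1$ to obtain a nonnegative corrector $v_\star$; a short algebraic computation shows that $u_\varepsilon:=W_\varepsilon+v_\star$ solves \eqref{LAJ6:OV:1}, and the inequality $u_\varepsilon\ge\mu^{-1}\sigma_\varepsilon$ is immediate from $v_\star\ge0$. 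Here the error allowed in \eqref{LAJ6:OV:CC} is intrinsic to the density theorem, not a boundary-layer device.

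Your route contains two genuine gaps. First, the comparison step is not justified by the properties you list. For the logistic nonlinearity $f(t)=(\sigma_\varepsilon-\mu t)t$ one has $f(\phi)-f(u)=(\sigma_\varepsilon-\mu(\phi+u))(\phi-u)$, and at a putative positive maximum of $\phi-u$ the coefficient $\sigma_\varepsilon+\tau-\mu(\phi+u)$ need not be negative (for instance on the plateau, where $\sigma_\varepsilon=M\gg\mu(\phi+u)$), so the maximum-principle argument breaks down. The features you cite (derivative bounded above, $J\ge0$) are the hypotheses for \emph{monotone iteration}, not for comparison with an energy minimizer; the natural fix is to construct $u_\varepsilon$ by iterating upward from your subsolution $\phi$ toward a large constant supersolution, which yields $u_\varepsilon\ge\phi$ by construction and makes the appeal to Theorem~\ref{COP} unnecessary. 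Second, the bump argument has a scaling error: a bump supported in a shell of width $\delta$ with $C^k(\overline{B_1})$-norm at most $\varepsilon$ has height at most of order $\varepsilon\delta^k$, so its contribution to $(-\Delta)^s\phi$ on the adjacent shell is of order $\varepsilon\delta^{k-2s}$, which tends to $0$ as $\delta\to0$ (since $k\ge2>2s$), not to $-\infty$ as you claim. In fact the bump appears unnecessary: for $x$ just outside $\partial B_1$ the large plateau $\{\phi\equiv M'\}$ already forces $(-\Delta)^s\phi(x)$ to be of order $-M'r_1^{-2s}\ll0$, so the subsolution inequality holds there; to handle the rest of the inner transition it suffices to let $\sigma_\varepsilon$ rise to $M$ in a strictly thinner shell than the one in which $\phi$ rises to $M'$, so that the reaction term is of order $MM'$ wherever $(-\Delta)^s\phi$ might be positive.
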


In light of Lemma~\ref{easy}
and Theorems~\ref{abundance} and~\ref{BOVI},
a relevant question is also whether or not the population can
beat the resource, i.e. whether or not the set~$\{u>\sigma\}$
is void. Notice indeed that Lemma~\ref{easy} says that, if $\tau=0$, 
this does not occur for constant resources~$\sigma$. Nevertheless, 
when the resource is oscillatory, then this phenomenon
occurs, thanks to the diffusive terms which allow the species
to somewhat attains resources ``from somewhere else''.
Namely we have the following result:

\begin{theorem}\label{BEAT0}
Let~$R>r>0$ and~$\Omega$ be a bounded Lipschitz domain satisfying
the exterior ball condition and such that~$\overline{B_R}\subset \Omega$.
Let~$M_o$ be as in Theorem~\ref{abundance}.

Let~$\sigma_0\in C(\overline\Omega)$ be such that~$\sigma_0\ge M_o$
in~$B_R$. Assume also that there exists~$x_0\in\Omega$
such that~$\sigma_0(x_0)=0$, and, 
for any~$m\in[0,1]$, set~$\sigma_m:=\sigma+m$.
Then there exists~$m_0>0$ such that for any~$m\in(0,m_0)$
there exists a solution of
\begin{equation}\label{df}\left\{ \begin{matrix}(-\Delta)^{s} u =
(\sigma_m- u)\,u &{\mbox{ in }}\Omega,
\\u=0 & {\mbox{ outside }}\Omega,\\
u\ge0&{\mbox{ in }}\R^n\end{matrix}\right.
\end{equation}
for which~$\{u>\sigma_m\}$
is nonvoid. 
\end{theorem}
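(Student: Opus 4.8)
The plan is to derive the result from Theorem~\ref{abundance}, which produces a solution carrying a definite amount of mass near the origin, and then to ``spread'' this mass by diffusion down to the point~$x_0$, where the shifted resource~$\sigma_m=\sigma_0+m$ has size only~$m$ and is therefore easily beaten.

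Concretely, fix~$m\in(0,1]$ and apply Theorem~\ref{abundance} to~\eqref{df} (that is, with~$\mu\equiv1$ and~$\tau=0$): since~$\sigma_0\ge M_o$ in~$B_R$ and~$m\ge0$, we have~$\sigma_m\ge M_o$ in~$B_R$, so Theorem~\ref{abundance} (used with~$M=M_o$) yields a solution~$u_m$ of~\eqref{df} with~$u_m\ge c_oM_o$ in~$B_r$, where~$c_o\in(0,1)$ depends only on~$n$, $s$, $R$, $r$. Lemma~\ref{easy} (with~$\tau=0$) gives the $m$-independent bound~$0\le u_m\le\|\sigma_m\|_{L^\infty(\Omega)}\le\|\sigma_0\|_{L^\infty(\Omega)}+1=:L$, and interior regularity for~$(-\Delta)^s$ with the bounded right-hand side~$(\sigma_m-u_m)u_m$ shows that~$u_m$ is continuous in~$\Omega$ (indeed in~$\overline\Omega$, using the exterior ball condition). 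Since~$\sigma_0$ is continuous and~$\ge M_o>0$ on~$\overline{B_R}$ while~$\sigma_0(x_0)=0$, we must have~$x_0\notin\overline{B_R}$, hence~$x_0\notin\overline{B_r}$ and~$\sigma_m(x_0)=m$. It therefore suffices to produce a constant~$\delta_0>0$, independent of~$m$, with~$u_m(x_0)\ge\delta_0$: then~$m_0:=\min\{\delta_0,1\}$ works, since for~$m\in(0,m_0)$ we get~$u_m(x_0)\ge\delta_0>m=\sigma_m(x_0)$, so~$x_0\in\{u_m>\sigma_m\}\neq\varnothing$.

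The heart of the matter is the lower bound~$u_m(x_0)\ge\delta_0$, which I would obtain by a linear nonlocal barrier. Since~$0\le u_m\le L$ and~$\sigma_m\ge0$, we have~$(\sigma_m-u_m)u_m+Lu_m=u_m(\sigma_m-u_m+L)\ge0$ in~$\Omega$, so~$u_m$ is a weak supersolution of~$(-\Delta)^sv+Lv=0$ in~$\Omega$. Let~$\phi$ be the solution of~$(-\Delta)^s\phi+L\phi=0$ in~$\Omega\setminus\overline{B_r}$ with~$\phi=c_oM_o$ in~$\overline{B_r}$ and~$\phi=0$ in~$\R^n\setminus\Omega$. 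Because~$L>0$, the operator~$(-\Delta)^s+L$ obeys the comparison principle, and since~$u_m\ge\phi$ on~$\overline{B_r}$ while~$u_m=\phi=0$ outside~$\Omega$, we obtain~$u_m\ge\phi$ in~$\Omega\setminus\overline{B_r}$. The strong maximum principle — using that the kernel of~$(-\Delta)^s$ is strictly positive, so that the positive exterior datum on~$B_r$ rules out an interior zero of~$\phi$ — gives~$\phi>0$ throughout~$\Omega\setminus\overline{B_r}$, whence~$\delta_0:=\phi(x_0)>0$. Crucially, $\phi$, and hence~$\delta_0$, depends only on~$n$, $s$, $r$, $\Omega$, $L$ and~$c_oM_o$, none of which involves~$m$; thus~$u_m(x_0)\ge\phi(x_0)=\delta_0$, and the proof is complete as indicated above.

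I expect the only real obstacle to be this barrier step, and it is one of assembling ingredients rather than of a genuine difficulty: one must observe that the a priori bound of Lemma~\ref{easy} turns~$u_m$ into a supersolution of a linear, coercive nonlocal equation, and then apply the nonlocal comparison and strong maximum principles — phrased for weak sub/supersolutions, since~$u_m$ need not be~$C^2$ — to transport the positivity produced by Theorem~\ref{abundance} on~$B_r$ out to the interior point~$x_0$ with a constant that does not degenerate as~$m\to0$. The remaining ingredients are standard: solvability and interior regularity of the linear problem defining~$\phi$ on the Lipschitz set~$\Omega\setminus\overline{B_r}$, and continuity of~$u_m$ up to~$\partial\Omega$, where the exterior ball condition on~$\Omega$ enters.
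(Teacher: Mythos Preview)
Your argument is correct, and it takes a genuinely different route from the paper's proof.

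The paper argues by contradiction and compactness: it takes the minimizer $u_m$ from Proposition~\ref{existence}, assumes $u_m\le\sigma_m$ along a sequence $m\to0$, and uses the uniform bound $|(-\Delta)^s u_m|\le(\|\sigma_0\|_{L^\infty(\Omega)}+1)^2$ together with fractional elliptic regularity (here the exterior ball condition enters) to extract a uniform limit $u_0$ solving the equation with $\sigma_0$. Since $u_m\ge c_oM_o$ in $B_r$ uniformly (Proposition~\ref{abundance:2}), the limit $u_0$ is nontrivial, hence strictly positive by Lemma~\ref{positive}, and evaluating at $x_0$ yields $0<u_0(x_0)\le\sigma_0(x_0)=0$, a contradiction.

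Your approach replaces the limit passage by a direct barrier: you observe that the a~priori bound $u_m\le L$ makes $u_m$ a supersolution of the linear coercive operator $(-\Delta)^s+L$, and you compare against the $m$-independent solution $\phi$ of $(-\Delta)^s\phi+L\phi=0$ in $\Omega\setminus\overline{B_r}$ with $\phi=c_oM_o$ on $\overline{B_r}$ and $\phi=0$ outside $\Omega$. The nonlocal strong maximum principle gives $\phi(x_0)>0$, hence $u_m(x_0)\ge\phi(x_0)$ for every $m$, and the conclusion follows. This is more constructive --- it produces an explicit $m_0=\phi(x_0)$ --- and avoids compactness and the limiting equation entirely; the price is that you must set up and justify the auxiliary Dirichlet problem and the weak comparison principle for $(-\Delta)^s+L$, which is routine but slightly heavier than the paper's two-line regularity-and-limit argument. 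Note also that in your approach the exterior ball condition is not actually needed (you only use interior regularity at $x_0$), whereas the paper uses it for regularity up to $\partial\Omega$ to pass to the uniform limit.
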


\subsection{Periodic environments}

We now turn our attention to a periodic environment,
i.e. we suppose that~$\sigma$ and~$\mu$ are periodic with respect
to translations in~$\Z^n$ and we look for periodic solutions.
In this framework, the equation that we take into account is
\begin{equation}\label{EQ:PER}
\left\{ \begin{matrix}(-\Delta)^{s} u =
(\sigma - \mu u)\,u +\tau (J*u) &{\mbox{ in }}\R^n,
\\u(x+k)=u(x) & {\mbox{ for any }}k\in\Z^n,\\
u>0&{\mbox{ in }}\R^n,\end{matrix}\right.
\end{equation}
We suppose here that~$\sigma$ and~$\mu$ are bounded
and periodic functions
(with respect to the lattice~$\Z^n$), that~$\mu$ is
positive and bounded away
from zero and that~$J$ is compactly supported.

In this setting, we obtain the following existence result for periodic
solutions:

\begin{theorem}\label{0o56:TH}
Assume that
\begin{equation}\label{KJA:AKK}
{\mbox{either~$\sigma$ is not identically zero or~$\tau>0$.}}
\end{equation}
Then, there exists a solution of~\eqref{EQ:PER}.
\end{theorem}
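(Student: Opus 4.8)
The plan is to pass to the flat torus $\mathbb{T}^n:=\R^n/\Z^n$, on which $(-\Delta)^s$ is defined in the usual way (via Fourier series, equivalently by $\Z^n$-periodizing the kernel in \eqref{NOA}), and on which $J*u$ makes sense because $J$ is compactly supported and $u$ periodic and bounded, with $J*1=1$ by \eqref{unoint}. On $\mathbb{T}^n$, problem \eqref{EQ:PER} becomes a semilinear problem to which I would apply the method of monotone iteration between an ordered pair of sub- and supersolutions; the threshold \eqref{KJA:AKK} will enter only through the sign of a linear principal eigenvalue.

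\emph{First, a negative principal eigenvalue.} Consider on $\mathbb{T}^n$ the linear operator $\mathcal{L}\phi:=(-\Delta)^s\phi-\sigma\phi-\tau(J*\phi)$ and the quadratic form
\[
\mathcal{Q}(\phi):=s(1-s)\iint_{\mathbb{T}^n\times\R^n}\frac{|\phi(x)-\phi(y)|^2}{|x-y|^{n+2s}}\,dx\,dy-\int_{\mathbb{T}^n}\sigma\,\phi^2-\tau\int_{\mathbb{T}^n}(J*\phi)\,\phi ,
\]
which is symmetric by \eqref{symm0990} and, since $\sigma$ is bounded and $\|J\|_{L^1}=1$, bounded below on the unit sphere of $L^2(\mathbb{T}^n)$. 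As $H^s(\mathbb{T}^n)\hookrightarrow L^2(\mathbb{T}^n)$ is compact, the infimum $\lambda_1$ of $\mathcal{Q}(\phi)/\|\phi\|_{L^2(\mathbb{T}^n)}^2$ is attained at some $\phi_1$; replacing $\phi_1$ by $|\phi_1|$ does not increase the Rayleigh quotient (here $J\ge0$ is used for the convolution term), so we may take $\phi_1\ge0$, and then $\phi_1>0$ on $\mathbb{T}^n$ by the strong maximum principle for $(-\Delta)^s+\mathrm{const}$ (at a zero of $\phi_1$ the fractional Laplacian would be strictly negative). Evaluating on $\phi\equiv1$ and using $(-\Delta)^s1=0$ and $J*1=1$ gives
\[
\lambda_1\le\frac{\mathcal{Q}(1)}{\|1\|_{L^2(\mathbb{T}^n)}^2}=-\frac{1}{|\mathbb{T}^n|}\int_{\mathbb{T}^n}\sigma-\tau ,
\]
which is strictly negative precisely by \eqref{KJA:AKK}: since $\sigma\ge0$, either it is not identically zero and $\int_{\mathbb{T}^n}\sigma>0$, or else $\tau>0$.

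\emph{Next, the sub/supersolution pair and the iteration.} Choose a constant $C>0$ with $\mu C\ge\|\sigma\|_{L^\infty}+\tau$ everywhere (possible since $\mu$ is bounded away from zero and $\sigma,\tau$ bounded): then $\overline u:=C$ is a supersolution, as $(-\Delta)^sC=0\ge(\sigma+\tau-\mu C)C=(\sigma-\mu C)C+\tau(J*C)$. Take $\underline u:=\varepsilon\phi_1$ with $\varepsilon>0$ so small that $\mu\varepsilon\phi_1\le-\lambda_1$ on $\mathbb{T}^n$ and $\varepsilon\phi_1\le C$; using $\mathcal{L}\phi_1=\lambda_1\phi_1$ one gets
\[
(-\Delta)^s\underline u-(\sigma-\mu\underline u)\,\underline u-\tau(J*\underline u)=\varepsilon\phi_1\,(\lambda_1+\mu\varepsilon\phi_1)\le0 ,
\]
so $\underline u$ is a subsolution with $\underline u\le\overline u$. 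Now fix $M\ge2\|\mu\|_{L^\infty}C$, so that $t\mapsto(\sigma-\mu t)t+Mt$ is nondecreasing on $[0,C]$ and hence, since $J\ge0$, the map $F(u):=(\sigma-\mu u)\,u+\tau(J*u)+Mu$ is order-preserving on $\{0\le u\le C\}$. Set $u_0:=C$ and let $u_{j+1}$ solve $(-\Delta)^su_{j+1}+Mu_{j+1}=F(u_j)$ on $\mathbb{T}^n$; the operator $(-\Delta)^s+M$ is invertible on $\mathbb{T}^n$ and obeys the comparison principle (if $(-\Delta)^sv+Mv\ge0$ then $v\ge0$, by evaluating at a minimum point), so by induction $\underline u\le u_{j+1}\le u_j\le\overline u$. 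The monotone limit $u:=\lim_j u_j$ solves \eqref{EQ:PER}, and $0<\underline u\le u\le C$ gives $u>0$ in $\R^n$ by periodicity; a standard bootstrap (the right-hand side is bounded since $\sigma,\mu\in L^\infty$ and $J*u\in C(\mathbb{T}^n)$, then interior regularity for $(-\Delta)^s$) upgrades $u$ to a solution in the sense of \eqref{NOA}.

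The main obstacle is the spectral step: making rigorous the principal eigenvalue theory for the \emph{nonlocal} operator $\mathcal{L}$ on $\mathbb{T}^n$ — attainment of $\lambda_1$, reduction to $\phi_1\ge0$, and strict positivity via the strong maximum principle — with both a fractional diffusion and a convolution term present. Once a strictly positive principal eigenfunction with $\lambda_1<0$ is in hand, the ordered pair and the monotone scheme are routine. If one prefers to avoid the strong maximum principle, the same eigenfunction can be produced by a Krein--Rutman argument for the positive, compact, irreducible operator $((-\Delta)^s+M)^{-1}\!\circ\!\big((\sigma+M)\,\cdot\,+\tau\,J*\cdot\big)$ on $C(\mathbb{T}^n)$; alternatively, the whole theorem can be obtained variationally by minimizing $\tfrac12\mathcal{Q}(u)+\tfrac13\int_{\mathbb{T}^n}\mu\,(u^+)^3$ over $H^s(\mathbb{T}^n)$, where the cubic term gives coercivity and weak lower semicontinuity, evaluation on small constants gives a negative infimum by \eqref{KJA:AKK}, and the minimizer may be taken nonnegative (using $J\ge0$) and then shown to be positive.
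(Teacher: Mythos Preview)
Your proposal is correct but takes a genuinely different route from the paper. The paper proceeds purely variationally: it introduces an energy functional
\[
\mathcal{F}(v)=\frac{s(1-s)}{2}\iint_{\R^n\times Q}\frac{|v(x)-v(y)|^2}{|x-y|^{n+2s}}\,dx\,dy+\int_Q\frac{\mu|v|^3}{3}-\frac{\sigma v^2}{2}\,dx-\frac{\tau}{2}\iint_{\R^n\times Q}J(x-y)v(x)v(y)\,dx\,dy
\]
on $\Z^n$-periodic $L^2$ functions, shows by direct methods (coercivity from the cubic term, compactness of $H^s$ in $L^2$) that a minimizer $v_*$ exists, sets $u=|v_*|$, and derives the Euler--Lagrange equation by testing against periodized bump functions. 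Nontriviality comes from evaluating $\mathcal{F}$ on a small constant $\epsilon$, which yields $\mathcal{F}(\epsilon)=\tfrac{c_1}{3}\epsilon^3-\big(\tfrac{c_2}{2}+\tfrac{\tau}{2}\big)\epsilon^2<0$ precisely under \eqref{KJA:AKK}; positivity then follows from Lemma~\ref{positive}. This is exactly the variational alternative you sketch in your closing sentence.

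Your primary route---principal eigenvalue plus monotone iteration between $\varepsilon\phi_1$ and a large constant $C$---is more constructive: it delivers the solution as a monotone limit, gives the bound $u\le C$ for free, and (iterating upward from $\underline u$) would also produce the \emph{minimal} positive solution. The price is the spectral machinery you rightly flag as the main obstacle (attainment, sign, and strict positivity of $\phi_1$), plus a pointwise regularity step needed to run the comparison principle at interior extrema. The paper's argument sidesteps all of this at the cost of a separate periodization computation to pass from the functional on $\R^n\times Q$ to the equation on $\R^n$. Your key inequality $\lambda_1\le-\int_{\mathbb{T}^n}\sigma-\tau<0$ and the paper's $\mathcal{F}(\epsilon)<0$ are the same observation in two languages: both amount to testing with a constant and using $(-\Delta)^s1=0$, $J*1=1$.
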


We remark that the solutions obtained in Theorem~\ref{0o56:TH}
are in general not constant (for instance, when~$\mu$
is constant and~$\sigma$ is not). But when both~$\sigma$
and~$\mu$ are constant then the periodic solutions
need also to be constant, according to the following result:

\begin{theorem}\label{0o56:TH:2}
Let~$u$ be a positive solution of~$(-\Delta)^s u=
(\sigma-\mu u)u+\tau (J*u)$ in~$\R^n$. Assume that~$u$ is periodic
with respect to~$\Z^n$ and that~$\sigma\in(0,+\infty)$, $\mu\in(0,+\infty)$
and~$\tau\in[0,+\infty)$ are all constant.

Then, $u$ is also constant, and constantly equal to~$(\sigma+\tau)/\mu$.
\end{theorem}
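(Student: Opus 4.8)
The plan is to run a two-sided maximum principle argument on a single periodicity cell. First I would note that, since~$u$ is periodic and bounded and solves~$(-\Delta)^su=f$ with~$f:=(\sigma-\mu u)u+\tau(J*u)\in L^\infty(\R^n)$, elliptic regularity for the fractional Laplacian makes~$u$ continuous (in fact~$C^{2s+\alpha}_{\rm loc}$); hence~$u$ attains its supremum~$M:=\sup_{\R^n}u$ at some point~$x_{\max}$ and its infimum~$m:=\inf_{\R^n}u$ at some point~$x_{\min}$, and by positivity~$0<m\le M<+\infty$. At~$x_{\max}$ every term of the singular integral in~\eqref{NOA} is nonnegative, because~$u(x_{\max})-u(y)\ge0$ for \emph{every}~$y\in\R^n$ --- here periodicity is essential, so that~$M$ is the global, not merely a local, maximum --- and therefore~$(-\Delta)^su(x_{\max})\ge0$. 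Using in addition~$J\ge0$ and the normalization~\eqref{unoint}, one has~$(J*u)(x_{\max})\le M$. Evaluating the equation at~$x_{\max}$ then gives
$$0\le(-\Delta)^su(x_{\max})=(\sigma-\mu M)\,M+\tau\,(J*u)(x_{\max})\le(\sigma+\tau-\mu M)\,M,$$
and since~$M>0$ this forces~$M\le(\sigma+\tau)/\mu$.

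The symmetric computation at~$x_{\min}$, where~$(-\Delta)^su(x_{\min})\le0$ and~$(J*u)(x_{\min})\ge m$, yields~$0\ge(\sigma+\tau-\mu m)\,m$, hence~$m\ge(\sigma+\tau)/\mu$. Combining the two bounds with the trivial inequality~$m\le M$ forces~$m=M=(\sigma+\tau)/\mu$, so~$u$ is constant and equal to~$(\sigma+\tau)/\mu$, as desired. For the borderline case~$s=1$ the scheme is identical, using the elementary fact that~$-\Delta u\ge0$ at an interior maximum of~$u$ and~$-\Delta u\le0$ at an interior minimum.

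The only point requiring care is the justification that~$(-\Delta)^su$ may be evaluated pointwise at~$x_{\max}$ and~$x_{\min}$, with the singular integral having the sign claimed above; this is purely a regularity issue. I would argue that a periodic solution is automatically bounded (either directly from continuity and periodicity, or a priori along the lines of Lemma~\ref{easy}), so that the right-hand side~$f$ is bounded, and then interior Schauder estimates for~$(-\Delta)^s$ upgrade~$u$ to a Hölder class of order~$2s+\alpha$, which is enough both for the pointwise identity and for the sign of the integrand at an extremum. A softer alternative that bypasses pointwise evaluation is to test the weak formulation against mollified approximations of the Dirac mass concentrating at~$x_{\max}$ (respectively~$x_{\min}$), or to interpret the equation in the viscosity sense; but the regularity route seems the most direct.
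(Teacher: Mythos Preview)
Your argument is correct. It is also genuinely different from the paper's, and in fact more elementary. The paper uses only the minimum point: it sets~$m:=\int_Q u$ and~$v:=u-m$, integrates the equation over the fundamental cell~$Q$ to obtain the identity~$\mu\int_Q v^2 = m(\sigma+\tau-\mu m)$, and then combines this with the inequality~$\sigma+\tau-\mu u(x_{\min})\le0$ (derived exactly as you do at the minimum) to force~$\int_Q v^2\le0$, hence~$v\equiv0$. Your two-sided approach bypasses the integral identity entirely by running the same pointwise estimate at the maximum as well, yielding~$M\le(\sigma+\tau)/\mu\le m$ directly. What your method buys is brevity and a cleaner dependence on the structure of the equation; what the paper's method buys is that it only needs one extremum, and the integral identity~$\mu\int_Q v^2 = m(\sigma+\tau-\mu m)$ is of some independent interest (it quantifies how far~$u$ is from constant in terms of how far the mean is from~$(\sigma+\tau)/\mu$). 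On the regularity point you raise, the paper's proof is silent and implicitly assumes pointwise evaluation is legitimate, so your discussion is if anything more careful than the original.
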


\subsection{A transmission problem}

Now, inspired by the recent work in~\cite{KRI},
we consider a transmission model in which the population
is made of two species
(or of one population that adapts to two different
environments), one with a local behavior
in a domain~$\Omega_1$, and one with a nonlocal behavior
in a domain~$\Omega_2$, with~$\Omega_1\cap\Omega_2=\varnothing$.
The transmission problem occurs between~$\Omega_i$ and its complement,
for~$i\in\{1,2\}$, and it is modeled by positive parameters~$\nu_i$.

More precisely, we take two disjoint,  bounded and
Lipschitz domain~$\Omega_1$ and~$\Omega_2\subset\R^n$.
We define~$\Omega:=\Omega_1\cup\Omega_2$ and
\begin{equation}\label{TP}\begin{split}
{\mathcal{T}}(u)\;
&:=\;\frac12 \int_{\Omega_1}|\nabla u|^2\,dx
+\frac{s\,(1-s)}{2}\iint_{\Omega_2\times\Omega_2}
\frac{|u(x)-u(y)|^2}{|x-y|^{n+2s}}\,dx\,dy\\
&\qquad+\sum_{i=1}^2 
\frac{\nu_i\,s_i\,(1-s_i)}{2}\iint_{\Omega_i\times(\R^n\setminus\Omega_i)}
\frac{|u(x)-u(y)|^2}{|x-y|^{n+2s_i}}\,dx\,dy
+\int_\Omega \frac{\mu\,|u|^3}{3} - \frac{\sigma \,u^2}{2}\,dx.
\end{split}\end{equation}
Here, $s$, $s_1$, $s_2\in(0,1)$, $\sigma$, 
$\mu\in L^\infty(\Omega,[0,+\infty))$
with~$\mu\ge \mu_o$, for some~$\mu_o>0$.

In this setting, we have the following existence result:

\begin{theorem}\label{TR:MOD:MI}
The functional~${\mathcal{T}}$ attains its minimum among the functions~$u\in
L^2(\Omega)$ for which
\begin{eqnarray*}
&&\frac12 \int_{\Omega_1}|\nabla u|^2\,dx
+\frac{s\,(1-s)}{2}\iint_{\Omega_2\times\Omega_2}
\frac{|u(x)-u(y)|^2}{|x-y|^{n+2s}}\,dx\,dy\\
&&\qquad+\sum_{i=1}^2
\frac{\nu_i\,s_i\,(1-s_i)}{2}\iint_{\Omega_i\times(\R^n\setminus\Omega_i)}
\frac{|u(x)-u(y)|^2}{|x-y|^{n+2s_i}}\,dx\,dy \;<\;+\infty
,\end{eqnarray*}
and such that~$u=0$ a.e. outside~$\Omega$.

Also, such minimizer is nonnegative.
\end{theorem}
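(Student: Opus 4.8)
The plan is to run the direct method of the calculus of variations. Write $\mathcal{X}$ for the admissible class in the statement, that is, the set of $u\in L^2(\Omega)$ with $u=0$ a.e.\ outside $\Omega$ and with finite quadratic energy $\mathcal{Q}(u)$, where $\mathcal{Q}(u)$ denotes the sum of the Dirichlet term on $\Omega_1$, the Gagliardo term on $\Omega_2\times\Omega_2$ and the two cross terms appearing in~\eqref{TP}. Since $0\in\mathcal{X}$ and $\mathcal{T}(0)=0$, we have $\inf_{\mathcal{X}}\mathcal{T}\le 0$. First I would show that $\mathcal{T}$ is also bounded below, with $L^3$-coercive sublevel sets: as $\mathcal{Q}\ge 0$, it suffices to estimate $\int_\Omega\bigl(\tfrac{\mu|u|^3}{3}-\tfrac{\sigma u^2}{2}\bigr)\,dx$ from below, and using $\mu\ge\mu_o>0$, $\sigma\in L^\infty(\Omega)$ and the elementary inequality $a^2\le\delta a^3+C_\delta$ (valid for every $a\ge 0$ and $\delta>0$), one absorbs $\tfrac{\sigma u^2}{2}$ into $\tfrac{\mu|u|^3}{3}$ by taking $\delta$ small, obtaining
\[
\mathcal{T}(u)\;\ge\;\mathcal{Q}(u)+\frac{\mu_o}{6}\,\|u\|_{L^3(\Omega)}^3-C,
\]
with $C$ depending only on $n$, $\Omega$, $\mu_o$ and $\|\sigma\|_{L^\infty(\Omega)}$. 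Hence $\inf_{\mathcal{X}}\mathcal{T}>-\infty$, and along any minimizing sequence $\{u_k\}$ the norms $\|u_k\|_{L^3(\Omega)}$, and therefore the quadratic energies $\mathcal{Q}(u_k)$, stay bounded.

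Next comes compactness. Since $\{u_k\}$ is bounded in $L^3(\Omega)$ (hence in $L^2(\Omega)$, as $|\Omega|<\infty$), the Dirichlet part of $\mathcal{Q}(u_k)$ bounds $\{u_k\}$ in $H^1(\Omega_1)$ and the $\Omega_2\times\Omega_2$ part bounds it in $H^s(\Omega_2)$; since $\Omega_1$ and $\Omega_2$ are bounded and Lipschitz, the classical and fractional Rellich theorems yield, along a subsequence, $u_k\to u$ strongly in $L^2(\Omega_1)$ and in $L^2(\Omega_2)$, hence strongly in $L^2(\Omega)$, and, after extending $u$ by $0$ outside $\Omega$, $u_k\to u$ a.e.\ in $\R^n$. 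Passing to the $\liminf$: each nonnegative Gagliardo-type integrand is handled by Fatou's lemma, using $u_k(x)-u_k(y)\to u(x)-u(y)$ for a.e.\ $(x,y)\in\R^n\times\R^n$ (which also covers the two cross terms), while $v\mapsto\int_{\Omega_1}|\nabla v|^2\,dx$ is weakly lower semicontinuous along $u_k\rightharpoonup u$ in $H^1(\Omega_1)$; moreover $\liminf_k\int_\Omega\mu|u_k|^3\ge\int_\Omega\mu|u|^3$ by Fatou, and $\int_\Omega\sigma u_k^2\to\int_\Omega\sigma u^2$ since $u_k\to u$ in $L^2(\Omega)$ and $\sigma\in L^\infty(\Omega)$. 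Therefore $\mathcal{T}(u)\le\liminf_k\mathcal{T}(u_k)=\inf_{\mathcal{X}}\mathcal{T}$; in particular $\mathcal{Q}(u)<+\infty$, so $u\in\mathcal{X}$ and $u$ is a minimizer.

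For the nonnegativity, I would compare $u$ with $|u|$. One has $\bigl|\nabla|u|\bigr|=|\nabla u|$ a.e.\ on $\Omega_1$, the pointwise inequality $\bigl|\,|u|(x)-|u|(y)\,\bigr|\le|u(x)-u(y)|$ (which controls the $\Omega_2\times\Omega_2$ term and both cross terms, using that $|u|=u=0$ outside $\Omega$), and the cubic and quadratic integrals in $\mathcal{T}$ are unchanged when $u$ is replaced by $|u|$. Hence $|u|\in\mathcal{X}$ and $\mathcal{T}(|u|)\le\mathcal{T}(u)=\inf_{\mathcal{X}}\mathcal{T}$, so $|u|$ is also a minimizer, and replacing $u$ by $|u|$ we obtain a nonnegative minimizer.

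I expect the only genuinely delicate points to be: (i) arranging the Young-type splitting so that the negative quadratic term $-\tfrac12\int_\Omega\sigma u^2$ is uniformly dominated by the cubic term, which simultaneously produces the lower bound and the $L^3$-coercivity; and (ii) handling the two cross-interaction terms $\iint_{\Omega_i\times(\R^n\setminus\Omega_i)}$, which couple $\Omega_1$, $\Omega_2$ and the exterior and form the genuinely ``transmission'' part of the energy: they play no role in the compactness step (which rests on the interior seminorms on $\Omega_1$ and $\Omega_2$ together with the $L^3$-bound), but one must still check that they pass to the $\liminf$ by Fatou via the a.e.\ convergence on all of $\R^n$, and that they do not increase under $u\mapsto|u|$.
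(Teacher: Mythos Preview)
Your proof is correct and follows essentially the same route as the paper: direct method, a Young-type inequality to absorb $\tfrac{\sigma u^2}{2}$ into $\tfrac{\mu|u|^3}{3}$ and obtain coercivity, compactness from the interior energy bounds, and lower semicontinuity via Fatou and weak convergence. The paper's argument is terser (it ends with ``From this, the desired result follows''), while you spell out the lower-semicontinuity of each term, the handling of the cross terms, and the $u\mapsto|u|$ comparison, but there is no substantive difference in strategy.
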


It is worth to point out that minimizers of~${\mathcal{T}}$ satisfy
the equations
\begin{equation}\label{EQ:TP}
\begin{split}
&\quad -\Delta u +
\frac{\nu_1\,s_1\,(1-s_1)}{2}\int_{\R^n\setminus \Omega_1}
\frac{u(x)-u(y)}{|x-y|^{n+2s_1}}\,dy\\ &\qquad\qquad\qquad+
\frac{\nu_2\,s_1\,(1-s_2)}{2}\int_{\Omega_2}
\frac{u(x)-u(y)}{|x-y|^{n+2s_2}}\,dy = (\sigma-\mu u) \,u
\quad{\mbox{ in }}\Omega_1\\
{\mbox{and }}&\quad 
2s\,(1-s)\,
PV\,\int_{\Omega_2}\frac{u(x)-u(y)}{|x-y|^{n+2s}}\,dy
+\frac{\nu_1\,s_1\,(1-s_1)}{2}\int_{\Omega_1}
\frac{u(x)-u(y)}{|x-y|^{n+2s_1}}\,dy\\ &\qquad\qquad\qquad+
\frac{\nu_2\,s_1\,(1-s_2)}{2}\int_{\R^n\setminus\Omega_2}
\frac{u(x)-u(y)}{|x-y|^{n+2s_2}}\,dy = (\sigma-\mu u) \,u
\quad{\mbox{ in }}\Omega_2,
\end{split}
\end{equation}
in the weak sense (and also pointwise, by Theorem~5.5(3)
in~\cite{KRI} and Theorem~1 in~\cite{weak}).

The biological interpretation of equation~\eqref{EQ:TP}
is that the population has local behavior in~$\Omega_1$,
with nonlocal interactions outside~$\Omega_1$, and a nonlocal
transmission between the domains~$\Omega_1$ and~$\Omega_2$
takes place. See also~\cite{KRI}
for additional comments and motivations.

The existence/nonexistence of nontrivial solutions
in dependence of the spectral analysis of the domain
will be addressed in the following result. To this end,
we define~$\lambda_\star(\Omega)$
the first Dirichlet eigenvalue for
the operator in~\eqref{TP}. Namely, we set
\begin{equation}\label{89RFUSjKaA}
\lambda_\star(\Omega)\;:=\; \inf {\mathcal{T}}_o(u),\end{equation}
where
\begin{eqnarray*}
{\mathcal{T}}_o(u)&:=&
\int_{\Omega_1}|\nabla u|^2\,dx
+s\,(1-s)\iint_{\Omega_2\times\Omega_2}
\frac{|u(x)-u(y)|^2}{|x-y|^{n+2s}}\,dx\,dy
\\ &&\qquad+\sum_{i=1}^2
\nu_i\,s_i\,(1-s_i)\iint_{\Omega_i\times(\R^n\setminus\Omega_i)}
\frac{|u(x)-u(y)|^2}{|x-y|^{n+2s_i}}\,dx\,dy,\end{eqnarray*}
and the infimum in~\eqref{89RFUSjKaA}
is taken under the conditions
that~$\| u\|_{L^2(\R^n)}=1$ and~$u=0$ a.e. outside~$\Omega$.
In this setting, we obtain a result similar to Theorem~\ref{COP}
for the transmission problem in~\eqref{TP}:

\begin{theorem}\label{COP:TP}
In the setting above,
\begin{itemize}
\item if $\sup_{\Omega}\sigma\le\lambda_\star(\Omega)$
then the only solution of~\eqref{EQ:TP}
is the one identically zero;
\item if
$\inf_{\Omega}\sigma\ge\lambda_\star(\Omega)$ with strict 
inequality on a set of positive
measure then~\eqref{EQ:TP}
possesses a solution~$u$ such that~$u>0$ in~$\Omega_1\cup\Omega_2$.
\end{itemize}
\end{theorem}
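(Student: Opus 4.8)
The plan is to follow the variational scheme used for Theorem~\ref{COP}, with the Gagliardo energy of~$(-\Delta)^s$ replaced by the form~${\mathcal{T}}$ of~\eqref{TP} and the Dirichlet Rayleigh quotient replaced by~${\mathcal{T}}_o$. By ``solution of~\eqref{EQ:TP}'' I understand a nonnegative, finite-energy function that vanishes a.e.\ outside~$\Omega$ and solves~\eqref{EQ:TP} weakly; as recalled right after~\eqref{EQ:TP}, these are exactly the critical points of~${\mathcal{T}}$ over the admissible class of Theorem~\ref{TR:MOD:MI}.

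For the nonexistence alternative, I would test the weak formulation against the solution~$u$ itself. Since the quadratic part of~${\mathcal{T}}$ equals~$\frac12{\mathcal{T}}_o$, this produces
$$ {\mathcal{T}}_o(u)=\int_\Omega(\sigma-\mu u)\,u^2\,dx . $$
Using~$\mu\ge\mu_o>0$ and~$u\ge0$, the right-hand side is at most~$\int_\Omega\sigma u^2\,dx\le(\sup_\Omega\sigma)\,\|u\|_{L^2(\Omega)}^2\le\lambda_\star(\Omega)\,\|u\|_{L^2(\Omega)}^2$, whereas the definition~\eqref{89RFUSjKaA} of~$\lambda_\star(\Omega)$ gives~${\mathcal{T}}_o(u)\ge\lambda_\star(\Omega)\,\|u\|_{L^2(\Omega)}^2$. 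Hence all inequalities are equalities, so in particular~$\int_\Omega\mu u^3\,dx=0$, and since~$\mu\ge\mu_o>0$ we conclude~$u\equiv0$.

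For the existence alternative, I would minimize. By Theorem~\ref{TR:MOD:MI} there is a nonnegative minimizer~$u$ of~${\mathcal{T}}$ over the admissible class, and~$u$ solves~\eqref{EQ:TP}; it therefore suffices to show that~$u\not\equiv0$, for which it is enough to exhibit an admissible competitor~$w$ with~${\mathcal{T}}(w)<0={\mathcal{T}}(0)$. I would take~$w=t\phi$ for small~$t>0$, where~$\phi$ is a first eigenfunction for~$\lambda_\star(\Omega)$, i.e.\ an admissible function with~$\|\phi\|_{L^2(\Omega)}=1$ and~${\mathcal{T}}_o(\phi)=\lambda_\star(\Omega)$. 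The existence of such~$\phi$ comes from the compactness of the embedding of the energy space into~$L^2(\Omega)$ (a Rellich-type argument in~$\Omega_1$, where~$u$ vanishes near~$\partial\Omega_1$, and a fractional Rellich-type argument in~$\Omega_2$), and~$\phi$ may be taken nonnegative and bounded; moreover, arguing as in the proof of Theorem~\ref{COP} through the strong maximum principle for the operator in~\eqref{EQ:TP}, one has~$\phi>0$ a.e.\ in~$\Omega$. Then
$$ {\mathcal{T}}(t\phi)=\frac{t^2}{2}\Big(\lambda_\star(\Omega)-\int_\Omega\sigma\phi^2\,dx\Big)+\frac{t^3}{3}\int_\Omega\mu\phi^3\,dx , $$
and, since~$\sigma\ge\lambda_\star(\Omega)$ a.e.\ in~$\Omega$ with strict inequality on a set of positive measure and~$\phi>0$ a.e.\ in~$\Omega$, one gets~$\int_\Omega\sigma\phi^2\,dx>\lambda_\star(\Omega)$. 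Thus the coefficient of~$t^2$ is strictly negative, so~${\mathcal{T}}(t\phi)<0$ for~$t>0$ small and~$u\not\equiv0$.

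It remains to upgrade this to~$u>0$ in~$\Omega_1\cup\Omega_2$, and I expect this to be the main obstacle, together with the positivity of~$\phi$ used above. The idea is again a strong maximum principle: in~$\Omega_1$ one rewrites the nonlocal tail terms of~\eqref{EQ:TP} as a nonnegative source plus a bounded zero-order term (recalling that~$u$ is bounded, by the regularity results quoted after~\eqref{EQ:TP}, in the spirit of Lemma~\ref{easy}), so that the classical strong maximum principle for~$-\Delta$ applies; in~$\Omega_2$ one does the analogue for the regional fractional Laplacian; and positivity then propagates between the two blocks because the transmission kernels~$|x-y|^{-n-2s_i}$ are everywhere positive. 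The delicate points are the behavior near the interfaces, where the regional fractional Laplacian on~$\Omega_2$ and the nonlocal tails on~$\Omega_1$ interact, and making the propagation argument across disjoint components rigorous; this is precisely the part that should be carried out exactly as, and reduced to, the corresponding positivity argument in the proof of Theorem~\ref{COP}.
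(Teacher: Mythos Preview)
Your approach matches the paper's: the nonexistence part (test the weak formulation against~$u$ and squeeze against~$\lambda_\star(\Omega)$) is identical, and for existence you also show that~$0$ is not a minimizer via a small multiple of the first eigenfunction, then upgrade to strict positivity by a strong maximum principle. The block-by-block maximum principle you sketch at the end is exactly the content of the paper's Lemma~\ref{MP:TP}, which the paper proves separately and applies both to the eigenfunction~$e_\star$ and to the minimizer.

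There is one technical point where the paper proceeds differently, and it is worth noting. You assert that the first eigenfunction~$\phi$ can be taken bounded, so that~$\int_\Omega\mu\phi^3<\infty$ and the cubic term is under control. For the mixed local/nonlocal form~${\mathcal{T}}_o$ this $L^\infty$~bound is not immediate from the variational construction and would require invoking regularity theory for the transmission operator. The paper sidesteps the issue entirely: it uses as competitor the truncation~$e_M:=\min\{e_\star,M\}$. Since truncation by a constant does not increase any of the gradient or Gagliardo pieces in~${\mathcal{T}}_o$, one has~${\mathcal{T}}_o(e_M)\le{\mathcal{T}}_o(e_\star)=\lambda_\star(\Omega)$; on the other hand, by Fatou (or monotone convergence), $\int_\Omega\sigma e_M^2\,dx$ approaches~$\int_\Omega\sigma e_\star^2\,dx>\lambda_\star(\Omega)$ as~$M\to\infty$, so for~$M$ fixed large enough the quadratic balance is still strictly favorable and~${\mathcal{T}}(\epsilon e_M)<0$ for small~$\epsilon$, with the cubic term now trivially finite because~$|e_M|\le M$. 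This truncation trick is the only substantive device in the paper's proof that your proposal does not contain.
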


\subsection{Organization of the paper}

The rest of the paper is organized as follows:
in Section~\ref{SE:2} we discuss the existence of a solution
by energy minimization and we prove Theorem~\ref{EX56AJJ}.

Then, in Section~\ref{SA:3}, we discuss the
qualitative properties of the solution
and we present a proof of Theorem~\ref{COP}.

In Sections~\ref{SA:4}, \ref{SA:5} and~\ref{SA:6}
we discuss how the population adapts to the resources
and we give the proof of
Theorem~\ref{CONG}, Proposition~\ref{NONAM},
Theorem~\ref{EXT}, Lemma~\ref{easy} and Theorem~\ref{abundance}.

The strongly nonlocal diffusive strategy is considered
in Section~\ref{SA:6:BIS},
where we prove Theorem~\ref{BOVI}.

The case in which the population actually beats the resource is
discussed in Section~\ref{SA:7}, where Theorem~\ref{BEAT0} is proved.

The existence/nonexistence of nontrivial periodic solutions
in a periodic environment is taken into account
in Section~\ref{SA:PP} with the
proofs of Theorems~\ref{0o56:TH} and~\ref{0o56:TH:2}.

Then, in Section~\ref{TRA:SEC}, we consider
the transmission problem and we prove Theorems~\ref{TR:MOD:MI}
and~\ref{COP:TP}.

\section{Existence theory and proof of Theorem~\ref{EX56AJJ}}\label{SE:2}

The proof of Theorem~\ref{EX56AJJ} is based on a minimization
argument.
More precisely, in order to deal with problem~\eqref{EQ}, 
if~$s\in(0,1)$, given~$u\in L^1_{\rm loc}(\R^n)$ with~$u=0$
a.e. outside~$\Omega$,
we consider the energy functional
$$ {\mathcal{E}}(u):=\frac{s\,(1-s)}{2} \iint_{Q_\Omega}\frac{|u(x)-u(y)|^2}{
|x-y|^{n+2s}}\,dx\,dy
+\int_\Omega \frac{\mu\,|u|^3}{3}
-\frac{\sigma\,u^2}{2} - \frac{\tau\,u\,(J*u)}{2} \,dx,$$
where~$Q_\Omega$ is defined in~\eqref{4bis}. 

When~$s=1$, instead we consider the standard energy functional
$$ {\mathcal{E}}(u):=\frac{c_\star}{2} \int_\Omega\frac{|\nabla u|^2}{2}
+\frac{\mu\,|u|^3}{3}
-\frac{\sigma\,u^2}{2}- \frac{\tau\,u\,(J*u)}{2}\,dx,$$
with condition~$u\in H^1_0(\Omega)$.

It is worth to point out that solutions of~\eqref{EQ}
are strictly positive, unless they vanish identically:

\begin{lemma}\label{positive}
Let~$u$ be
a nonnegative solution of~$
(-\Delta)^{s} u =
(\sigma - \mu u)\,u +\tau (J*u)$ in~$\Omega$.
Then either~$u>0$ in~$\Omega$ or it vanishes identically.
\end{lemma}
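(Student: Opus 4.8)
The plan is to establish a strong maximum principle for the equation $(-\Delta)^s u = (\sigma-\mu u)u + \tau(J*u)$, exploiting that all the lower-order terms cooperate with nonnegativity. First I would observe that, since $u\ge 0$ is a nonnegative solution, we may rewrite the equation in the form $(-\Delta)^s u + c(x)\,u = f(x)$ in $\Omega$, where $c(x):=\mu(x)u(x)-\sigma(x)$ is bounded and $f(x):=\tau(J*u)(x)\ge 0$ because $\tau\ge 0$, $J\ge 0$ and $u\ge 0$. Thus $u$ is a nonnegative supersolution of $(-\Delta)^s u + c(x) u = 0$ in $\Omega$. The key structural point is that the convolution term only helps: it is a nonnegative zeroth-order forcing term, so it does not interfere with the argument.

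Next I would invoke the strong maximum principle for the fractional Laplacian (for $s\in(0,1)$) in the following form: if $u\ge 0$ in $\R^n$, $u$ solves $(-\Delta)^s u + c(x) u \ge 0$ in $\Omega$ with $c\in L^\infty(\Omega)$, and $u(x_0)=0$ at some interior point $x_0\in\Omega$, then $u\equiv 0$ in all of $\R^n$. The mechanism is the standard one: at a zero interior minimum $x_0$, the defining singular integral gives
$$(-\Delta)^s u(x_0) = 2s(1-s)\,PV\!\int_{\R^n}\frac{u(x_0)-u(y)}{|x_0-y|^{n+2s}}\,dy = -2s(1-s)\int_{\R^n}\frac{u(y)}{|x_0-y|^{n+2s}}\,dy \le 0,$$
while the equation at $x_0$ forces $(-\Delta)^s u(x_0) = -c(x_0)u(x_0) + \tau(J*u)(x_0) = \tau(J*u)(x_0)\ge 0$. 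Hence both sides vanish, which (since the integrand $u(y)/|x_0-y|^{n+2s}$ is nonnegative) forces $u\equiv 0$ a.e.\ in $\R^n$. For the local case $s=1$ one uses instead Hopf's lemma together with the classical strong maximum principle applied to $-\Delta u + c(x)u = f\ge 0$. Either way, the dichotomy follows: either $u$ never vanishes in $\Omega$, i.e.\ $u>0$ in $\Omega$, or $u\equiv 0$.

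The main obstacle is regularity/rigor rather than the heuristic: one must make sure the pointwise evaluation of the singular integral at $x_0$ is justified, since a priori $u$ is only an energy (weak) solution. I would handle this either by quoting known regularity theory for the fractional Laplacian (the right-hand side $(\sigma-\mu u)u + \tau(J*u)$ is bounded once one knows $u\in L^\infty$, which can be bootstrapped or taken from Lemma~\ref{easy}-type bounds, giving $u\in C^{2s+\alpha}_{\rm loc}$ hence $C^2$ where $2s<2$), or by running the strong maximum principle directly at the level of weak/viscosity supersolutions, for which a nonlocal version is available in the literature. A subtle but harmless point: the term $-c(x)u$ has the "wrong" sign only where $\sigma>\mu u$, but this is precisely a bounded zeroth-order coefficient and does not affect the strong maximum principle, which is insensitive to the sign of $c$ as long as $c\in L^\infty$. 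Once the pointwise argument is legitimate, the conclusion is immediate.
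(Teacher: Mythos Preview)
Your argument is correct and is essentially the paper's own proof: assume $u(x_0)=0$ at some interior point, observe that the equation there gives $(-\Delta)^s u(x_0)=\tau(J*u)(x_0)\ge 0$, while the integral definition of $(-\Delta)^s$ forces $(-\Delta)^s u(x_0)\le 0$ with strict inequality unless $u\equiv 0$, and conclude. The rewriting via an auxiliary potential $c(x)$ is unnecessary dressing (the paper proceeds directly), and your discussion of regularity to justify the pointwise evaluation is more careful than the paper, which tacitly treats $u$ as a classical solution.
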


\begin{proof}
Suppose that~$u(z)=0$
for some~$z\in\Omega$ and, by contradiction, that~$u>0$
in a set of positive measure.
Then~$u(z+x)-u(z)=u(z+x)\ge0$ for any~$x\in\R^n$,
and in fact strictly positive in a set of positive measure.
Accordingly, $(-\Delta)^s u(z)<0$. Nevertheless, from~\eqref{EQ},
we have that
$$ (-\Delta)^{s} u (z)= (\sigma(z) - \mu(z) u(z))\,u(z) +\tau (J*u)(z)=\tau (J*u)(z)\ge 0,$$
which is a contradiction.
\end{proof}

Equation~\eqref{EQ} has a variational structure, according to the following
observation:

\begin{lemma}\label{90}
The Euler-Lagrange 
equation associated to the energy functional~${\mathcal{E}}$ at
a nonnegative function~$u$ is~\eqref{EQ}.
\end{lemma}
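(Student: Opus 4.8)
The plan is to compute the first variation of $\mathcal{E}$ along a perturbation and check that its vanishing is exactly the weak formulation of~\eqref{EQ}. Concretely, I would fix a nonnegative~$u$ lying in the natural energy space (so that the Gagliardo seminorm over~$Q_\Omega$ is finite and $u=0$ a.e.\ outside~$\Omega$, or $u\in H^1_0(\Omega)$ when $s=1$), take a test function~$\varphi$ in the same space, and consider the scalar function $t\mapsto \mathcal{E}(u+t\varphi)$. Differentiating each of the three bulk terms is routine: the quadratic Gagliardo term produces $s(1-s)\iint_{Q_\Omega}\frac{(u(x)-u(y))(\varphi(x)-\varphi(y))}{|x-y|^{n+2s}}\,dx\,dy$, the $\sigma u^2/2$ term produces $-\int_\Omega \sigma u\varphi$, and the cubic term, which I would first rewrite as $\mu u^3/3$ on the set where $u\ge 0$ (invoking nonnegativity of~$u$ so that $|u|^3 = u^3$ near~$u$, and noting that the relevant one-sided derivative still gives $\int_\Omega \mu u^2\varphi$ for admissible~$\varphi$), produces $\int_\Omega \mu u^2 \varphi$. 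For the convolution term I would use the symmetry~\eqref{symm0990} of~$J$ together with Fubini to see that $\frac{d}{dt}\Big|_{t=0}\int_\Omega \frac{\tau (u+t\varphi)(J*(u+t\varphi))}{2}\,dx = \tau\int_\Omega \varphi\,(J*u)\,dx$, the factor~$\tfrac12$ being absorbed because $\varphi$ contributes symmetrically to both factors.

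Collecting terms, the Euler–Lagrange identity reads
\begin{equation*}
s(1-s)\iint_{Q_\Omega}\frac{(u(x)-u(y))(\varphi(x)-\varphi(y))}{|x-y|^{n+2s}}\,dx\,dy
= \int_\Omega\bigl(\sigma u - \mu u^2 + \tau (J*u)\bigr)\varphi\,dx
\end{equation*}
for all admissible~$\varphi$, which is precisely the weak formulation of $(-\Delta)^s u = (\sigma-\mu u)u + \tau(J*u)$ in~$\Omega$ with the exterior condition $u=0$ encoded in the choice of function space; the $s=1$ case is the classical computation with $\int_\Omega \nabla u\cdot\nabla\varphi$ in place of the Gagliardo bilinear form. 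I would then remark that the sign condition $u\ge 0$ is not part of the Euler–Lagrange equation itself but is handled separately (e.g.\ minimizers turn out nonnegative, and Lemma~\ref{positive} upgrades this), so the lemma as stated is just the identification of the critical-point condition.

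The main technical point to be careful about is the differentiation under the integral sign and the finiteness of the bilinear forms: one must check that for $u,\varphi$ in the energy space the mixed Gagliardo integral $\iint_{Q_\Omega}\frac{|u(x)-u(y)|\,|\varphi(x)-\varphi(y)|}{|x-y|^{n+2s}}$ is finite (Cauchy–Schwarz against the two seminorms), that the integrability hypotheses on~$\sigma$ from Theorem~\ref{EX56AJJ} (namely $\sigma\in L^m$) together with the Sobolev embedding guarantee $\int_\Omega \sigma u\varphi$ is finite, that $(\sigma+\tau)^3\mu^{-2}\in L^1$ controls the cubic term, and that $J\in L^1$ with~\eqref{unoint} makes $J*u$ well-defined in the appropriate Lebesgue space via Young's inequality. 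None of this is deep, but it is the part that needs the stated hypotheses; once it is in place, dominated convergence justifies passing the $t$-derivative inside, and the computation closes. A minor subtlety worth a sentence is the one-sided nature of the derivative of $|u|^3$ where $u$ vanishes, which is why we only claim that~\eqref{EQ} is the Euler–Lagrange equation at a nonnegative~$u$ and why Lemma~\ref{positive} is invoked to know the relevant solutions are in fact strictly positive there.
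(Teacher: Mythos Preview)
Your proposal is correct and follows essentially the same route as the paper: compute the first variation term by term, use the evenness of~$J$ together with Fubini to symmetrize the convolution contribution into~$\tau\int_\Omega \varphi\,(J*u)$, and identify the result with the weak form of~\eqref{EQ}. The paper's version is slightly more streamlined in that it restricts to test functions~$\phi\in C^\infty_0(\Omega)$ (which lets one replace~$Q_\Omega$ by~$\R^{2n}$ in the Gagliardo bilinear form without further comment) and skips the dominated-convergence and integrability discussion you outline, but the argument is the same.
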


\begin{proof} We denote by
$$\mathcal{J}(u):=\int_{\Omega}\frac{\tau\,u\,(J*u)}{2}\,dx.$$ 
If~$\phi\in C^\infty_0(\Omega)$ and~$\epsilon\in(-1,1)$, we have that 
\begin{eqnarray*}
&&\mathcal{J}(u+\epsilon\phi)\\&=&\frac{\tau}{2}\int_{\Omega}(u+\epsilon\phi)(x) \big( J*(u+\epsilon\phi)\big)(x)\,dx
\\&=& \frac{\tau}{2}\int_{\Omega}u(x)(J*u)(x) +\epsilon \Big[ (u(x)(J*\phi)(x)+\phi(x)(J*u)(x)\Big] 
+ \epsilon^2 \phi(x)(J*\phi)(x)\,dx.
\end{eqnarray*}
As a consequence, 
\begin{equation}\label{kkgjtrekhjtr}
\frac{d\mathcal{J}}{d\epsilon}(u+\epsilon\phi)\Big|_{\epsilon=0} = \frac{\tau}{2}\int_{\Omega} 
\left(u(x)(J*\phi)(x)+\phi(x)(J*u)(x)\right)\, dx.
\end{equation}
Now we recall that $u$ and $\phi$ vanish outside~$\Omega$ and we use~\eqref{symm0990} to see that 
\begin{eqnarray*}
&& \int_{\Omega} u(x)(J*\phi)(x)\,dx = \int_{\R^n} u(x)\left( \int_{\R^n}J(x-y)\phi(y)\,dy\right)\,dx \\
&&\qquad = \int_{\R^n}\phi(y) \left(\int_{\R^n}J(x-y)u(x)\,dx\right)\,dy 
= \int_{\R^n}\phi(y) \left(\int_{\R^n}J(y-x)u(x)\,dx\right)\,dy \\
&&\qquad = \int_{\R^n}\phi(y) (J*u)(y)\,dy
=  \int_{\Omega}\phi(y) (J*u)(y)\,dy. 
\end{eqnarray*}
Using this into~\eqref{kkgjtrekhjtr} we obtain that 
$$ \frac{d\mathcal{J}}{d\epsilon}(u+\epsilon\phi)\Big|_{\epsilon=0} = \tau\int_{\Omega}\phi(x)(J*u)(x)\,dx.$$ 

With this, the case~$s=1$ is standard, so we consider the case~$s\in (0,1)$.
If~$\phi\in C^\infty_0(\Omega)$,
we have
$$ \iint_{Q_\Omega}\frac{\big(u(x)-u(y)\big)\big(\phi(x)-\phi(y)\big)}{
|x-y|^{n+2s}}\,dx\,dy =
\iint_{\R^{2n}}\frac{\big(u(x)-u(y)\big)\big(\phi(x)-\phi(y)\big)}{
|x-y|^{n+2s}}\,dx\,dy,$$
which gives the desired result.
\end{proof}

In the light of Lemma~\ref{90},
to prove existence of solutions, it is useful to look at the minimizing
problem for~${\mathcal{E}}$. 
We first show the following useful inequality: 

\begin{lemma}\label{young}
Let $v$, $w\in L^2(\Omega)$ with~$v=0=w$ a.e. outside~$\Omega$. Then 
\begin{equation}\label{usodopo}
\int_{\Omega}v(x)(J*w)(x)\,dx \le \|v\|_{L^2(\Omega)}\|w\|_{L^2(\Omega)}.
\end{equation}
\end{lemma}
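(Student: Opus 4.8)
The plan is to prove the inequality \eqref{usodopo} as a straightforward consequence of Young's convolution inequality together with the normalization \eqref{unoint} of the kernel. First I would observe that, since $v$ and $w$ vanish a.e.\ outside $\Omega$, one has $\int_\Omega v\,(J*w)\,dx = \int_{\R^n} v\,(J*w)\,dx$, so there is no loss in working on all of $\R^n$. Then I would bound $\int_{\R^n} v\,(J*w)\,dx \le \|v\|_{L^2(\R^n)}\,\|J*w\|_{L^2(\R^n)}$ by the Cauchy--Schwarz inequality, and control the convolution term using Young's inequality $\|J*w\|_{L^2(\R^n)} \le \|J\|_{L^1(\R^n)}\,\|w\|_{L^2(\R^n)}$. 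Since \eqref{unoint} gives $\|J\|_{L^1(\R^n)} = \int_{\R^n} J\,dx = 1$ (recalling that $J\ge0$), this yields precisely \eqref{usodopo}.

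Alternatively, to keep the argument self-contained I would give a direct proof of the relevant case of Young's inequality. Writing $J = J^{1/2}\cdot J^{1/2}$ (using $J\ge0$), one estimates pointwise, by Cauchy--Schwarz in the $y$ variable,
\[
|(J*w)(x)| \le \int_{\R^n} J(x-y)^{1/2}\,J(x-y)^{1/2}\,|w(y)|\,dy
\le \left(\int_{\R^n} J(x-y)\,dy\right)^{1/2}\left(\int_{\R^n} J(x-y)\,|w(y)|^2\,dy\right)^{1/2},
\]
and the first factor equals $1$ by \eqref{unoint}. Squaring, integrating in $x$, and using Tonelli's theorem to swap the order of integration gives $\|J*w\|_{L^2(\R^n)}^2 \le \int_{\R^n}|w(y)|^2\left(\int_{\R^n}J(x-y)\,dx\right)dy = \|w\|_{L^2(\R^n)}^2$, where again \eqref{unoint} was used. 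Combining this with Cauchy--Schwarz as above completes the proof.

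There is no real obstacle here: the statement is a packaging of classical convolution estimates, and the only points requiring (minimal) care are the reduction from $\Omega$ to $\R^n$ using that $v,w$ vanish outside $\Omega$, and the use of $J\ge0$ together with \eqref{unoint} to identify $\|J\|_{L^1}=1$. The symmetry assumption \eqref{symm0990} is not needed for this particular lemma, though it is used elsewhere in the variational analysis. I would present the short self-contained version above, since it avoids citing Young's inequality in full generality and makes transparent which hypotheses on $J$ are actually in play.
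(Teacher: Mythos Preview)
Your proposal is correct and follows essentially the same route as the paper: apply Cauchy--Schwarz (H\"older with exponent~$2$) to bound $\int_\Omega v\,(J*w)\,dx$ by $\|v\|_{L^2}\|J*w\|_{L^2}$, then invoke Young's convolution inequality together with $\|J\|_{L^1}=1$ from~\eqref{unoint}. Your additional self-contained derivation of the convolution estimate and your remark that~\eqref{symm0990} is not needed here are accurate extras, but the core argument coincides with the paper's.
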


\begin{proof}
By the H\"older Inequality with exponents equal to~2 and the Young Inequality 
for convolutions with exponents~1 and~2, we have that
\begin{eqnarray*}&&
\int_{\Omega}v(x)(J*w)(x)\,dx \le \|v\|_{L^2(\Omega)} \|J*w\|_{L^2(\R^n)} 
\\&&\qquad\qquad\le \|v\|_{L^2(\Omega)} \|J\|_{L^1(\R^n)}\|w\|_{L^2(\Omega)} = \|v\|_{L^2(\Omega)}\|w\|_{L^2(\Omega)}, 
\end{eqnarray*}
where~\eqref{unoint} was also used. This shows \eqref{usodopo}. 
\end{proof}

Then the following existence result holds:

\begin{proposition}\label{existence}
Let~$\Omega$ be a bounded Lipschitz domain.

Assume that~$\sigma\in L^m(\Omega)$,
for some~$m\in (2^*_s/(2^*_s-2),+\infty]$,
and that~$(\sigma+\tau)^3\mu^{-2}\in L^1(\Omega)$. 
Let also
$$ p:=\frac{2}{1-\frac{1}{m}}.$$
Then~${\mathcal{E}}$ attains its minimum among the functions~$u\in L^p(\Omega)$
for which
$$ \iint_{Q_\Omega}\frac{|u(x)-u(y)|^2}{
|x-y|^{n+2s}}\,dx\,dy<+\infty$$
and such that~$u=0$ a.e. outside~$\Omega$.

Moreover, there exists a nonnegative minimizer.
Finally, if~$u$ is such minimizer, it is a solution of~\eqref{EQ}.
\end{proposition}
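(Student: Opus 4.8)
\medskip

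The plan is to run the direct method of the calculus of variations, in four steps: (i) show that~$\mathcal{E}$ is bounded below and coercive on the admissible class; (ii) extract a minimizing sequence and pass to the limit by compactness and lower semicontinuity; (iii) pass to the absolute value to obtain a nonnegative minimizer; (iv) read off the Euler--Lagrange equation from Lemma~\ref{90}. The heart of step~(i) is to absorb the sign-indefinite lower order terms into the two coercive contributions, the Gagliardo seminorm and~$\int_\Omega\mu|u|^3/3$. Since~$J\ge0$, one has~$u(x)\,J(x-y)\,u(y)\le|u(x)|\,J(x-y)\,|u(y)|$, so Lemma~\ref{young} applied to~$|u|$ gives~$\int_\Omega u\,(J*u)\,dx\le\|u\|_{L^2(\Omega)}^2$, whence, after integration, $-\tfrac\sigma2 u^2-\tfrac\tau2 u(J*u)\ge-\tfrac12(\sigma+\tau)|u|^2$. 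Then I write~$(\sigma+\tau)|u|^2=\big[(\sigma+\tau)\mu^{-2/3}\big]\cdot\big[\mu^{2/3}|u|^2\big]$ and apply Young's inequality with exponents~$3$ and~$3/2$ (the exponent~$3$ matched to the cubic term, the resulting weight~$(\sigma+\tau)^3\mu^{-2}$ being precisely the function assumed in~$L^1(\Omega)$), to get, for every~$\eta\in(0,1)$,
\[ (\sigma+\tau)\,|u|^2\;\le\;\frac{2\eta^{3/2}}{3}\,\mu|u|^3+\frac{1}{3\eta^{3}}\,(\sigma+\tau)^3\mu^{-2}. \]
Choosing~$\eta$ small enough that~$\eta^{3/2}/3<1/3$, this yields~$\mathcal{E}(u)\ge\tfrac{s(1-s)}{2}\iint_{Q_\Omega}\frac{|u(x)-u(y)|^2}{|x-y|^{n+2s}}\,dx\,dy+c\int_\Omega\mu|u|^3\,dx-C$, with~$c>0$ and~$C<\infty$ thanks to the~$L^1$ hypothesis; since~$\mathcal{E}(0)=0$, the infimum is finite and negative, and along any minimizing sequence the Gagliardo seminorm (and~$\int_\Omega\mu|u|^3$) stays bounded. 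The case~$s=1$ is identical, with~$H^1_0(\Omega)$ and the Dirichlet integral in place of the seminorm.

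For step~(ii), let~$(u_k)$ be a minimizing sequence. By step~(i) the seminorms~$[u_k]$ are bounded, so the fractional Sobolev inequality bounds~$\|u_k\|_{L^{2^*_s}(\Omega)}$ in terms of~$[u_k]$; since~$\Omega$ is bounded and the condition~$m>2^*_s/(2^*_s-2)$ is equivalent to~$p<2^*_s$ (equivalently to~$\tfrac1m+\tfrac2p=1$), the~$u_k$ are bounded in~$L^p(\Omega)$, hence in~$L^2(\Omega)$. Up to a subsequence, the compact fractional Sobolev embedding gives~$u_k\to u$ strongly in~$L^p(\Omega)$ and in~$L^2(\Omega)$ and a.e.\ in~$\R^n$; in particular~$u=0$ a.e.\ outside~$\Omega$, and~$[u]\le\liminf[u_k]<\infty$ by Fatou, so~$u$ is admissible. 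Now~$\mathcal{E}$ is lower semicontinuous along~$(u_k)$: the Gagliardo term and~$\int_\Omega\mu|u|^3$ are l.s.c.\ by Fatou (a.e.\ convergence and~$\mu\ge0$); $\int_\Omega\sigma u^2$ is continuous because~$\sigma\in L^m$ and~$u_k\to u$ in~$L^p$ with~$\tfrac1m+\tfrac2p=1$; and~$\int_\Omega u(J*u)$ is continuous because~$J*u_k\to J*u$ in~$L^2(\Omega)$ by Young's convolution inequality together with~$u_k\to u$ in~$L^2(\Omega)$. Hence~$\mathcal{E}(u)\le\liminf\mathcal{E}(u_k)=\inf\mathcal{E}$, so~$u$ is a minimizer.

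For step~(iii), the function~$|u|$ is admissible, $\big||u|(x)-|u|(y)\big|\le|u(x)-u(y)|$ so the Gagliardo term does not increase, $\int_\Omega\mu|u|^3$ and~$\int_\Omega\sigma u^2$ are unchanged, and~$\int_\Omega u(J*u)\le\int_\Omega|u|(J*|u|)$ since~$J\ge0$, so~$-\tfrac\tau2\int_\Omega u(J*u)$ does not increase; thus~$\mathcal{E}(|u|)\le\mathcal{E}(u)$, and~$|u|$ is a nonnegative minimizer. For step~(iv), given such a nonnegative minimizer~$u$ and~$\phi\in C_0^\infty(\Omega)$, the function~$u+\epsilon\phi$ is admissible for~$|\epsilon|$ small, so~$\epsilon\mapsto\mathcal{E}(u+\epsilon\phi)$ is minimized at~$\epsilon=0$; differentiating under the integral sign — licit by dominated convergence, the difference quotient of the cubic term being controlled by~$C(|u|^2+1)|\phi|$ with~$u\in L^2(\Omega)$ and~$\int_\Omega\mu|u|^3<\infty$, and the remaining terms being polynomial or handled as in Lemma~\ref{90} — the vanishing of this derivative is exactly the weak formulation of~$(-\Delta)^su=(\sigma-\mu u)u+\tau(J*u)$ in~$\Omega$. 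Together with~$u\ge0$ and~$u=0$ outside~$\Omega$, this is precisely~\eqref{EQ}.

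I expect the main obstacle to be step~(i): identifying the Young-type splitting so that the sub-quadratic term is dominated precisely by~$\int_\Omega\mu|u|^3$ plus a multiple of~$(\sigma+\tau)^3\mu^{-2}\in L^1(\Omega)$, and then using that the exponent condition~$p<2^*_s$ simultaneously makes the negative quadratic term absorbable in the coercivity estimate and continuous under the~$L^p$-convergence furnished by the compact embedding. Once this is in place, steps~(ii)--(iv) are routine lower-semicontinuity and first-variation arguments.
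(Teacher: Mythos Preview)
Your proof is correct and follows essentially the same route as the paper's: the same Young-inequality absorption of~$(\sigma+\tau)|u|^2$ into~$\mu|u|^3/3$ with remainder~$(\sigma+\tau)^3\mu^{-2}\in L^1(\Omega)$, the same convolution bound via Lemma~\ref{young}, the same compact embedding into~$L^p$ (using~$p<2^*_s$) combined with Fatou for the seminorm and cubic term and H\"older continuity for~$\int_\Omega\sigma u^2$, and the same appeal to Lemma~\ref{90} for the Euler--Lagrange equation. One inconsequential slip: the infimum need not be strictly negative (it is zero when the minimizer is trivial), but you only use that it is~$\le 0$.
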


\begin{proof} We deal with the case~$s\in(0,1)$, since the case~$s=1$
is similar, and simpler. The proof is by direct methods. 
First, we 
notice that~$p\in [2,2^*_s)$ and
\begin{equation}\label{DP}
\frac{2}{p}+\frac{1}{m}=1.
\end{equation}
By \eqref{usodopo} (used here with $v:=u$ and $w:=u$) we have that
\begin{equation}\label{lakdetpieryr}
\int_{\Omega}\frac{\tau\,u\,(J*u)}{2}\,dx \le \frac{\tau}{2}\int_{\Omega}|u|^2\,dx.
\end{equation}
Furthermore, we use the Young Inequality, with exponents~$3/2$ and~$3$,
to see that
\begin{equation}\label{7bis}
\frac{(\sigma+\tau)\,u^2}{2} =
\frac{\mu^{2/3}|u|^2}{2^{2/3}}
\cdot \frac{\sigma+\tau}{2^{1/3} \mu^{2/3}}\le
\frac{\mu\,|u|^3}{3}+\frac{(\sigma+\tau)^3}{6 \mu^2}.\end{equation}
As a consequence of this and~\eqref{lakdetpieryr},
$$ \int_{\Omega}\frac{\mu\,|u|^3}{3}-\frac{\sigma\,u^2}{2}-\frac{\tau\,u\,(J*u)}{2}\,dx 
\ge - \int_{\Omega} \frac{(\sigma+\tau)^3}{6\mu^2}\,dx.$$ 
This implies that
$$ {\mathcal{E}}(u)\ge\frac{s\,(1-s)}{2} \iint_{Q_\Omega}\frac{|u(x)-u(y)|^2}{
|x-y|^{n+2s}}\,dx\,dy-\kappa,$$
for~$\kappa:=\|(\sigma+\tau)^3\mu^{-2}\|_{L^1(\Omega)}/6$. So we can take
a minimizing sequence~$u_j$. We may suppose that
$$ 0={\mathcal{E}}(0)\ge{\mathcal{E}}(u_j)\ge 
\frac{s\,(1-s)}{2} \iint_{Q_\Omega}\frac{|u_j(x)-u_j(y)|^2}{
|x-y|^{n+2s}}\,dx\,dy-\kappa.$$
So we set
$$ \| u_j\|:=\sqrt{ s\,(1-s)
\iint_{Q_\Omega}\frac{|u_j(x)-u_j(y)|^2}{
|x-y|^{n+2s}}\,dx\,dy }.$$
We obtain that
$$ \sqrt{s\,(1-s)
\iint_{\R^{2n}}\frac{|u_j(x)-u_j(y)|^2}{
|x-y|^{n+2s}}\,dx\,dy}= \| u_j\| \le \sqrt{2\kappa}.$$
Hence, by compactness, up to a subsequence~$u_j$ converges to some~$u$
in~$L^p(\Omega)$ and a.e. in~$\R^n$. So we recall~\eqref{DP}
and we find that
\begin{eqnarray*}&&
\limsup_{j\to+\infty} \int_\Omega \sigma(u_j^2-u^2)\,dx=
\limsup_{j\to+\infty} \int_\Omega \sigma(u_j+u)(u_j-u)\,dx\\ &&\qquad\qquad\le
\limsup_{j\to+\infty} \|\sigma\|_{L^m(\Omega)}
\|u_j+u\|_{L^p(\Omega)}
\|u_j-u\|_{L^p(\Omega)}=0.\end{eqnarray*}
Furthermore, 
\begin{equation}\label{asrewrqrwqerq}
\int_{\Omega}\big( u_j\,(J*u_j)-u\,(J*u) \big)\,dx = \int_{\Omega} (u_j-u)\,(J*u_j)\,dx +\int_{\Omega}(J*u_j-J*u)\,u\,dx.
\end{equation}
Now, by~\eqref{usodopo} with~$v:=u_j-u$ and~$w:=u_j$ we obtain 
\begin{equation}\label{8bis}
\limsup_{j\to+\infty} \int_{\Omega} (u_j-u)\,(J*u_j)\,dx \le 
\limsup_{j\to+\infty} \|u_j-u\|_{L^2(\Omega)}\|u_j\|_{L^2(\Omega)}=0.\end{equation}
Moreover, making again use of~\eqref{usodopo} with~$v:=u$ and~$w:=u_j-u$, we have that 
\begin{equation}\label{8ter} \begin{split}&
\limsup_{j\to+\infty} \int_{\Omega}(J*u_j-J*u)\,u\,dx
= \limsup_{j\to+\infty} 
\int_{\Omega}\big( J*(u_j-u)\big)\,u\,dx \\&\qquad\qquad\le 
\limsup_{j\to+\infty} \|u_j-u\|_{L^2(\Omega)}\|u\|_{L^2(\Omega)}= 0.\end{split}\end{equation}
So, from~\eqref{asrewrqrwqerq}, \eqref{8bis} and~\eqref{8ter}, we conclude that 
\begin{eqnarray*}&&\limsup_{j\to+\infty}\int_{\Omega}\big( u_j\,(J*u_j)-u\,(J*u) \big)\,dx 
\\&&\qquad\le \limsup_{j\to+\infty}\int_{\Omega} (u_j-u)\,(J*u_j)\,dx + 
 \limsup_{j\to+\infty}\int_{\Omega}(J*u_j-J*u)\,u\,dx =0.\end{eqnarray*} 
Also,
\begin{eqnarray*}&&\liminf_{j\to+\infty}
\iint_{Q_\Omega}\frac{|u_j(x)-u_j(y)|^2}{
|x-y|^{n+2s}}\,dx\,dy \ge
\iint_{Q_\Omega}\frac{|u(x)-u(y)|^2}{
|x-y|^{n+2s}}\,dx\,dy,\\ {\mbox{and }}&&
\liminf_{j\to+\infty}
\int_\Omega
\frac{\mu\,|u_j|^3}{3}\,dx\ge\int_\Omega
\frac{\mu\,|u|^3}{3}\,dx,\end{eqnarray*}
thanks to the Fatou Lemma. These inequalities imply that
$$ \liminf_{j\to+\infty} 
{\mathcal{E}}(u_j)\ge{\mathcal{E}}(u),$$
hence $u$ is the desired minimum.

Also, ${\mathcal{E}}(|u|)\le{\mathcal{E}}(u)$, so we can suppose in addition
that~$u$ is nonnegative.
Furthermore, $u$ is a solution of~\eqref{EQ} thanks to Lemma~\ref{90}.
\end{proof}

The claim in Theorem~\ref{EX56AJJ} now follows directly from the one
in Proposition~\ref{existence}.

\section{Qualitative properties and proof of Theorem~\ref{COP}}\label{SA:3}

The proof of Theorem~\ref{COP} is based on energy arguments,
by using the functional introduced in Section~\ref{SE:2}.
The details are the following:

\begin{proof}[Proof of Theorem~\ref{COP}] Assume that~$\sup_{\Omega}\sigma+\tau\le \lambda_s(\Omega)$.
Suppose, by contradiction, that there exists a nontrivial solution to~\eqref{EQ}.
Then, by Lemma~\ref{positive}, we have that~$u>0$ in~$\Omega$.

We observe that
\begin{equation}\label{0x9cv}
{\mbox{$\mu$ cannot vanish identically:}} \end{equation}
otherwise,
since~$(\sigma+\tau)^3\mu^{-2}\in L^1(\Omega)$, we would have that both~$\sigma$ and $\tau$
vanish identically as well, thus~$(-\Delta)^s u$ would
vanish identically in~$\Omega$, which would imply that~$u$
vanishes identically.

Therefore, using Lemma~\ref{young} (with~$v:=u$ and~$w:=u$)
and recalling~\eqref{0x9cv}, we see that
\begin{equation}\begin{split}\label{min}
&\int_\Omega (\sigma-\mu u)\,u^2\,dx +\int_{\Omega}\tau (J*u)u\,dx 
\le \int_\Omega (\sigma+\tau-\mu u)\,u^2\,dx
\\&\qquad \le\lambda_s(\Omega)\int_\Omega u^2\,dx
-\int_\Omega \mu u^3\,dx
< \lambda_s(\Omega)\int_\Omega u^2\,dx.\end{split}\end{equation}
Now, we test~\eqref{EQ} against~$u$ itself and we use~\eqref{min} to
see that
\begin{eqnarray*}
&& \lambda_s(\Omega)\|u\|_{L^2(\Omega)}^2\le
s\,(1-s)
\iint_{\R^{2n}}\frac{|u(x)-u(y)|^2}{
|x-y|^{n+2s}}\,dx\,dy \\&&\qquad =
\int_\Omega (\sigma-\mu u)\,u^2\,dx 
+\int_\Omega \tau (J*u)u\,dx
< \lambda_s(\Omega)\|u\|_{L^2(\Omega)}^2. \end{eqnarray*}
This is a contradiction and it establishes the first claim in
Theorem~\ref{COP}.

Now we show the second claim. For this,
we suppose~$\inf_{\Omega}\sigma\ge\lambda_s(\Omega)$ with strict inequality on a set of positive
measure and we remark that
it is enough to show that~$0$
is not a minimizer. To this goal, we take~$e$ to be the first
eigenfunction of~$(-\Delta)^s$ with Dirichlet datum
and~$\epsilon>0$. We recall that~$e>0$ in~$\Omega$ and it is bounded.
Then
\begin{eqnarray*}&&
{\mathcal{E}} (\epsilon e)\\&=&
\frac{\epsilon^2}{2} \left[s\,(1-s)
\iint_{\R^{2n}}\frac{|e(x)-e(y)|^2}{|x-y|^{n+2s}}\,dx\,dy
-\int_\Omega\sigma e^2\,dx-\int_\Omega \tau(J*e)e\,dx\right]
+\frac{\epsilon^3
}{3}\int_\Omega \mu\,|e|^3\,dx
\\&\le & \frac{\epsilon^2}{2} 
\int_\Omega (\lambda_s(\Omega)-\sigma) e^2\,dx 
+\frac{\epsilon^3}{3}\int_\Omega \mu\,|e|^3\,dx\\
&\le& -c_1\epsilon^2+c_2\epsilon^3,
\end{eqnarray*}
where
$$ c_1:=\frac{1}{2}
\int_\Omega (\sigma -\lambda_s(\Omega)) e^2\,dx
\ {\mbox{ and }} \
c_2:=\frac13\,\|\mu\|_{L^1(\Omega)}\|e_o\|_{L^\infty(\Omega)}^3.$$
Notice that~$c_1\in(0,+\infty)$.
So, if~$\epsilon$ is small, ${\mathcal{E}}(\epsilon e)<0={\mathcal{E}}(0)$,
showing that~$0$ is not a minimizer, hence the minimizer of
Proposition~\ref{existence}
is positive in~$\Omega$ and it provides a positive solution.
\end{proof}

\section{Adaptation to sparse resources and proof
of Theorem~\ref{CONG}}\label{SA:4}

The proof of Theorem~\ref{CONG} is based on a spectral analysis
and on the use of Theorem~\ref{COP}. The details are the following.

\begin{proof}[Proof of Theorem~\ref{CONG}] Since the domains are congruent,
we have that~$\lambda_s(\Omega_1)=\lambda_s(\Omega_2)$.
We claim that
\begin{equation}\label{RO}
\lambda_s (\Omega_1\cup\Omega_2)<\lambda_s(\Omega_1)=\lambda_s(\Omega_2)
.\end{equation}
To prove this, we take~$e_i$ to be the first eigenfunction
of~$\Omega_i$, for $i\in\{1,2\}$, normalized in such a way that~$\|e_i\|_{L^2(\R^n)}=\|e_i\|_{L^2(\Omega_i)}=1$.
Let~$e:=e_1+e_2$. Then
\begin{equation}\label{N-e}
\|e\|_{L^2(\Omega_1\cup\Omega_2)}^2 =
\|e\|_{L^2(\R^n)}^2=
\|e_1\|_{L^2(\R^n)}^2+\|e_2\|_{L^2(\R^n)}^2+
2\int_{\R^n} e_1(x)\,e_2(x)\,dx=2,\end{equation}
since the supports of~$e_1$ and~$e_2$ are disjoint. 
On the other hand, we know that~$e_i>0$ in~$\Omega_i$ (see e.g. Corollary~8
in~\cite{weak}), therefore
$$ \int_{\Omega_1}\left(\int_{\Omega_2} \frac{e_1(x)\,e_2(y)}{|x-y|^{n+2s}}
\,dy\right)\,dx >0.$$
Also, since~$e$ vanishes outside~$\Omega_1\cup\Omega_2$, we have that
\begin{eqnarray*}
&& \iint_{Q_{\Omega_1\cup\Omega_2}} \frac{|e(x)-e(y)|^2}{|x-y|^{n+2s}}\,dx\,dy=
\iint_{\R^{2n}} \frac{|e(x)-e(y)|^2}{|x-y|^{n+2s}}\,dx\,dy\\
&&\qquad= 
\iint_{\R^{2n}} \frac{|e_1(x)-e_1(y)|^2 + |e_2(x)-e_2(y)|^2 + 
2(e_1(x)-e_1(y))(e_2(x)-e_2(y))}{|x-y|^{n+2s}}\,dx\,dy
\\ &&\qquad= 
\iint_{Q_{\Omega_1}} \frac{|e_1(x)-e_1(y)|^2}{|x-y|^{n+2s}}\,dx\,dy
+ \iint_{Q_{\Omega_2}} \frac{|e_2(x)-e_2(y)|^2}{|x-y|^{n+2s}}\,dx\,dy
\\ &&\qquad\qquad+2\iint_{\R^{2n}} \frac{(e_1(x)-e_1(y))(e_2(x)-e_2(y))}{|x-y|^{n+2s}}\,dx\,dy
\\ &&\qquad= \frac{\lambda_s(\Omega_1)+\lambda_s(\Omega_2)}{s\,(1-s)}
+2\iint_{(\Omega_1\times\Omega_2)\cup (\Omega_2\times\Omega_1)
} \frac{(e_1(x)-e_1(y))(e_2(x)-e_2(y))}{|x-y|^{n+2s}}\,dx\,dy.
\end{eqnarray*}
Now we observe that
$$
\iint_{\Omega_1\times\Omega_2} 
\frac{(e_1(x)-e_1(y))(e_2(x)-e_2(y))}{|x-y|^{n+2s}}\,dx\,dy=
-\iint_{\Omega_1\times\Omega_2}
\frac{ e_1(x)e_2(y)}{|x-y|^{n+2s}}\,dx\,dy<0.$$
Similarly,
$$\iint_{\Omega_2\times\Omega_1}
\frac{(e_1(x)-e_1(y))(e_2(x)-e_2(y))}{|x-y|^{n+2s}}\,dx\,dy=
-\iint_{\Omega_2\times\Omega_1}
\frac{ e_1(y)e_2(x)}{|x-y|^{n+2s}}\,dx\,dy<0.$$
So we obtain that
$$ \iint_{Q_{\Omega_1\cup\Omega_2}} \frac{|e(x)-e(y)|^2}{|x-y|^{n+2s}}\,dx\,dy
<\frac{\lambda_s(\Omega_1)+\lambda_s(\Omega_2)}{s\,(1-s)}=
\frac{2\lambda_s(\Omega_1)}{s\,(1-s)}.$$
This and~\eqref{N-e} imply~\eqref{RO}, as desired.

{F}rom~\eqref{RO}, we can take
\begin{equation*}
\sigma \in \big( \lambda_s (\Omega_1\cup\Omega_2),\;\lambda_s(\Omega_1)\big)
=\big( \lambda_s (\Omega_1\cup\Omega_2),\;\lambda_s(\Omega_2)\big).
\end{equation*}
Then the claim in Theorem~\ref{CONG}
follows from
Theorem~\ref{COP}.
\end{proof}

It is worth to notice that Theorem~\ref{CONG} relies on
a purely nonlocal feature: indeed~\eqref{RO} fails
in the local case,
since
\begin{equation}\label{FAU}
\lambda_1(\Omega_1\cup\Omega_2)=\lambda_1(\Omega_1)=\lambda_1(\Omega_2).\end{equation}
Indeed, to prove~\eqref{FAU}, 
one may notice that~$e_1$ is an admissible competitor
for~$\lambda_1(\Omega_1\cup\Omega_2)$, hence~$\lambda_1(\Omega_1\cup\Omega_2)
\le\lambda_1(\Omega_1)$. On the other hand if~$\phi\in H^1_0(\Omega_1\cup\Omega_2)$,
then~$\phi_i:=\phi\chi_{\Omega_i}\in H^1_0(\Omega_i)$ for any~$i\in\{1,2\}$ and thus
$$ \frac{\int_{\Omega_1\cup\Omega_2}|\nabla \phi(x)|^2\,dx}{
\int_{\Omega_1\cup\Omega_2} \phi^2(x)\,dx} 
= \frac{\int_{\Omega_1}|\nabla \phi_1(x)|^2\,dx +
\int_{\Omega_2} |\nabla\phi_2(x)|^2\,dx}{
\int_{\Omega_1} \phi_1^2(x)\,dx
+\int_{\Omega_2} \phi_2^2(x)\,dx}.$$
Now we observe that if~$a_1$, $a_2$, $b_1$ and~$b_2$
are positive and such that~$\frac{a_1}{b_1}\le \frac{a_2}{b_2}$,
then
$$ \frac{a_1+a_2}{b_1+b_2} =
\frac{b_1(a_1+a_2)}{b_1(b_1+b_2)}\ge
\frac{a_1 b_1+ a_1 b_2}{b_1(b_1+b_2)} = \frac{a_1}{b_1}=
\min\left\{\frac{a_1}{b_1},\,\frac{a_2}{b_2} \right\} .$$
As a consequence
$$ \frac{\int_{\Omega_1\cup\Omega_2}|\nabla \phi(x)|^2\,dx}{
\int_{\Omega_1\cup\Omega_2} \phi^2(x)\,dx} \ge
\min \left\{
\frac{\int_{\Omega_1}|\nabla \phi_1(x)|^2\,dx}{
\int_{\Omega_1}\phi_1^2(x)\,dx},\,
\frac{\int_{\Omega_2}|\nabla \phi_2(x)|^2\,dx}{
\int_{\Omega_2}\phi_2^2(x)\,dx} \right\}\ge \lambda_1(\Omega),$$
which shows that~$\lambda_1(\Omega_1\cup\Omega_2)\ge\lambda_1(\Omega)$
and completes the proof of~\eqref{FAU}.

\section{Scaling arguments and proof of Proposition~\ref{NONAM}
and Theorem~\ref{EXT}}\label{SA:5}

The proof of Proposition~\ref{NONAM}
follows by a simple scaling argument, which we present
here for the sake of completeness:

\begin{proof}[Proof of Proposition~\ref{NONAM}] By scaling, we have that
\begin{equation}\label{SC}
\lambda_s(\Omega_r)= r^{-2s}
\lambda_s(\Omega).\end{equation} 
Also, by Theorem~\ref{COP}, a nontrivial
solution exists if and only if~$1>\lambda_s(\Omega_r)$. These
considerations imply the desired claim.
\end{proof}

The proof of Theorem~\ref{EXT} combines scaling arguments and spectral analysis
and it is presented here below.

\begin{proof}[Proof of Theorem~\ref{EXT}] Up to a translation, we may suppose that~$0\in\Omega$.
More precisely, we suppose that~$B_{a_1}\subset \Omega\subset B_{a_2}$,
for some~$a_2>a_1>0$. Then~$\lambda_s(B_{a_2})\le
\lambda_s(\Omega)\le \lambda_s(B_{a_1})$, that is, 
$$ c_1 \lambda_s(B_1)\le \lambda_s(\Omega) \le c_2\lambda_s(B_1),$$
for some~$c_2>c_1>0$.
Furthermore,
$$ \inf_{s\in (0,1]}\lambda_s(B_1) \ge c_3$$
for some~$c_3>0$. This follows, for instance, from\footnote{Regarding formula~(9) of~\cite{dyda} we remark that the map~$(0,1)\ni s\mapsto\gamma(s):=
(12 n+2s(16-2n))$ is monotone, therefore
$$\gamma(s)\ge \min\{\gamma(0),\gamma(1)\}=\min\{ 12n, 8n+32\}>0.$$
This and the continuity of the $\Gamma$-function in~$(0,+\infty)$
imply that the quantity in~(9) of~\cite{dyda} is bounded
from below uniformly in~$s\in(0,1]$.}
formulas~(9) and~(10)
in~\cite{dyda}. Furthermore
$$ \sup_{s\in (0,1]}\lambda_s(B_1) \le c_4.$$
This may be checked by 
fixing $g\in C^\infty_0(B_{a_1})$ with~$\|g\|_{L^2(\R^n)}=1$,
and using that
$$ \lambda_s(\Omega)\le 
s\,(1-s)\, \iint_{Q_\Omega}\frac{|g(x)-g(y)|^2}{
|x-y|^{n+2s}}\,dx\,dy\le c_5 \|g\|_{C^2(\R^n)},$$
for some~$c_5>0$.

The above consideration and the scaling property~\eqref{SC}
give that
$$ c_1\,c_3 \,r^{-2s}\le
\lambda_s(\Omega_r)\le c_2\,c_4 \,r^{-2s},$$
for any~$s\in(0,1]$.

Now we fix~$s<S\in(0,1]$ and we set
$$ \underline r:= \left( \frac{c_1\,c_3}{c_2\,c_4}\right)^{\frac{1}{2(S-s)}}
\;{\mbox{ and }}\;\overline r:= \left( \frac{c_2\,c_4}{c_1\,c_3}\right)^{\frac{1}{2(S-s)}}.$$
Then, if~$r\in(0,\underline r)$ we have that
$$ \lambda_S(\Omega_r)-\lambda_s(\Omega_r)\ge c_1\,c_3\,r^{-2S}-
c_2\,c_4\,r^{-2s}>0,$$
thus we can find~$\sigma_r$ in the interval~$\big( \lambda_s(\Omega_r),\,
\lambda_S(\Omega_r)\big)$. Moreover, we can also find $\tau_r$ such that 
$$ \lambda_s(\Omega_r) <\sigma_r \le \sigma_r +\tau_r 
<\lambda_S(\Omega_r). $$
{F}rom
Theorem~\ref{COP}, we have that in this case equation~\eqref{ER2}
has a nontrivial solution, while~\eqref{ER1}
only has the trivial solution.

Viceversa, if~$r\in(\overline r,+\infty)$ then~$\lambda_S(\Omega_r)-\lambda_s(\Omega_r)<0$,
hence we can find~$\sigma_r$ in the interval~$\big( \lambda_S(\Omega_r),\,
\lambda_s(\Omega_r)\big)$ and $\tau_r$ such that 
$$ \lambda_S(\Omega_r) <\sigma_r \le \sigma_r +\tau_r 
<\lambda_s(\Omega_r). $$
In this case,
Theorem~\ref{COP} gives that~\eqref{ER1}
has a nontrivial solution, while~\eqref{ER2}
has only the trivial solution.
\end{proof}

\section{Fitting the resources and proof of Lemma~\ref{easy}
and Theorem~\ref{abundance}}\label{SA:6}

The proof of Lemma~\ref{easy} is a simple maximum principle,
whose details are presented here below for completeness:

\begin{proof}[Proof of Lemma~\ref{easy}]
Suppose by contradiction that
there exists~$x_o\in\Omega$ such that~$0<
\max_{\R^n} u-\|\sigma\|_{L^\infty(\Omega)}-\tau =u(x_o)-\|\sigma\|_{L^\infty(\Omega)}-\tau$. 
Notice that, using~\eqref{unoint}, 
$$ (J*u)(x_o)=\int_{\R^n} J(x_o-y)u(y)\,dy \le u(x_o)\int_{\R^n} J(z)\,dz = u(x_o).$$ 
Then
$$ 0\le(-\Delta)^{s} u(x_o) =
(\sigma(x_o)- u(x_o))\,u(x_o)+ \tau\,(J*u)(x_o) \le 
(\sigma(x_o)+\tau- u(x_o))\,u(x_o) <0,$$
which is a contradiction.
\end{proof}

Now we show that~$u$ always fits the ``abundant'' 
resources (up to a multiplicative
constant):

\begin{proposition}\label{abundance:2}
Let~$R>r>0$.
Let~$\Omega$ be a bounded Lipschitz domain, with~$\overline{B_{R}}\subset\Omega$.
Let~$u$ be the minimal
solution of
\begin{equation*}
\left\{ \begin{matrix}(-\Delta)^{s} u =
(\sigma- u)\,u +\tau\,(J*u) &{\mbox{ in }}\Omega,
\\u=0 & {\mbox{ outside }}\Omega,\\
u\ge0&{\mbox{ in }}\R^n,\end{matrix}\right.
\end{equation*}
according to Proposition~\ref{existence}.

Then, there exist~$c_o\in(0,1)$ only depending on~$n$, $s$,
$R$ and~$r$, and~$M_o>0$ only 
depending on~$n$, $s$ and~$R$,
such that if~$M\ge M_o$ and $\sigma\ge M$ 
in~$B_R$, then~${\mathcal{E}}(u)<0$ and~$u\ge c_oM$ in~$B_r$.
\end{proposition}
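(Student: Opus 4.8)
The plan is to estimate the energy ${\mathcal{E}}$ of an explicit competitor supported in $B_R$ and scaled appropriately, and then to propagate the resulting lower bound on the minimizer from $B_R$ to $B_r$ by a barrier/maximum-principle argument using the equation itself. First I would fix a nonnegative function $\phi\in C^\infty_0(B_R)$ with $\phi\equiv 1$ on $B_r$ and $\|\phi\|_{L^\infty}\le 1$, and test ${\mathcal{E}}$ against $t\phi$ for a parameter $t>0$ to be optimized. Using $\sigma\ge M$ on $B_R$ and discarding the nonnegative convolution and cubic contributions appropriately, one gets
\[
{\mathcal{E}}(t\phi)\le \frac{t^2}{2}\Big[s(1-s)\iint_{Q_\Omega}\frac{|\phi(x)-\phi(y)|^2}{|x-y|^{n+2s}}\,dx\,dy - M\int_{B_R}\phi^2\Big] + \frac{t^3}{3}\int_{B_R}\mu\,\phi^3 .
\]
The bracket is $\le A - cM$ for constants $A,c>0$ depending only on $n,s,R$ (here $\int_{B_R}\phi^2\ge c$); so for $M\ge M_o:=2A/c$ the bracket is $\le -\tfrac{cM}{2}$. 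Choosing $t$ of size a small constant times $M$ then makes ${\mathcal{E}}(t\phi)<0$, since the $t^3$ term is controlled by $\|\mu\|$ but $\mu$ need not be bounded — so one must instead be a bit more careful and keep the genuine cubic term $\int \mu|u|^3/3$ in the energy rather than bounding $\mu$ pointwise. Optimizing in $t$ one finds $\min_t {\mathcal{E}}(t\phi)<0$ as soon as $M\ge M_o$ for suitable $M_o(n,s,R)$, so the minimal solution $u$ from Proposition~\ref{existence} satisfies ${\mathcal{E}}(u)\le {\mathcal{E}}(t\phi)<0$, in particular $u\not\equiv 0$, hence $u>0$ in $\Omega$ by Lemma~\ref{positive}.

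Next I would turn the negativity of the energy, together with $\sigma\ge M$ on $B_R$, into the quantitative lower bound $u\ge c_oM$ on $B_r$. The key point is to compare $u$ with the solution $w$ of the linear-type problem obtained by freezing: since $u>0$, on $B_R$ we have $(-\Delta)^s u = (\sigma-\mu u)u+\tau(J*u)\ge (\sigma-\mu u)u$, and one expects $u$ to be comparable to $M/\mu$ there. A clean way is to use the sub/supersolution structure of the logistic nonlinearity: the constant-in-$B_R$ barrier. Consider $\underline u := c_o M\, v$, where $v$ solves the torsion-type problem $(-\Delta)^s v = 1$ in $B_R$, $v=0$ outside $B_R$ (so $v>0$ in $B_R$, bounded, depending only on $n,s,R$, with $v\ge v_r>0$ on $B_r$). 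For $c_o$ small and $M\ge M_o$ large one checks $(-\Delta)^s\underline u = c_oM \le (\sigma - \mu\,\underline u)\underline u$ on $B_R$ provided $c_oM\|\mu\|_{L^\infty(B_R)}\|v\|_\infty \le \sigma/2\ge M/2$, i.e. $c_o \le (2\|\mu\|_{L^\infty(B_R)}\|v\|_\infty)^{-1}$ and $c_o M \le (\text{const})M$; one also needs $\underline u = 0 \le u$ outside $B_R$, which requires $\underline u$ to be supported in $B_R$ — true by construction. Then $\underline u$ is a subsolution and $u$ a supersolution of the same problem in $B_R$ with ordered exterior data, so the comparison principle for $(-\Delta)^s$ with the logistic (decreasing-in-$u$ for $u$ large) nonlinearity gives $u\ge \underline u$ in $B_R$; in particular $u\ge c_o v_r M$ on $B_r$, which is the desired bound after renaming $c_o v_r$ as $c_o$.

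The main obstacle, and the step needing the most care, is the comparison argument: the nonlinearity $f(x,u)=(\sigma-\mu u)u$ is not monotone decreasing in $u$ near $u=0$, so one cannot directly invoke a naive maximum principle. The fix is to work in the regime where $u$ is already known to be large — but a priori we only know $u>0$, not $u\gtrsim M$. I would circumvent this by first establishing a crude lower bound (e.g. that ${\mathcal{E}}(u)<0$ forces $\int_{B_R}\mu u^3$ or $\int_{B_R}\sigma u^2$ to be bounded below by a positive constant times $M$, hence $u$ is not uniformly tiny), then use the exterior-ball/boundary-Harnack regularity for $(-\Delta)^s$ and the equation to upgrade this $L^2$-type information to a pointwise bound $u\ge c M$ on, say, $B_{(R+r)/2}$, and finally run the barrier comparison on the smaller ball where the competing term $\mu u^2$ is genuinely dominated. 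Keeping track of which constants depend only on $n,s,R$ (the threshold $M_o$) versus also on $r$ (the constant $c_o$) is the bookkeeping one must do carefully; the convolution term $\tau(J*u)\ge 0$ only helps and can be dropped throughout.
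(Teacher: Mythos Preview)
Your first step (showing ${\mathcal{E}}(u)<0$ via a competitor supported in $B_R$) is fine and close to the paper's; the paper uses the first Dirichlet eigenfunction $e_o$ of $B_R$ rather than a bump, but either works. Note incidentally that in this proposition $\mu\equiv 1$ (the equation reads $(\sigma-u)u$), so your concern about unbounded $\mu$ is moot.

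The genuine gap is in your barrier argument for the lower bound. Your candidate $\underline u = c_o M\, v$, with $(-\Delta)^s v=1$ in $B_R$ and $v=0$ outside, is \emph{not} a subsolution on all of $B_R$: near $\partial B_R$ one has $v\to 0$, hence $(\sigma-\underline u)\underline u\to 0$, while $(-\Delta)^s\underline u = c_oM$ stays bounded away from zero. Thus the required inequality $(-\Delta)^s\underline u \le (\sigma-\underline u)\underline u$ fails in a boundary layer, and no choice of small $c_o$ repairs this. Your proposed workaround (integral lower bound from ${\mathcal{E}}(u)<0$, then Harnack/regularity to upgrade pointwise) is sketchy, and in any case boundary Harnack on $\Omega$ would bring in dependence on $\Omega$ and on $\sigma$ outside $B_R$, which the statement does not allow.

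The paper's device is cleaner and avoids all of this: instead of the torsion function, slide the eigenfunction $\eta e_o$ (so $(-\Delta)^s e_o=\lambda_s(B_R)\,e_o$) from below until it first touches $u$ at some $\bar x\in B_R$. At the contact point the eigenfunction relation gives $(-\Delta)^s(\eta e_o)(\bar x)=\lambda_s(B_R)\,\eta e_o(\bar x)=\lambda_s(B_R)\,u(\bar x)$, so the touching inequality $0\ge (-\Delta)^s(u-\eta e_o)(\bar x)$ combined with the equation and $\tau(J*u)\ge0$ yields
\[
0 \ge (\sigma(\bar x)-u(\bar x))u(\bar x)-\lambda_s(B_R)\,u(\bar x)\ge (M-\lambda_s(B_R))\,u(\bar x)-u^2(\bar x),
\]
hence $u(\bar x)\ge M/2$ once $M\ge M_o:=2\lambda_s(B_R)$. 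Since $u(\bar x)=\eta e_o(\bar x)\le \eta\|e_o\|_{L^\infty}$, this forces $\eta\ge M/(2\|e_o\|_{L^\infty})$, and therefore $u\ge \eta e_o\ge c_o M$ on $B_r$ with $c_o=\inf_{B_r}e_o/(2\|e_o\|_{L^\infty})$. The key point is that for the eigenfunction barrier $(-\Delta)^s$ is proportional to the barrier itself, so there is no boundary-layer failure; and the argument is a pure touching-point computation, so the lack of monotonicity of $t\mapsto(\sigma-t)t$ near $t=0$ never enters.
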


\begin{proof} 
We take~$e_o$ to be the first Dirichlet eigenfunction of~$B_{R}$.
Then we have
\begin{eqnarray*}
{\mathcal{E}}(e_o)&=&\frac{s\,(1-s)}{2} \iint_{\R^{2n}}
\frac{|e_o(x)-e_o(y)|^2}{
|x-y|^{n+2s}}\,dx\,dy
+\int_\Omega \frac{|e_o|^3}{3}
-\frac{\sigma\,e_o^2}{2}-\frac{\tau\,e_o\,(J*e_o)}{2}\,dx \\
&=& \frac{\lambda_s(B_{R})}{2} \int_{B_{R}} e_o^2\,dx
+\int_{B_R} \frac{|e_o|^3}{3}
-\frac{\sigma\,e_o^2}{2}-\frac{\tau\,e_o\,(J*e_o)}{2}\,dx\\
&\le&
\frac{\lambda_s(B_{R})-M}{2} \int_{B_{R}} e_o^2\,dx
+\frac{|B_R|\,\|e_o\|_{L^\infty(\R^n)}^3}{3}-\int_{B_R}\frac{\tau\,e_o\,(J*e_o)}{2}\,dx 
\end{eqnarray*}
The latter quantity is negative if~$M\ge M_o$,
for large values of~$M_o$,
therefore the energy of the minimizer~$u$ is negative
and~$u$ is not the trivial function.

Consequently, from Proposition~\ref{existence} and Lemma~\ref{positive}, we can define
$$ \iota := \inf_{B_R} u>0.$$
In particular, if~$\eta \in (0, \,\iota\,\|e_o\|^{-1}_{L^\infty(\R^n)})$
we have that~$\eta e_o\le u$. So we take the first~$\eta$ for
which a contact point in $\Omega$ occurs (of course, if~$\eta e_o\le u$
for all~$\eta>0$,
we obtain the desired result by taking~$\eta$ as large as we wish,
hence we can assume that such contact point exists).
That is, we have that~$\eta e_o\le u$ and there exists~$\bar x\in\Omega$
such that~$\eta e_o(\bar x)= u(\bar x)$. Since~$e_o$ vanishes outside~$B_R$,
we have that~$\bar x\in B_R$. Therefore
\begin{eqnarray*}
&&0\ge (-\Delta)^s ( u-\eta e_o)(\bar x) =(\sigma(\bar x)-u(\bar x))u(\bar x) 
+\tau\,(J*u)(\bar x) -
\eta \lambda_s(B_R)\,e_o(\bar x) \\&&\qquad\qquad\ge 
(\sigma(\bar x)-u(\bar x))u(\bar x) -\lambda_s(B_R)\,u(\bar x) .\end{eqnarray*}
Accordingly,
$$ 0\ge M u(\bar x) -u^2(\bar x) -\lambda_s(B_R)\,u(\bar x)\ge
\frac{M}{2} u(\bar x) -u^2(\bar x),$$
as long as~$M\ge M_o$ and~$M_o$ is large enough.
This says that
$$ \frac{M}{2} \le u(\bar x)=\eta e_o(\bar x)\le \eta \,\|e_o\|_{L^\infty(\R^n)}.$$
In particular~$\eta\ge M/(2 \,\|e_o\|_{L^\infty(\R^n)})$ and therefore, for any~$x\in B_r$,
\begin{equation*}
u(x)\ge \eta e_o(x) \ge
\frac{\inf_{B_r} e_o}{2\,\|e_o\|_{L^\infty(\R^n)}}\,M.\qedhere\end{equation*}
\end{proof}

Now, Theorem~\ref{abundance} follows plainly from
Proposition~\ref{abundance:2}.

\section{Fitting the resources in a nonlocal setting
and proof of Theorem~\ref{BOVI}}\label{SA:6:BIS}

Now we prove Theorem~\ref{BOVI}, by exploiting
a result in~\cite{Di_Sa_Val}, joined to a minimization argument.

More precisely, we make use of
Theorem~1.1 in~\cite{Di_Sa_Val}, which we state here for the
convenience of the reader:

\begin{theorem}\label{AHJ7uHAa13HA}
Fix $k \in \N$. Then, given any function~$ f \in C^k(B_2)$ and any~$\varepsilon>0$,
we can find~$R_\varepsilon>2$ and a function~$u_\varepsilon\in C^s_0(
B_{R_\varepsilon})$ such that
\begin{eqnarray*}
&& (-\Delta)^s u_\varepsilon =0 {\mbox{ in }} B_2\\
{\mbox{and }}&& \| u_\varepsilon -f\|_{C^k(\overline{B_2})}\le
\varepsilon.\end{eqnarray*}
\end{theorem}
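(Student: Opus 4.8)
The plan is to deduce the statement from a duality argument built on the explicit Poisson kernel of $(-\Delta)^s$ on a ball, together with one elementary fact about powers of linear forms. Fix a radius $\rho>2$ and let $W$ denote the vector space of all functions $u$ that are $s$-harmonic in $B_\rho$, that agree outside $B_\rho$ with a smooth function vanishing outside some ball, and hence that vanish outside a (possibly large) ball $B_R$; by the optimal boundary regularity for the Dirichlet problem for $(-\Delta)^s$, each such $u$ belongs to $C^s_0(\overline{B_R})$. Since $\overline{B_2}\subset B_\rho$, every $u\in W$ is in particular $s$-harmonic in $B_2$. It therefore suffices to prove that the restrictions $\{u|_{\overline{B_2}}:u\in W\}$ are dense in $C^k(\overline{B_2})$: given $f$ and $\varepsilon$, one then picks $u_\varepsilon\in W$ with $\|u_\varepsilon-f\|_{C^k(\overline{B_2})}\le\varepsilon$ and lets $R_\varepsilon$ be a radius outside of which $u_\varepsilon$ vanishes.

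To prove this density, suppose by contradiction that the closure of $\{u|_{\overline{B_2}}:u\in W\}$ in $C^k(\overline{B_2})$ is a proper subspace. By the Hahn--Banach theorem there is a nonzero functional $\Lambda\in\big(C^k(\overline{B_2})\big)^\ast$ that annihilates $W$; we may write $\Lambda=\sum_{|\beta|\le k}\partial^\beta\mu_\beta$ for finite signed measures $\mu_\beta$ supported in $\overline{B_2}$. For exterior data $g\in C^\infty_0(\R^n\setminus\overline{B_\rho})$, the solution of $(-\Delta)^s u=0$ in $B_\rho$ with $u=g$ outside is, by Riesz's formula, $u(x)=\int \mathcal{P}_\rho(x,y)\,g(y)\,dy$ with $\mathcal{P}_\rho(x,y)=c_{n,s}\,(\rho^2-|x|^2)^s(|y|^2-\rho^2)^{-s}|x-y|^{-n}$ for $|x|<\rho<|y|$. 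Because $\mathcal{P}_\rho(\cdot,y)$ is smooth on a neighbourhood of $\overline{B_2}$ while $g$ is supported far from $\overline{B_2}$, Fubini applies and $0=\langle\Lambda,u\rangle=\int \Phi(y)\,g(y)\,dy$, where $\Phi(y):=\langle\Lambda,\mathcal{P}_\rho(\cdot,y)\rangle$; hence $\Phi$ vanishes identically on $\R^n\setminus\overline{B_\rho}$.

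Now expand $\mathcal{P}_\rho(x,y)$ in inverse powers of $|y|$. Setting $t=1/|y|$ and $\omega=y/|y|$, the factor $(|y|^2-\rho^2)^{-s}|x-y|^{-n}$ equals $|y|^{-n-2s}\sum_{j\ge0}t^jD_j(x,\omega)$, where each $D_j(\cdot,\omega)$ is a polynomial in $x$ of degree $j$ whose homogeneous part of degree $j$ is a nonzero constant times $(x\cdot\omega)^j$. Matching, in $\Phi\equiv0$, the coefficients of the distinct powers of $|y|$ yields $\big\langle\Lambda,(\rho^2-|\cdot|^2)^sD_j(\cdot,\omega)\big\rangle=0$ for every $j\ge0$ and every $\omega\in S^{n-1}$. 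An induction on $j$ then shows $\big\langle\Lambda,(\rho^2-|\cdot|^2)^sP\big\rangle=0$ for every polynomial $P$: at step $j$ the part of $D_j$ of degree $<j$ is annihilated by the inductive hypothesis, so $\big\langle\Lambda,(\rho^2-|\cdot|^2)^s(x\cdot\omega)^j\big\rangle=0$ for all $\omega$, and since $\{(x\cdot\omega)^j:\omega\in S^{n-1}\}$ spans the homogeneous polynomials of degree $j$, all polynomials of degree $\le j$ are covered. Finally, as $\rho>2$ the weight $(\rho^2-|x|^2)^s$ is smooth and strictly positive on $\overline{B_2}$, so multiplication by it is an isomorphism of $C^k(\overline{B_2})$; since polynomials are dense in $C^k(\overline{B_2})$, we conclude $\Lambda=0$, contradicting $\Lambda\neq0$.

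I expect the main obstacle to be the analytic bookkeeping in the third paragraph: justifying the termwise expansion of the Poisson kernel and the coefficient matching against the order-$k$ distribution $\Lambda$, and carrying out the induction that upgrades ``$\Lambda$ annihilates the Gegenbauer-type polynomials $(\rho^2-|\cdot|^2)^sD_j(\cdot,\omega)$'' to ``$\Lambda$ annihilates $(\rho^2-|\cdot|^2)^s$ times every polynomial.'' The choice to work with the auxiliary ball $B_\rho$, $\rho>2$, rather than with $B_2$ itself is what keeps the weight $(\rho^2-|x|^2)^s$ smooth up to $\overline{B_2}$ and makes the final density step legitimate; and the regularity assertion $u_\varepsilon\in C^s_0(\overline{B_{R_\varepsilon}})$ is routine, being exactly the known $C^s$ regularity of the fractional Poisson integral with smooth exterior datum.
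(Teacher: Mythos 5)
First, a remark on provenance: the paper does not prove this statement at all; it quotes it verbatim as Theorem~1.1 of \cite{Di_Sa_Val}, where the proof is quite different from yours (one shows that the $k$-jets at a point of $s$-harmonic functions span the full jet space, by a blow-up analysis of the model solution $(x\cdot e)_+^s$, and then concludes by a Hahn--Banach/Taylor argument). Your route through the explicit fractional Poisson kernel of a larger ball $B_\rho$ is therefore genuinely different, and much of it is sound: the reduction to density of $W$ in $C^k(\overline{B_2})$, the Hahn--Banach step and the representation of $\Lambda$, the interchange of $\Lambda$ with the Poisson integral, the $C^s_0$ regularity of the Poisson extension, the termwise application of $\Lambda$ to the power series in $t$ (justified by analyticity of $t\mapsto \mathcal{P}_\rho(\cdot,\omega/t)$ with values in $C^k(\overline{B_2})$), and the final step that $(\rho^2-|x|^2)^s\cdot({\rm polynomials})$ is dense in $C^k(\overline{B_2})$ are all fine. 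Note also that the factor $(|y|^2-\rho^2)^{-s}$ does not depend on $x$, so it pulls out of the pairing and plays no role.

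The gap is in the algebraic heart of the third paragraph. The coefficient of $t^j$ in $|\omega-tx|^{-n}=(1-2t\,x\cdot\omega+t^2|x|^2)^{-n/2}$ is \emph{homogeneous of degree $j$} in $x$ and equals $|x|^j C^{(n/2)}_j\big(\tfrac{x\cdot\omega}{|x|}\big)$ (a Gegenbauer polynomial); for $j\ge2$ and $n\ge2$ this is \emph{not} a constant multiple of $(x\cdot\omega)^j$. For instance, for $j=2$ it equals $\tfrac n2\big[(n+2)(x\cdot\omega)^2-|x|^2\big]$. Hence the degree-$j$ part of $D_j(\cdot,\omega)$ contains extra terms $|x|^{2i}(x\cdot\omega)^{j-2i}$, $i\ge1$, of the \emph{same} degree $j$, which are not covered by your inductive hypothesis (degrees $<j$ only); so from $\langle\Lambda,(\rho^2-|\cdot|^2)^sD_j(\cdot,\omega)\rangle=0$ you cannot conclude $\langle\Lambda,(\rho^2-|\cdot|^2)^s(x\cdot\omega)^j\rangle=0$, and the induction breaks already at $j=2$, where you only learn that $\Lambda$ kills $(\rho^2-|\cdot|^2)^s\big[(n+2)(x\cdot\omega)^2-|x|^2\big]$. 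What your argument actually requires is that the family $\big\{|x|^jC^{(n/2)}_j(\tfrac{x\cdot\omega}{|x|}):\omega\in S^{n-1}\big\}$ spans \emph{all} homogeneous polynomials of degree $j$. This is true, but it is a real additional step, not bookkeeping: decompose the homogeneous polynomials of degree $j$ as $\bigoplus_i |x|^{2i}\mathcal{H}_{j-2i}$ (spherical harmonics), observe that the span over $\omega$ is rotation invariant, and check that the projection of the zonal Gegenbauer function onto each summand is nonzero --- equivalently, that the connection coefficients expressing $C^{(n/2)}_j$ in terms of $C^{((n-2)/2)}_{j-2i}$ are all nonzero (they are, since the parameter gap is $1$); at $j=2$ this is the elementary observation that averaging $(x\cdot\omega)^2$ over $\omega$ gives $|x|^2/n\ne|x|^2/(n+2)$, which recovers $|x|^2$ and then $(x\cdot\omega)^2$. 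With such a spanning lemma inserted (and the induction rephrased accordingly), your proof does go through; as written, the stated reason for the key step is false and the argument is incomplete.
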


The details of the proof of Theorem~\ref{BOVI} now go as follows:

\begin{proof}[Proof of Theorem~\ref{BOVI}] First of all, we use
Theorem~\ref{AHJ7uHAa13HA}
to find a function~$w_\varepsilon$ and a radius~$R_\varepsilon>2$ such that
\begin{equation}
\label{KKJASFOF}\begin{split}
& (-\Delta)^s w_\varepsilon = 0 \qquad{\mbox{ in $B_2$,}}\\
& w_\varepsilon =0 \qquad{\mbox{ in $\R^n\setminus B_{R_\varepsilon}$}},\\
{\mbox{and }}\;& \|w_\varepsilon-\mu^{-1}\sigma\|_{C^k(\overline{B_2})}
\le\varepsilon.\end{split}
\end{equation}
Let
\begin{equation}\label{7uhs1a2}
W_\varepsilon:=|w_\varepsilon|\;{\mbox{ and }}\;\sigma_\varepsilon:=
\mu w_\varepsilon.\end{equation} Notice that
$$  \|\sigma_\varepsilon-\sigma\|_{C^k(\overline{B_1})} =
 \|\mu (w_\varepsilon-\mu^{-1}\sigma)\|_{C^k(\overline{B_1})}
\le C_k \,\|w_\varepsilon-\mu^{-1}\sigma\|_{C^k(\overline{B_1})}
\le C_k \varepsilon,$$
for some~$C_k>0$, possibly depending on~$\|\mu\|_{C^k(\overline{B_1})}$,
and this proves~\eqref{LAJ6:OV:CC} (up to renaming~$\varepsilon$).

Moreover, if~$x\in \overline{B_2}$,
$$ w_\varepsilon\ge \mu^{-1}\sigma-
\|w_\varepsilon-\mu^{-1}\sigma\|_{L^\infty(\overline{B_2})}\ge
\inf_{\overline{B_2}} \mu^{-1}\sigma-\varepsilon\ge0,$$
if we take~$\varepsilon>0$ small enough,
therefore
\begin{equation}\label{YUJA:A}
{\mbox{$W_\varepsilon=w_\varepsilon$ in~$\overline{B_2}$.}}\end{equation}
Accordingly, for any~$x\in B_1$,
\begin{eqnarray*} \int_{\R^n} \frac{W_\varepsilon(x+y)+
W_\varepsilon(x-y)-2W_\varepsilon(x)}{|y|^{n+2s}}\,dy
&\ge& \int_{\R^n} \frac{w_\varepsilon(x+y)+
w_\varepsilon(x-y)-2W_\varepsilon(x)}{|y|^{n+2s}}\,dy\\&=&
\int_{\R^n} \frac{w_\varepsilon(x+y)+
w_\varepsilon(x-y)-2w_\varepsilon(x)}{|y|^{n+2s}}\,dy\end{eqnarray*}
and thus
$$ -(-\Delta)^s W_\varepsilon(x)\ge0$$
for any~$x\in B_1$.
As a consequence,
\begin{equation}\label{6fgaq1a}
f_\varepsilon(x):= \tau (J*W_\varepsilon)(x) -(-\Delta)^s W_\varepsilon(x)\ge0
\end{equation}
for any~$x\in B_1$.

By~\eqref{YUJA:A}, we get that~$
(-\Delta)^s W_\varepsilon\in L^\infty(B_1)$, and consequently
\begin{equation}\label{6fgaq1a:BIS}
f_\varepsilon\in L^\infty(B_1).
\end{equation}
Now we introduce the energy functional
$$ {\mathcal{G}}(v):=
\frac{s\,(1-s)}{2}\iint_{\R^{2n}} \frac{|v(x)-v(y)|^2}{|x-y|^{n+2s}}\,dx\,dy
+\int_{B_1} \frac{\mu |v|^3}{3}+\frac{\sigma_\varepsilon v^2}{2}
-f_\varepsilon \,v -\frac{\tau v\,(J*v)}{2}\,dx$$
and we aim to minimize~${\mathcal{G}}$ among all the functions
that vanish outside~$B_1$. For this, we observe that~$ {\mathcal{G}}(0)=0$
and we take a minimizing sequence~$v_j$, namely
\begin{equation}\label{9iaHHJJA:0}
\lim_{j\to+\infty} {\mathcal{G}}(v_j)=\inf
{\mathcal{G}}(v),\end{equation}
where the infimum is taken among the functions~$v$
such that~$v=0$ in~$\R^n\setminus B_1$.
We observe that, by~\eqref{LAJ6:OV:CC},
we know that
$$ \inf_{\overline{B_1}}\sigma_\varepsilon>0.$$
Also, by Lemma~\ref{young},
$$ \int_{B_1}v_j(x)(J*v_j)(x)\,dx \le \|v_j\|_{L^2(B_1)}^2$$
and, by~\eqref{7bis} (used here with~$\sigma=1$),
$$ \frac{(1+\tau)\,v_j^2}{2}
\le \frac{\mu\,|v_j|^3}{3}+\frac{(1+\tau)^3}{6 \mu^2}.$$
Using these considerations,
we find that
\begin{eqnarray*}
&& \int_{B_1} 
f_\varepsilon \,v_j +\frac{\tau v_j\,(J*v_j)}{2}
-\frac{\mu |v_j|^3}{3}
\,dx 
\le
\int_{B_1}
\frac{ f_\varepsilon^2+v_j^2}{2} +\frac{\tau v_j^2}{2}
-\frac{\mu |v_j|^3}{3}
\,dx \\ &&\qquad\qquad\le 
\int_{B_1}
\frac{ f_\varepsilon^2}{2}+  
\frac{(1+\tau)^3}{6 \mu^2} 
\,dx\le C_\varepsilon,
\end{eqnarray*}
for some~$C_\varepsilon >0$ that does not depend on~$j$.
As a consequence,
$$ {\mathcal{G}}(v_j)\ge
\frac{s\,(1-s)}{2}\iint_{\R^{2n}} \frac{|v_j(x)-v_j(y)|^2}{|x-y|^{n+2s}}\,dx\,dy
+\int_{B_1} \frac{\sigma_\varepsilon v_j^2}{2}\,dx -C_\varepsilon.$$
This gives that~$v_j$ is precompact in~$L^2(B_1)$
(see e.g. Theorem~7.1 in~\cite{guida}) and so we may suppose,
up to a subsequence, that it converges to some~$v_\star$
in~$L^2(B_1)$ and a.e. in~$\R^n$, with~$v_\star=0$ outside~$B_1$.

Therefore, by Fatou Lemma,
\begin{equation}\label{9iaHHJJA:1}
\begin{split}
&\liminf_{j\to+\infty}
\frac{s\,(1-s)}{2}\iint_{\R^{2n}} \frac{|v_j(x)-v_j(y)|^2}{|x-y|^{n+2s}}\,dx\,dy
+\int_{B_1} \frac{\mu |v_j|^3}{3}+\frac{\sigma_\varepsilon v_j^2}{2}
\\ &\qquad\ge
\frac{s\,(1-s)}{2}\iint_{\R^{2n}} \frac{|v_\star(x)-v_\star(y)|^2}{|x-y|^{n+2s}}\,dx\,dy
+\int_{B_1} \frac{\mu |v_\star|^3}{3}+\frac{\sigma_\varepsilon v_\star^2}{2}
.\end{split}
\end{equation}
Also, by weak convergence in~$L^2(B_1)$,
\begin{equation}\label{9iaHHJJA:2}
\lim_{j\to+\infty}\int_{B_1} f_\varepsilon \,v_j\,dx=
\int_{B_1} f_\varepsilon \,v_\star\,dx.
\end{equation}
In addition, by Lemma~\ref{young},
\begin{eqnarray*}
&& \left| \int_{B_1} v_\star (J*v_\star) -v_j(J*v_j)\,dx\right|\\
&\le&
\int_{B_1}| v_\star-v_j| \,|J*v_\star|\,dx+
\int_{B_1} |v_j|\, |J*(v_\star-v_j)|\,dx
\\ &\le& \|v_\star-v_j\|_{L^2(B_1)}\, \|v_\star\|_{L^2(B_1)}+
\|v_\star-v_j\|_{L^2(B_1)}\,\|v_j\|_{L^2(B_1)}
\end{eqnarray*}
that are infinitesimal as~$j\to+\infty$. Using this,
\eqref{9iaHHJJA:0}, \eqref{9iaHHJJA:1}
and~\eqref{9iaHHJJA:2}, we obtain that~$v_\star$ is a minimizer
for~${\mathcal{G}}$.

Since~${\mathcal{G}}(|v|)\le {\mathcal{G}}(v)$, due to~\eqref{6fgaq1a},
we may also suppose that
\begin{equation}\label{8ujx52ysuwdgh}
{\mbox{$v_\star$ is nonnegative.}}
\end{equation}
The minimization property of~$v_\star$ gives that
$$ (-\Delta)^s v_\star+ \mu v_\star^2 +
\sigma_\varepsilon v_\star
-f_\varepsilon -\tau (J*v_\star)=0$$
in~$B_1$. Hence, we define
$$ u_\varepsilon := W_\varepsilon + v_\star$$
and, recalling~\eqref{6fgaq1a}, we find that
\begin{equation}\label{9wdq126qsyc}
\begin{split}
& -(-\Delta)^s u_\varepsilon +(\sigma_\varepsilon -\mu\, u_\varepsilon)\,
u_\varepsilon +\tau (J*u_\varepsilon)\\
=\;& -(-\Delta)^s W_\varepsilon
+\mu v_\star^2 +
\sigma_\varepsilon v_\star
-f_\varepsilon -\tau (J*v_\star)+\sigma_\varepsilon W_\varepsilon +
\sigma_\varepsilon v_\star\\
&\qquad-\mu W_\varepsilon^2 -\mu v_\star^2 -2\mu
W_\varepsilon v_\star +\tau(J*W_\varepsilon)+\tau(J*v_\star)\\
=\;& 2\sigma_\varepsilon v_\star
+\sigma_\varepsilon W_\varepsilon -\mu W_\varepsilon^2 -2\mu
W_\varepsilon v_\star .
\end{split}\end{equation}
in~$B_1$. Now we recall~\eqref{7uhs1a2}
and~\eqref{YUJA:A} and we find that, in~$B_1$,
$$ W_\varepsilon=w_\varepsilon=\mu^{-1}\sigma_\varepsilon.$$
Hence we insert this identity into~\eqref{9wdq126qsyc}
and we conclude that
$$ -(-\Delta)^s u_\varepsilon +(\sigma_\varepsilon -\mu\, u_\varepsilon)\,
u_\varepsilon +\tau (J*u_\varepsilon)=
2\sigma_\varepsilon v_\star
+\mu^{-1}\sigma_\varepsilon^2
-\mu \mu^{-2}\sigma_\varepsilon^2
-2\sigma_\varepsilon v_\star=0$$
in $B_1$,
which establishes~\eqref{LAJ6:OV:1}.

Also, by~\eqref{KKJASFOF}, we have that both~$W_\varepsilon$
and~$v_\star$ vanish outside~$B_{R_\varepsilon}$, and this establishes~\eqref{LAJ6:OV:2}.
Finally, by~\eqref{7uhs1a2}
and~\eqref{8ujx52ysuwdgh},
$$ u_\varepsilon\ge W_\varepsilon \ge w_\varepsilon
=\mu^{-1}\sigma_\varepsilon,$$
which proves~\eqref{LAJ6:OV:3}.
\end{proof}

\section{Beating the resources and proof of Theorem~\ref{BEAT0}}\label{SA:7}

The proof of Theorem~\ref{BEAT0} is based on a contradiction and limit argument.

\begin{proof}[Proof of Theorem~\ref{BEAT0}] Let~$u_m$ be the solution of~\eqref{df}
provided by Proposition~\ref{existence}. If the desired claim were false,
we would have that~$u_m\le\sigma_m$.
Then
$$ \big| (-\Delta)^{s} u_m\big| =
(\sigma_m- u_m)\,u_m  \le \sigma_m u_m
\le \|\sigma_m\|_{L^\infty(\Omega)} \| u_m\|_{L^\infty(\Omega)}.$$
Hence, using Lemma~\ref{easy} with $\tau=0$,
$$ \big| (-\Delta)^{s} u_m\big|\le \|\sigma_m\|_{L^\infty(\Omega)}^2
\le 
\big( \|\sigma_0\|_{L^\infty(\Omega)}+1\big)^2.$$
Notice that the latter quantity does not depend on~$m$.
Thus, by fractional elliptic regularity (see e.g. Proposition~1.1
in~\cite{ros-serra} and Lemma~4.3 in~\cite{silv-app})
we have that~$u_m$ converges uniformly in~$\Omega$ to some~$u_0$
as~$m\to0$, and~$u_0$ solves
$$ (-\Delta)^{s} u_0 =
(\sigma_0- u_0)\,u_0$$
in $\Omega$. By Theorem~\ref{abundance}, we know that~$u_0>0$
in~$B_r$. In particular~$u_0$ is not the trivial solution,
and so~$u_0>0$, thanks to Lemma~\ref{positive}. Then we have
$$ 0<u_0(x_0)=\lim_{m\to0} u_m(x_0)
\le \lim_{m\to0}\sigma_m(x_0)=\sigma_0(x_0)=0,$$
which is a contradiction.
\end{proof}

\section{Periodic solutions and proof of 
Theorems~\ref{0o56:TH} and~\ref{0o56:TH:2}}\label{SA:PP}

To prove Theorem~\ref{0o56:TH}, we consider an 
auxiliary minimization problem. The functional is tailored in order to
be compatible with integer translations and produce
solutions of~\eqref{EQ:PER} via an Euler-Lagrange equation,
tested against periodic test functions.

Here, we assume that~$J$ is supported in some ball~$B_\rho$ and
we let
\begin{equation}\label{defQ}
Q:= \left(-\frac{1}{2}, \frac{1}{2}\right]^n.\end{equation} 
We define the energy functional
\begin{eqnarray*}
{\mathcal{F}}(v)&:=&
\frac{s\,(1-s)}{2}\iint_{\R^n\times Q}
\frac{|v(x)-v(y)|^2}{|x-y|^{n+2s}}\,dx\,dy+
\int_{Q} \frac{\mu |v|^3}{3}-\frac{\sigma v^2}{2}\,dx\\
&& -\frac{\tau}{2} \iint_{\R^n\times Q} J(x-y)\,v(x)\,v(y)\,dx\,dy.
\end{eqnarray*}
Then we consider the space~$X$ of functions~$v\in L^2(Q)$,
with~$v(x+k)=v(x)$ for any~$k\in \Z^n$ and a.e.~$x\in\R^n$. We have that~${\mathcal{F}}$
attains a minimum in~$X$, according to the following result:

\begin{lemma}\label{N:MI}
There exists~$v_*\in X$ such that~${\mathcal{F}}(v_*)\le
{\mathcal{F}}(v)$ for every~$v\in X$.
\end{lemma}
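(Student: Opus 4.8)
The plan is to prove Lemma~\ref{N:MI} by the direct method in the calculus of variations, paralleling the proof of Proposition~\ref{existence} but working on the torus. First I would establish a coercivity-type lower bound for~${\mathcal{F}}$ on~$X$. The convolution term is controlled by the same Young/H\"older argument used in Lemma~\ref{young}: since~$J$ is supported in~$B_\rho$, for a periodic~$v$ one has $\bigl|\iint_{\R^n\times Q} J(x-y)v(x)v(y)\,dx\,dy\bigr|\le \|J\|_{L^1}\|v\|_{L^2(Q)}^2=\|v\|_{L^2(Q)}^2$ after unfolding the $x$-integral over a bounded neighbourhood of~$Q$ and using periodicity; the cubic term dominates the quadratic ones via the Young inequality~\eqref{7bis} (applied with $\sigma$ replaced by $\|\sigma\|_{L^\infty}$ and using $\mu\ge\mu_o>0$), giving
$$\int_Q \frac{\mu|v|^3}{3}-\frac{\sigma v^2}{2}\,dx -\frac{\tau}{2}\iint_{\R^n\times Q}J(x-y)v(x)v(y)\,dx\,dy \;\ge\; -\kappa$$
for a constant~$\kappa$ depending only on $\|\sigma\|_{L^\infty}$, $\tau$ and $\mu_o$. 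Since ${\mathcal{F}}(0)=0$, a minimizing sequence~$v_j$ satisfies $0\ge{\mathcal{F}}(v_j)$ eventually, hence the Gagliardo-type seminorm $\iint_{\R^n\times Q}|v_j(x)-v_j(y)|^2|x-y|^{-n-2s}\,dx\,dy$ and (combined with the lower bound above) $\|v_j\|_{L^3(Q)}$ are uniformly bounded.

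Next I would extract a convergent subsequence. The uniform bound on the periodic Gagliardo seminorm together with the $L^2(Q)$ bound gives compactness: one localizes to a large cube, applies the fractional compact embedding (as in Theorem~7.1 of~\cite{guida}, cited already in the excerpt), and uses periodicity to conclude $v_j\to v_*$ strongly in $L^2(Q)$, in $L^q(Q)$ for $q<2^*_s$, and a.e.\ in~$\R^n$, with $v_*\in X$. Then I would pass to the limit term by term: the Gagliardo seminorm and $\int_Q \mu|v_j|^3$ are lower semicontinuous by Fatou's lemma; $\int_Q \sigma v_j^2\to\int_Q\sigma v_*^2$ by strong $L^2$ convergence and boundedness of~$\sigma$; and the convolution term converges by the same splitting used in~\eqref{asrewrqrwqerq}–\eqref{8ter}, writing $v_j(x)v_j(y)-v_*(x)v_*(y)=(v_j(x)-v_*(x))v_j(y)+v_*(x)(v_j(y)-v_*(y))$ and bounding each piece by $\|v_j-v_*\|_{L^2(Q)}$ times a uniformly bounded factor via the Young inequality for convolutions. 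Hence $\liminf_j{\mathcal{F}}(v_j)\ge{\mathcal{F}}(v_*)$, so $v_*$ is the desired minimizer.

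The main obstacle I anticipate is purely bookkeeping rather than conceptual: making precise the compactness step in the periodic setting, i.e.\ showing that a bound on $\iint_{\R^n\times Q}|v(x)-v(y)|^2|x-y|^{-n-2s}\,dx\,dy$ for $v\in X$ really controls the full Gagliardo norm on a fundamental domain (the tail of the $x$-integral over $\R^n\setminus(\text{large cube})$ must be handled using that $v$ is bounded in $L^2(Q)$ and the kernel is integrable at infinity away from the diagonal). Once this is in place, everything else is a routine transcription of the argument in Proposition~\ref{existence}. Note that, unlike in Proposition~\ref{existence}, one need not separately pass to $|v_*|$ here — nonnegativity of the minimizer and the associated Euler–Lagrange equation are addressed afterwards — so the proof ends at the existence of the minimizer in~$X$.
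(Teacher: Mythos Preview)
Your proposal is correct and follows essentially the same route as the paper: a direct-method argument with the convolution term bounded by $C\|v\|_{L^2(Q)}^2$ via Young's inequality and periodicity, the potential part bounded below via~\eqref{7bis}, uniform $L^3(Q)$ and Gagliardo bounds on a minimizing sequence, compactness in $L^2(Q)$ via the fractional embedding, and then Fatou for the seminorm and cubic term plus strong $L^2$ convergence for the remaining terms. The paper handles the convolution bound by localizing with $w_j:=|v_j|\chi_{B_{\rho+\sqrt{n}}}$ before applying Lemma~\ref{young}, whereas you exploit periodicity directly to get the sharper constant~$1$; and the paper simply cites the compact embedding without the bookkeeping you flag---but these are cosmetic differences, not substantive ones.
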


\begin{proof} First of all, we notice that~${\mathcal{F}}(0)=0$,
so we take a minimizing sequence~$v_j\in X$ such that
\begin{equation}\label{TA:1}
\lim_{j\to+\infty} {\mathcal{F}}(v_j) =\inf_X {\mathcal{F}}\end{equation}
and we may suppose that
\begin{equation}\label{KA:1}
{\mathcal{F}}(v_j)\le0.\end{equation} 
Our goal is to obtain estimates that are uniform in~$j$.

Letting~$w_j:= |v_j|\chi_{B_{\rho+\sqrt{n}}}$ and recalling Lemma~\ref{young}, we see that
\begin{eqnarray*}
&& \left|\iint_{\R^n\times Q} J(x-y)\,v_j(x)\,v_j(y)\,dx\,dy\right|
\le \iint_{B_{\rho+\sqrt{n}}\times Q} 
J(x-y)\,|v_j(x)|\,|v_j(y)|\,dx\,dy
\\ &&\qquad
\le \iint_{\R^{2n}} J(x-y)\,w_j(x)\,w_j(y)\,dx\,dy\le
\| w_j\|_{L^2(B_{\rho+\sqrt{n}})}^2 \le C\, \| v_j\|_{L^2(Q)}^2,\end{eqnarray*}
for some~$C>0$, possibly depending on~$\rho$ and~$n$.
Hence, 
\begin{equation}\label{Q1:A}
\begin{split}
& \int_{Q} \frac{\mu |v_j|^3}{3}-\frac{\sigma v_j^2}{2}\,dx
-\frac{\tau}{2} \iint_{\R^n\times Q} J(x-y)\,v_j(x)\,v_j(y)\,dx\,dy
\\ &\qquad\ge \int_{Q} \frac{\mu |v_j|^3}{3}-\frac{\sigma v_j^2}{2}\,dx
-\frac{C\tau}{2} \int_{Q} v_j^2\,dx
.\end{split}\end{equation}
Using this and~\eqref{7bis} (with~$C\tau$ in the place of~$\tau$), we get
\begin{eqnarray*}
&& \int_{Q} \frac{\mu |v_j|^3}{3}-\frac{\sigma v_j^2}{2}\,dx
-\frac{\tau}{2} \iint_{\R^n\times Q} J(x-y)\,v_j(x)\,v_j(y)\,dx\,dy
\\&& \qquad\ge-\int_{Q} \frac{(\sigma+C\tau)^3}{6\mu^2}\,dx
=: -\kappa,\end{eqnarray*}
where~$\kappa>0$ depends on~$\sigma$, $\tau$, $\mu$, $\rho$ and~$n$. As a consequence
of this and~\eqref{KA:1}, we obtain
\begin{equation}\label{LA:1}
\frac{s\,(1-s)}{2}\iint_{\R^n\times Q}
\frac{|v_j(x)-v_j(y)|^2}{|x-y|^{n+2s}}\,dx\,dy\le \kappa.\end{equation}
In addition, utilizing~\eqref{KA:1} and~\eqref{Q1:A}, we have that
$$ \int_{Q} \frac{\mu |v_j|^3}{3}-\frac{\sigma v_j^2}{2}\,dx
-\frac{C\tau}{2} \int_{Q} v_j^2\,dx\le0,$$
and so, by H\"older Inequality,
$$ \int_{Q} \frac{\mu |v_j|^3}{3} \le 
\frac{\|\sigma\|_{L^\infty(Q)}+C\tau}{2} \left(\int_{Q} v_j^3\,dx\right)^{2/3}.$$
Accordingly, $\| v_j\|_{L^3(Q)}$ is bounded uniformly in~$j$
and therefore $\| v_j\|_{L^2(Q)}$ is also bounded uniformly in~$j$.

{F}rom this and~\eqref{LA:1},
it follows that~$v_j$ is precompact in~$L^2(Q)$
(see e.g. Theorem~7.1 in~\cite{guida}). Thus, up to a subsequence,
we may assume that~$v_j\to v_*$ in~$L^2(Q)$ and a.e. in~$Q$
(and thus, by periodicity, a.e. in~$\R^n$), as~$j\to+\infty$.
Notice also that~$v_*$ is periodic, since so is~$v_j$. 
This gives that~$v_*\in X$.
Furthermore, using the convergence of~$v_j$ and Fatou Lemma,
$$ \liminf_{j\to+\infty}
\frac{s\,(1-s)}{2}\iint_{\R^n\times Q}
\frac{|v_j(x)-v_j(y)|^2}{|x-y|^{n+2s}}\,dx\,dy\ge
\frac{s\,(1-s)}{2}\iint_{\R^n\times Q}
\frac{|v_*(x)-v_*(y)|^2}{|x-y|^{n+2s}}\,dx\,dy,$$
\begin{eqnarray*}
&& \liminf_{j\to+\infty}\int_{Q} \frac{\mu |v_j|^3}{3}\,dx\ge
\int_{Q} \frac{\mu |v_*|^3}{3}\,dx\\
{\mbox{and }}&& \lim_{j\to+\infty} \int_{Q}\frac{\sigma v_j^2}{2}\,dx
=\int_{Q}\frac{\sigma v_*^2}{2}\,dx .\end{eqnarray*}
Moreover,
\begin{eqnarray*}
&& \left|\iint_{\R^n\times Q} J(x-y)\,v_j(x)\,v_j(y)\,dx\,dy
- \iint_{\R^n\times Q} J(x-y)\,v_*(x)\,v_*(y)\,dx\,dy\right|\\
&\le&
\iint_{B_{\rho+\sqrt{n}}\times Q} J(x-y)\,\big|v_j(x)\big|\,\big|v_j(y)-v_*(y)\big|\,dx\,dy
\\&&\qquad+\iint_{B_{\rho+\sqrt{n}}\times Q} J(x-y)\,\big|v_j(x)-v_*(x)\big|\,\big|v_*(y)\big|\,dx\,dy\\
&\le& C\,\Big(\|v_j\|_{L^2(Q)}\, \|v_j-v_*\|_{L^2(Q)}+\|v_j-v_*\|_{L^2(Q)}\, \|v_*\|_{L^2(Q)}\Big),
\end{eqnarray*}
thanks to Lemma~\ref{young}, and the latter quantity is infinitesimal
as~$j\to+\infty$. These considerations and~\eqref{TA:1} give that
$$ {\mathcal{F}}(v_*) =\inf_X {\mathcal{F}},$$
so the desired result follows.
\end{proof}

Now we can complete the proof of Theorem~\ref{0o56:TH}
by considering the minimizer produced by
Lemma~\ref{N:MI} and by checking that periodic perturbations
indeed give~\eqref{EQ:PER} as Euler-Lagrange equation.

\begin{proof}[Proof of Theorem~\ref{0o56:TH}] Let~$v_*$ be as in Lemma~\ref{N:MI} and~$u:=|v_*|$.
Then
\begin{equation}\label{45:A} {\mathcal{F}}(u)\le {\mathcal{F}}(v_*)\le
{\mathcal{F}}(v)\qquad{\mbox{ for every~$v\in X$.}}\end{equation}
Now we take~$\psi\in C^\infty_0(Q)$ and we consider its periodic
extension in~$\R^n$, that is
$$ \phi(x):= \sum_{k\in\Z^n} \psi(x+k).$$
Using~$v:=u+\epsilon \phi$ as test function in~\eqref{45:A}, we obtain
that
\begin{equation}\label{AS:IO:1}
\begin{split}
& {s\,(1-s)}\iint_{\R^n\times Q}
\frac{\big(u(x)-u(y)\big)\big(\phi(x)-\phi(y)\big)}{|x-y|^{n+2s}}\,dx\,dy+
\int_{Q} \mu u^2\phi-\sigma u\phi\,dx\\
&\qquad -\frac{\tau}{2} \iint_{\R^n\times Q} J(x-y)\,u(x)\,\phi(y)\,dx\,dy
-\frac{\tau}{2} \iint_{\R^n\times Q} J(x-y)\,\phi(x)\,u(y)\,dx\,dy=0.
\end{split}\end{equation}
Now we write
$$ \R^n = \bigcup_{k\in\Z^n} (Q+k)$$
and thus, using the substitutions~$\tilde x:=x-k$ and~$\tilde y:=y-k$,
\begin{equation}\label{AS:IO:2}
\begin{split}
& \iint_{\R^{2n}}
\frac{\big(u(x)-u(y)\big)\big(\psi(x)-\psi(y)\big)}{|x-y|^{n+2s}}\,dx\,dy
\\ &\qquad=\sum_{k\in\Z^n} 
\iint_{\R^{n}\times (Q+k)}
\frac{\big(u(x)-u(y)\big)\big(\psi(x)-\psi(y)\big)}{|x-y|^{n+2s}}\,dx\,dy
\\ &\qquad =\sum_{k\in\Z^n} 
\iint_{\R^{n}\times Q}
\frac{\big(u(\tilde x+k)-u(\tilde y+k)\big)
\big(\psi(\tilde x+k)-\psi(\tilde y+k)\big)}{
|\tilde x-\tilde y|^{n+2s}}\,d\tilde x\,d\tilde y 
\\ &\qquad =
\sum_{k\in\Z^n}
\iint_{\R^{n}\times Q}
\frac{\big(u(\tilde x)-u(\tilde y)\big)
\big(\psi(\tilde x+k)-\psi(\tilde y+k)\big)}{
|\tilde x-\tilde y|^{n+2s}}\,d\tilde x\,d\tilde y
\\ &\qquad =
\iint_{\R^{n}\times Q}
\frac{\big(u(\tilde x)-u(\tilde y)\big)\,\sum_{k\in\Z^n}
\big(\psi(\tilde x+k)-\psi(\tilde y+k)\big)}{
|\tilde x-\tilde y|^{n+2s}}\,d\tilde x\,d\tilde y
\\ &\qquad =
\iint_{\R^n\times Q}
\frac{\big(u(\tilde x)-u(\tilde y)\big)\big(\phi(\tilde x)-\phi(\tilde y)\big)}{|\tilde x-\tilde y|^{n+2s}}\,d\tilde x\,d\tilde y.
\end{split}\end{equation}
Similarly,
\begin{equation}\label{AS:IO:3}
\begin{split}
& \iint_{\R^{2n}} J(x-y)\,\psi(x)\,u(y)\,dx\,dy
=\sum_{k\in\Z^n} \iint_{\R^{n}\times (Q+k)} J(x-y)\,\psi(x)\,u(y)\,dx\,dy\\
&\qquad= \sum_{k\in\Z^n} \iint_{\R^{n}\times Q} J(\tilde x-\tilde y)\,
\psi(\tilde x+k)\,u(\tilde y)\,dx\,dy
=
\iint_{\R^n\times Q} J(\tilde x-\tilde y)\,\phi(\tilde x)\,u(\tilde y)\,d\tilde x\,d\tilde y.
\end{split}\end{equation}
So, we insert \eqref{AS:IO:2} and~\eqref{AS:IO:3}
into~\eqref{AS:IO:1} and we obtain that
\begin{eqnarray*}
&& {s\,(1-s)}\iint_{\R^{2n}}
\frac{\big(u(x)-u(y)\big)\big(\psi(x)-\psi(y)\big)}{|x-y|^{n+2s}}\,dx\,dy+
\int_{\R^n} \mu u^2\psi-\sigma u\psi\,dx\\
&&\qquad -{\tau} \iint_{\R^{2n}} J(x-y)\,u(x)\,\psi(y)\,dx\,dy=0.
\end{eqnarray*}
This gives that~$u$ is a solution of the desired equation in~$Q$
(and thus in the whole of~$\R^n$, by periodicity).

We also claim that
\begin{equation}\label{0o56:BIS}
{\mbox{$u>0$ in $\R^n$.}}
\end{equation}
The proof is by contradiction: if there exists~$x_o$ for which~$u(x_o)=0$,
then, by Lemma~\ref{positive}, we see that~$u$ vanishes identically.
In particular, by~\eqref{45:A},
\begin{equation}\label{0o56}
0={\mathcal{F}}(0)= {\mathcal{F}}(u)\le {\mathcal{F}}(\epsilon),\end{equation}
where~$\epsilon>0$ is a fixed constant. On the other hand,
$$ {\mathcal{F}}(\epsilon)=
\frac{c_1\epsilon^3}{3}-\frac{c_2\epsilon^2}{2}
-\frac{\tau\epsilon^2}{2},$$
where
$$ c_1:= \int_Q \mu\,dx \qquad{\mbox{ and }}
c_2:= \int_Q \sigma\,dx.$$
Notice that~$c_3:=\frac{c_2}{2}+\frac{\tau}{2}>0$, thanks to~\eqref{KJA:AKK},
and thus~${\mathcal{F}}(\epsilon) =\frac{c_1\epsilon^3}{3} -c_3\epsilon^2<0$
for small~$\epsilon$. This is in contradiction with~\eqref{0o56}
and so it proves~\eqref{0o56:BIS}. This completes the proof of Theorem~\ref{0o56:TH}.
\end{proof}

Now we establish Theorem~\ref{0o56:TH:2} via some algebraic and analytical
identities.

\begin{proof}[Proof of Theorem~\ref{0o56:TH:2}] Let~$Q$ be as in~\eqref{defQ}. We define
\begin{equation}\label{JAQ:A}
m:= \int_Q u(x)\,dx \;{\mbox{ and }}\; v(x):=u(x)-m.\end{equation}
Notice that
\begin{equation}\label{JAQ:aaA:2}
m>0,\end{equation}
due to the sign of~$u$, and
\begin{equation}\label{M:nu}
\int_Q v(x)\,dx=0.\end{equation}
Also, since~$u$ is periodic, there exists a minimal point~$x_o$,
that is
\begin{equation}\label{U:lak}
u(x_o)=\min_{Q} u=\min_{\R^n} u.
\end{equation}
Thus, since~$u$ and $v$ differ by a constant, it follows that
$$ v(x_o)=\min_{Q} v=\min_{\R^n} v.$$
This and~\eqref{M:nu} give that
\begin{equation}\label{M:nu:2}
0=\int_Q v(x)\,dx\ge v(x_o).\end{equation}
Now we point out that, for any~$y\in\R^n$,
\begin{equation}\label{M:nu:20}
\int_Q u(x+y)\,dx=m,\end{equation}
due to~\eqref{JAQ:A} and the periodicity of~$u$.
Therefore, if we
fix~$\delta>0$, we see that
$$ \int_Q \left[
\int_{\R^n\setminus B_\delta} \frac{u(x+y)+u(x-y)-2u(x)}{|y|^{n+2s}}\,dy
\right]\,dx
=\int_{\R^n\setminus B_\delta} \frac{m+m-2m}{|y|^{n+2s}}\,dy=0$$
and so, by taking~$\delta\to0$,
\begin{equation}\label{M:nu:3}
\int_Q (-\Delta)^s u(x)\,dx=0.\end{equation}
Moreover, using again~\eqref{M:nu:20}, we find that
$$ \int_Q (J*u)(x)\,dx 
=\int_Q \left[ \int_{\R^n} J(y)\,u(x-y)\,dy\right]\,dx
=m \int_{\R^n} J(y)\,dy=m.$$
Using this, \eqref{M:nu},
\eqref{M:nu:3} and the equation for~$u$, we conclude that
\begin{equation*}
\begin{split}
& 0 = \int_Q (-\Delta)^s u(x)\,dx =
\int_Q \Big(
(\sigma-\mu u)u+\tau (J*u)\Big)\,dx
=\sigma m-\mu\int_Q u^2\,dx+\tau m\\
&\qquad=
\sigma m-\mu\int_Q \big(v^2+m^2 +2mv\big)\,dx+\tau m
=\sigma m -\mu\int_Q v^2\,dx -\mu m^2 +\tau m.\end{split}\end{equation*}
This says that
\begin{equation}\label{01HAJ:A}
\mu\int_Q v^2\,dx = m\,(\sigma+\tau -\mu m).
\end{equation}
Now, we observe that, 
$$ (-\Delta)^s u(x_o)\le0,$$
thanks to~\eqref{U:lak}. 

In addition, from~\eqref{U:lak} we also deduce that
$$ (J*u)(x) =\int_{\R^n} J(y)\,u(x-y)\,dy \ge \int_{\R^n} J(y)\,u(x_o)\,dy
=u(x_o),$$
for every~$x\in\R^n$. 
Hence, we
compute the equation at~$x_o$ and we find that
\begin{eqnarray*} 
&& 0\ge (-\Delta)^s u(x_o) = (\sigma-\mu u(x_o))u(x_o)+\tau (J*u)(x_o)
\\ &&\qquad \ge (\sigma-\mu u(x_o))u(x_o)+\tau u(x_o)=
u(x_o) \,\big( \sigma+\tau-\mu u(x_o)\big).\end{eqnarray*}
Therefore, since~$u(x_o)>0$, we conclude that
$$ \sigma+\tau-\mu u(x_o)\le 0$$
and then
$$ \sigma+\tau -\mu m \le \mu \big( u(x_o)-m\big) =\mu \,v(x_o).$$
We insert this into~\eqref{01HAJ:A} and we recall~\eqref{JAQ:aaA:2},
in order to obtain that
$$ \mu\int_Q v^2\,dx = m\,(\sigma+\tau -\mu m)\le m\mu\,v(x_o).$$
Thus, by~\eqref{M:nu:2},
$$ \mu\int_Q v^2\,dx\le0,$$
which implies that~$v$ vanishes identically.
Accordingly, by~\eqref{JAQ:A}, we obtain that~$u$
is constant and constantly equal to~$m$.
We insert this information into the equation and we obtain that
$$ 0= (\sigma-\mu m)m+\tau (J*m) = (\sigma-\mu m)m+\tau m=
(\sigma+\tau-\mu m)m.$$
Recalling~\eqref{JAQ:aaA:2}, we then obtain that~$\sigma+\tau-\mu m=0$
and so~$m=(\sigma+\tau)/\mu$, as desired.
\end{proof}

\section{A transmission problem and proof of Theorems~\ref{TR:MOD:MI}
and~\ref{COP:TP}}\label{TRA:SEC}

Now we consider the transmission problem introduced in~\eqref{TP}
and we prove the existence of minimizers.

\begin{proof}[Proof of Theorem~\ref{TR:MOD:MI}]
We let~$u_j$ be a minimizing sequence. Using
the Young Inequality 
$$ ab \le \frac{a^p}{p} + \frac{(p-1)\,b^{\frac{p}{p-1}}}{p}$$
with exponents~$p=3/2$, $a=4^{-\frac23}\mu^{\frac23}u^2$ and $
b=2^{\frac13}\mu^{-\frac23}\sigma$, 
we see that
$$ \frac{\sigma \,u^2}{2} \le \frac{\mu\,|u|^3}{6}+
\frac{2\sigma^3}{3\mu^2}.$$
As a consequence,
\begin{eqnarray*}
&& 0={\mathcal{T}}(0)\ge {\mathcal{T}}(u_j)\\
&&\qquad\ge
\frac12 \int_{\Omega_1}|\nabla u_j|^2\,dx
+\frac{s\,(1-s)}{2}\iint_{\Omega_2\times\Omega_2}
\frac{|u_j(x)-u_j(y)|^2}{|x-y|^{n+2s}}\,dx\,dy\\
&&\qquad\qquad+\sum_{i=1}^2
\frac{\nu_i\,s_i\,(1-s_i)}{2}\iint_{\Omega_i\times(\R^n\setminus\Omega_i)}
\frac{|u_j(x)-u_j(y)|^2}{|x-y|^{n+2s_i}}\,dx\,dy
+\int_\Omega \frac{\mu\,|u_j|^3}{6} - 
c_o,\end{eqnarray*}
where
$$ c_o :=\int_\Omega
\frac{2\sigma^3}{3\mu^2}\,dx.$$
In particular,
$$ \frac{\mu_o}{6}
\int_\Omega |u_j|^3\,dx\le c_o,$$
which gives a uniform bound in~$j$ of~$\|u_j\|_{L^2(\Omega)}$.
Also,
$$ \frac12 \int_{\Omega_1}|\nabla u_j|^2\,dx
+\frac{s\,(1-s)}{2}\iint_{\Omega_2\times\Omega_2}
\frac{|u_j(x)-u_j(y)|^2}{|x-y|^{n+2s}}\,dx\,dy\le c_o,$$
therefore, by compactness (see e.g. Theorem~7.1
in~\cite{guida}), we find that, up to a subsequence, $u_j\to u$
in~$L^2(\Omega)$ and
a.e. in~$\Omega_1\cup\Omega_2$, with~$\nabla u_j$ converging
to~$\nabla u$ weakly in~$L^2(\Omega_1)$, for some function~$u$
vanishing outside~$\Omega$. {F}rom this, the desired result follows.
\end{proof}

The following is a maximum principle related to
the transmission problem~\eqref{TP}:

\begin{lemma}\label{MP:TP}
Let~$u$ be
a nonnegative solution of~\eqref{EQ:TP}.
Then either~$u>0$ in~$\Omega_1\cup\Omega_2$ or it vanishes identically.
\end{lemma}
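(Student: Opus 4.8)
The plan is to mimic the strategy of Lemma~\ref{positive}, adapting it to the mixed local/nonlocal structure of~\eqref{EQ:TP}. Suppose~$u$ is a nonnegative solution of~\eqref{EQ:TP} that does not vanish identically, and assume by contradiction that~$u(z)=0$ for some~$z\in\Omega_1\cup\Omega_2$. The first step is to observe that the right-hand side of both equations in~\eqref{EQ:TP}, evaluated at~$z$, equals~$(\sigma(z)-\mu(z)u(z))u(z)=0$, since~$u(z)=0$. Hence all the diffusion/transmission terms on the left-hand side of the appropriate equation must sum to zero at~$z$. Each of these terms is of the form~$\int \frac{u(z)-u(y)}{|z-y|^{n+2s_\bullet}}\,dy=-\int\frac{u(y)}{|z-y|^{n+2s_\bullet}}\,dy\le0$ (using~$u(z)=0$ and~$u\ge0$), with appropriate positive constants in front; thus each term is nonpositive, and their sum being zero forces each to vanish. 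The subtle point is the local term: if~$z\in\Omega_1$, then besides the two nonlocal integrals we also have~$-\Delta u(z)$, which is \emph{not} a priori sign-definite, so this case needs a separate argument.

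For the case~$z\in\Omega_2$ (the purely nonlocal block), the argument closes immediately: the vanishing of the nonlocal term~$PV\int_{\Omega_2}\frac{u(z)-u(y)}{|z-y|^{n+2s}}\,dy = -PV\int_{\Omega_2}\frac{u(y)}{|z-y|^{n+2s}}\,dy$ forces~$u\equiv0$ a.e.\ in~$\Omega_2$; then the vanishing of the transmission terms involving integration over~$\Omega_1$ and~$\R^n\setminus\Omega_2$ propagates this, giving~$u\equiv0$ a.e.\ in~$\R^n$, contradicting the assumption. For the case~$z\in\Omega_1$, the plan is to note that since~$u=0$ outside~$\Omega$ and~$u(z)=0$ is the minimum of~$u$ over~$\R^n$ (because~$u\ge0$ and~$u(z)=0$), the point~$z$ is an interior minimum of the smooth-enough function~$u$ restricted to~$\Omega_1$ (pointwise validity of~\eqref{EQ:TP} is granted by the references cited after~\eqref{EQ:TP}, giving enough regularity), whence~$-\Delta u(z)\le 0$. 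Combined with the nonpositivity of the two nonlocal terms at~$z$, the equation in~$\Omega_1$ forces all three to vanish; in particular the two nonlocal integrals~$-\int_{\R^n\setminus\Omega_1}\frac{u(y)}{|z-y|^{n+2s_1}}\,dy$ and~$-\int_{\Omega_2}\frac{u(y)}{|z-y|^{n+2s_2}}\,dy$ vanish, which forces~$u\equiv 0$ a.e.\ on~$(\R^n\setminus\Omega_1)\cup\Omega_2 \supseteq \Omega_2$, and then — feeding this back into the equation on~$\Omega_2$, or simply combining with~$u=0$ outside~$\Omega$ — one gets~$u\equiv 0$ everywhere, again a contradiction.

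The main obstacle I anticipate is handling the local diffusion term cleanly: unlike in Lemma~\ref{positive}, where the fractional Laplacian at a zero of a nonnegative function is automatically~$\le0$ with the sign \emph{strict} if~$u\not\equiv0$, here at a point~$z\in\Omega_1$ we only get~$-\Delta u(z)\le0$ from the interior-minimum property, which is weaker. The resolution is precisely to extract information from the \emph{nonlocal} transmission terms attached to the~$\Omega_1$-equation — these are the ones that carry the strict sign — rather than from~$-\Delta u$ itself; the local term only needs to be shown nonpositive, not strictly negative. One should also be slightly careful that a contact point~$z$ in the \emph{interior} of~$\Omega_1$ exists (rather than only on~$\partial\Omega_1$): but since~$u$ is continuous up to the boundary, vanishes outside~$\Omega$, and is assumed not identically zero while attaining the value~$0$ somewhere in~$\overline{\Omega_1\cup\Omega_2}$, one reduces to an interior zero in one of the two domains by a standard covering/continuity argument, after which the above dichotomy applies.
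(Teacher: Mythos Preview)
Your proposal is essentially the same argument as the paper's proof: split into the cases~$z\in\Omega_2$ and~$z\in\Omega_1$, use that~$z$ is a global minimum so that~$-\Delta u(z)\le 0$ in the local block, observe that all the nonlocal integrals at~$z$ are nonpositive, and conclude from the equation that each vanishes, which forces~$u=0$ on the complementary region; then bootstrap between the two cases.

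Two small points. First, in your~$\Omega_1$ case you offer the alternative ``or simply combining with~$u=0$ outside~$\Omega$'' to conclude~$u\equiv 0$ everywhere; this does \emph{not} close the argument, since~$(\R^n\setminus\Omega_1)\cup\Omega_2=\R^n\setminus\Omega_1$ already contains~$\R^n\setminus\Omega$, and you still have no information on~$\Omega_1$ itself. Your first alternative (``feeding this back into the equation on~$\Omega_2$'', i.e.\ running the~$\Omega_2$ case at a point of~$\Omega_2$) is the correct route and is exactly what the paper does. Second, your final paragraph's worry about boundary zeros is unnecessary: the statement concerns positivity in the \emph{open} set~$\Omega_1\cup\Omega_2$, so a failure automatically produces an interior zero~$z$.
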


\begin{proof} Assume that~$u$ vanishes somewhere
in~$\Omega_1\cup\Omega_2$. We claim that
\begin{equation}\label{CASE.1}
{\mbox{if $u$ vanishes somewhere in~$\Omega_2$, then it vanishes identically
in $\Omega_1\cup(\R^n\setminus\Omega_2)$.}}
\end{equation}
To prove this, we suppose that~$u(\bar x)=0$, for some~$\bar x\in\Omega_2$.
Then~$\bar x$ minimizes~$u$ and so
$$ PV\,\int_{\Omega_2}\frac{u(\bar x)-u(y)}{|\bar x-y|^{n+2s}}\,dy\le0,$$
$$ \int_{\Omega_1}
\frac{u(\bar x)-u(y)}{|\bar x-y|^{n+2s_1}}\,dy\le0\qquad{\mbox{ and }}\qquad
\int_{\R^n\setminus\Omega_2}
\frac{u(\bar x)-u(y)}{|\bar x-y|^{n+2s_2}}\,dy\le0.$$
These inequalities and~\eqref{EQ:TP} imply that indeed
$$ \int_{\Omega_1}
\frac{u(\bar x)-u(y)}{|\bar x-y|^{n+2s_1}}\,dy=0\qquad{\mbox{ and }}\qquad
\int_{\R^n\setminus\Omega_2}
\frac{u(\bar x)-u(y)}{|\bar x-y|^{n+2s_2}}\,dy=0,$$
and this says that~$u(y)=u(\bar x)=0$ in the whole of~$\Omega_1\cup(\R^n
\setminus\Omega_2)$,
thus proving~\eqref{CASE.1}.

Now we show that
\begin{equation}\label{CASE.2}
{\mbox{if $u$ vanishes somewhere in~$\Omega_1$, then it vanishes identically
in $\Omega_2\cup(\R^n\setminus\Omega_1)$.}}
\end{equation}
To this end, let~$x_o\in\Omega_1$
such that~$u(x_o)=0$. In particular, $x_o$ minimizes~$u$,
therefore~$ \Delta u(x_o)\ge0$, 
$$ \int_{\R^n\setminus \Omega_1}
\frac{u(x_o)-u(y)}{|x_o-y|^{n+2s_1}}\,dy\le0\qquad{\mbox{ and }}\qquad
\int_{\Omega_2}
\frac{u(x_o)-u(y)}{|x_o-y|^{n+2s_2}}\,dy \le0 .$$
These inequalities and~\eqref{EQ:TP} imply that
$$ \int_{\R^n\setminus \Omega_1}
\frac{u(x_o)-u(y)}{|x_o-y|^{n+2s_1}}\,dy=0\qquad{\mbox{ and }}\qquad
\int_{\Omega_2}
\frac{u(x_o)-u(y)}{|x_o-y|^{n+2s_2}}\,dy =0 .$$
In consequence of these equalities, we conclude that~$u(y)=u(x_o)=0$
for any~$y\in (\R^n\setminus \Omega_1)\cup\Omega_2$,
and this establishes~\eqref{CASE.2}.

Now suppose that~$u$ vanishes somewhere in~$\Omega_1$
(resp.~$\Omega_2$). Then, by~\eqref{CASE.2} (resp.,~\eqref{CASE.1}),
we know that~$u$ vanishes identically
in $\Omega_2\cup(\R^n\setminus\Omega_1)$
(resp., in $\Omega_1\cup(\R^n\setminus\Omega_2)$).
Accordingly, by~\eqref{CASE.1} (resp.,~\eqref{CASE.2}),
we obtain that~$u$ vanishes identically
in $\Omega_1\cup(\R^n\setminus\Omega_2)$
(resp., in $\Omega_2\cup(\R^n\setminus\Omega_1)$).
All in all, we find that~$u$ vanishes identically
in~$\Omega_2\cup(\R^n\setminus\Omega_1)\cup
\Omega_1\cup(\R^n\setminus\Omega_2)=\R^n$, as desired.
\end{proof}

Now we establish the results related to the spectral analysis
of the transmission problem~\eqref{TP}:

\begin{proof}[Proof of Theorem~\ref{COP:TP}]
We let~$e_\star$ be the first eigenfunction of the problem, i.e.
the minimizer which attains the infimum in~\eqref{89RFUSjKaA}.
That such minimum is attained follows by a compactness
argument, as the one in the proof of Theorem~\ref{TR:MOD:MI}.
By construction,
\begin{eqnarray*}
&&\int_{\Omega_1} \nabla e_\star\cdot\nabla\phi\,dx
+s\,(1-s)\iint_{\Omega_2\times\Omega_2}
\frac{(e_\star(x)-e_\star(y))(\phi(x)-\phi(y))}{|x-y|^{n+2s}}\,dx\,dy
\\ &&\qquad+\sum_{i=1}^2
\nu_i\,s_i\,(1-s_i)\iint_{
\Omega_i\times(\R^n\setminus\Omega_i)}
\frac{(e_\star(x)-e_\star(y))(\phi(x)-\phi(y))}{|x-y|^{n+2s_i}}\,dx\,dy
=\lambda_\star(\Omega)\,
\int_\Omega e_\star\,\phi\,dx\end{eqnarray*}
for any test function~$\phi$, and so
\begin{equation}\label{KAJ:PA678}
\begin{split}
&\int_{\Omega_1} |\nabla e_\star|^2\,dx
+s\,(1-s)\iint_{\Omega_2\times\Omega_2}
\frac{|e_\star(x)-e_\star(y)|^2}{|x-y|^{n+2s}}\,dx\,dy
\\ &\qquad+\sum_{i=1}^2
\nu_i\,s_i\,(1-s_i)\iint_{\Omega_i\times(\R^n\setminus\Omega_i)}
\frac{|e_\star(x)-e_\star(y)|^2}{|x-y|^{n+2s_i}}\,dx\,dy
=\lambda_\star(\Omega)\,\int_\Omega |e_\star|^2\,dx.\end{split}\end{equation}
Also, we may assume that~$e_\star\ge0$, since taking the absolute
value of a candidate may only decrease the energy, and in fact
\begin{equation}\label{E0MP:PT}
{\mbox{$e_\star>0$
in~$\Omega_1\cup\Omega_2$,}}\end{equation}
thanks to the maximum principle in Lemma~\ref{MP:TP}.

Given~$M>0$, we set
$$ e_M(x):= \left\{
\begin{matrix}
e_\star(x) & {\mbox{ if }} e_\star(x)<M,\\
M & {\mbox{ if }} e_\star(x)\ge M.
\end{matrix}
\right.$$
By the Fatou Lemma,
$$ \liminf_{M\to+\infty} \int_\Omega \sigma\, e_M^2\,dx
\ge \int_\Omega \sigma \, e_\star^2\,dx,$$
and therefore
\begin{equation}\label{FAT:TP}
\liminf_{M\to+\infty} \int_\Omega \sigma\, e_M^2 - \lambda_\star(\Omega)\,e^2_\star\,dx
\ge 
\int_\Omega (\sigma- \lambda_\star(\Omega))\,e^2_\star\,dx =: c_\star.
\end{equation}
After these considerations, we proceed with the proof of Theorem~\ref{COP:TP}.

First, we suppose that~$\sup_{\Omega}\sigma\le\lambda_\star(\Omega)$.
We aim to show that all solutions of~\eqref{EQ:TP}
are trivial. Assume, by contradiction, that there exists
a nontrivial solution~$u$. Then, by Lemma~\ref{MP:TP},
we know that~$u>0$ in~$\Omega_1\cup\Omega_2$.

Now, we write the weak formulation of~\eqref{EQ:TP} as 
\begin{eqnarray*} 
&&\int_{\Omega_1}\nabla u\cdot\nabla\phi\,dx 
+s\,(1-s)\iint_{\Omega_2\times\Omega_2} 
\frac{(u(x)-u(y))(\phi(x)-\phi(y))}{|x-y|^{n+2s}}\,dx\,dy\\ 
&&\qquad+\sum_{i=1}^2 
{\nu_i\,s_i\,(1-s_i)}\iint_{\Omega_i\times(\R^n\setminus\Omega_i)}
\frac{(u(x)-u(y))(\phi(x)-\phi(y))}{|x-y|^{n+2s_i}}\,dx\,dy \\&&\qquad
+\int_\Omega {\mu\,u^2\,\phi} - {\sigma \,u^2}\,dx\;=\;0
,\end{eqnarray*} for any test function~$\phi$, and we choose~$\phi:=u$. 
Hence, we find that 
\begin{eqnarray*} &&\int_{\Omega_1}|\nabla u|^2\,dx 
+s\,(1-s)\iint_{\Omega_2\times\Omega_2} 
\frac{|u(x)-u(y)|^2}{|x-y|^{n+2s}}\,dx\,dy\\ &&\qquad+\sum_{i=1}^2 
\nu_i\,s_i\,(1-s_i)\iint_{
\Omega_i\times(\R^n\setminus\Omega_i)}
\frac{|u(x)-u(y)|^2}{|x-y|^{n+2s_i}}\,dx\,dy \\&&\qquad
+\int_\Omega {\mu\,|u|^3} - {\sigma \,u^2}\,dx
\;=\;0
.\end{eqnarray*}
As a consequence,
\begin{eqnarray*}
\lambda_\star(\Omega) &\le&
\|u\|_{L^2(\Omega)}^{-2}\,{\mathcal{T}}_o(u)\\
&=&
\|u\|_{L^2(\Omega)}^{-2}\,
\int_\Omega {\sigma \,u^2}-{\mu\,|u|^3} \,dx
\\ &<&\|u\|_{L^2(\Omega)}^{-2}\, \int_\Omega {\sigma \,u^2}\,dx
\\ &\le& \|u\|_{L^2(\Omega)}^{-2}\, \int_\Omega {\lambda_\star(\Omega)
\,u^2}\,dx\\
&=&
\lambda_\star(\Omega),
\end{eqnarray*}
which is a contradiction.
This establishes the first claim in
Theorem~\ref{COP:TP}, so we can now focus on the second claim.
To this goal, we now assume that
$\inf_{\Omega}\sigma\ge\lambda_\star(\Omega)$ with strict
inequality on a set of positive measure. 
Therefore, recalling~\eqref{E0MP:PT},
we have that, in this case,
$$ c_\star>0 $$
and so, in light of~\eqref{FAT:TP}, we can fix~$M_\star$
sufficiently large such that, for any~$M\ge M_\star$,
$$ \int_\Omega \sigma\, e_M^2 - \lambda_\star(\Omega)\,e^2_\star\,dx
\ge \frac{c_\star}{2} >0.$$
So, from now on, we can fix~$M=M_\star$, and the inequality above
holds true. In consequence of these observations
and recalling~\eqref{KAJ:PA678}, we have that
\begin{eqnarray*}
{\mathcal{T}}_o(e_M)&\le&
\int_{\Omega_1}|\nabla e_\star|^2\,dx
+s\,(1-s)\iint_{\Omega_2\times\Omega_2}
\frac{|e_\star(x)-e_\star(y)|^2}{|x-y|^{n+2s}}\,dx\,dy
\\ &&\qquad+\sum_{i=1}^2
\nu_i\,s_i\,(1-s_i)\iint_{
\Omega_i\times(\R^n\setminus\Omega_i)}
\frac{|e_\star(x)-e_\star(y)|^2}{|x-y|^{n+2s_i}}\,dx\,dy \\
&=& \lambda_\star(\Omega)\,\int_\Omega |e_\star|^2\,dx\\
&\le& -\frac{c_\star}{2} + \int_\Omega \sigma\, e_M^2\,dx.
\end{eqnarray*}
Accordingly, for any~$\epsilon>0$,
\begin{eqnarray*}
{\mathcal{T}}(\epsilon e_M)&=&
\epsilon^2\,{\mathcal{T}}_o(e_M)
+\int_\Omega \epsilon^3\,\frac{\mu\,|e_M|^3}{3} - 
\epsilon^2\,\frac{\sigma \,e_M^2}{2}\,dx \\
&\le& -\frac{c_\star\,\epsilon^2}{2} 
+\epsilon^3\int_\Omega \frac{\mu\,|e_\star|^3}{3},\end{eqnarray*}
which is negative if~$\epsilon$ is suitably small.
As a consequence, ${\mathcal{T}}(\epsilon e_M)<0={\mathcal{T}}(0)$,
which implies that the trivial function is not a minimizer.

This says that the minimizer does not vanish identically,
and so it is positive in~$\Omega_1\cup\Omega_2$,
in light of Lemma~\ref{MP:TP}.
This completes the proof of Theorem~\ref{COP:TP}.
\end{proof}

\section*{Acknowledgements}
Part of this work was carried out while Serena Dipierro
and Enrico Valdinoci were visiting the {\it Department of Mathematics}
and the {\it Institute for Computational Engineering and Sciences} 
of the {\it University of Texas at Austin}, which
they wish to thank for the support and warm hospitality. 

This work has been supported by NSF grant DMS-1160802, 
the Alexander von Humboldt Foundation, the
ERC grant 277749 {\it E.P.S.I.L.O.N.} ``Elliptic
Pde's and Symmetry of Interfaces and Layers for Odd Nonlinearities'',
and the PRIN grant 201274FYK7
``Aspetti variazionali e
perturbativi nei problemi differenziali nonlineari''.

\bibliographystyle{alpha}
\newcommand{\etalchar}[1]{$^{#1}$}

\end{document}